\newtheorem{theorem}{Theorem}
\newtheorem{condition}{Condition}
\newtheorem{lemma}[theorem]{Lemma}
\newtheorem{proposition}[theorem]{Proposition}
\theoremstyle{remark}
\numberwithin{equation}{section}
\numberwithin{theorem}{section}
\DeclareMathOperator\symdif{\triangle}
\newcommand{\expec}{\operatorname{\mathbb{E}}}
\newcommand{\prob}{\operatorname{\mathbb{P}}}
\newcommand{\cov}{\operatorname{cov}}
\newcommand{\GP}{\operatorname{GP}}
\newcommand{\var}{\operatorname{var}}
\renewcommand{\P}{\operatorname{P}}
\renewcommand{\mathbf}{\boldsymbol}
\newcommand{\Real}{\mathbb R}
\newcommand{\reals}{\mathbb{R}}
\newcommand{\eps}{\varepsilon}
\newcommand{\eqd}{\overset{d}{=}}
\newcommand{\dto}{\xrightarrow{d}}
\newcommand{\td}{\dto}
\newcommand{\pto}{\overset{p}{\longrightarrow}}
\newcommand{\1}{\operatorname{\mathds{1}}}
\newcommand{\diff}{\mathrm{d}}
\newcommand{\oh}{\operatorname{o}}
\newcommand{\Oh}{\operatorname{O}}
\newcommand{\Km}{\hat{K}_{m}}
\definecolor{myorange}{rgb}{0.83,0.37,0.10}
\definecolor{darkteal}{rgb}{0,0.35,0.35}
\newcommand{\KP}{K_{\mathrm{P}}}
\newcommand{\KPm}{K_{\mathrm{P},m}}
\newcommand{\UP}{U_{n,\mathrm{P}}^m}
\newcommand{\UPdrie}{U_{n,\mathrm{P}}^3}
\begin{document}

\begin{frontmatter}
	
	\title{Tail inference using extreme U-statistics}
	\runtitle{Extreme U-Statistics}
	\dedicated{Dedicated to the memory of James Pickands III (1931--2022)}
	
	
	\author{\fnms{Jochem} \snm{Oorschot}\ead[label=e1]{oorschot@ese.eur.nl}}
	\address{Econometric Institute, Erasmus University Rotterdam, 3000 DR Rotterdam, The Netherlands. \printead{e1}}
	\author{\fnms{Johan} \snm{Segers}\ead[label=e2]{johan.segers@uclouvain.be}}
	\address{LIDAM/ISBA, UCLouvain, Voie du Roman Pays 20,
		B-1348 Louvain-la-Neuve, Belgium. \printead{e2}}
	\author{\fnms{Chen} \snm{Zhou}\ead[label=e3]{zhou@ese.eur.nl}}
	\address{Econometric Institute, Erasmus University Rotterdam, 3000 DR Rotterdam, The Netherlands. \printead{e3}}

	\runauthor{Oorschot, Segers and Zhou}
	
	\begin{abstract}
		Extreme U-statistics arise when the kernel of a U-statistic has a high degree but depends only on its arguments through a small number of top order statistics. As the kernel degree of the U-statistic grows to infinity with the sample size, estimators built out of such statistics form an intermediate family in between those constructed in the block maxima and peaks-over-threshold frameworks in extreme value analysis. The asymptotic normality of extreme U-statistics based on location-scale invariant kernels is established. Although the asymptotic variance coincides with the one of the H\'{a}jek projection, the proof goes beyond considering the first term in Hoeffding's variance decomposition.
		We propose a kernel depending on the three highest order statistics leading to 
		a location-scale invariant estimator of the extreme value index resembling the Pickands estimator. This \emph{extreme Pickands U-estimator} is asymptotically normal and its finite-sample performance is competitive with that of the pseudo-maximum likelihood estimator.
	\end{abstract}
	
	
	\begin{keyword}
		\kwd{U-statistic}
		\kwd{Generalized Pareto distribution}
		\kwd{H\'{a}jek projection}
		\kwd{Extreme value index}
	\end{keyword}
	
	
\end{frontmatter}

\tableofcontents

\section{Introduction}
Let $X_1, \ldots, X_n$ be independent and identically distributed random variables drawn from a common distribution $F$. Consider the generalized average
\begin{equation}
	\label{eq:Unm}
	U^{m}_n= \binom{n}{m}^{-1}  \sum_{I \subset [n], |I| = m} K_{m}(X_I),
\end{equation}
where $K_m: \Real^m\to\Real$ with $1\leq m\leq n$ is a permutation invariant function called kernel,  $[n] := \{1,\ldots,n\}$ and $X_I$ is the vector $(X_i)_{i \in I}$ indexed by the subset $I$. The generalized average $U_n^m$ is called a U-statistic and is an estimator of its corresponding ``kernel parameter'' $\theta_m$,
\[ 
\theta_m 
= \expec [ U_{n}^m ]
= \expec [ K_m(X_1,\ldots, X_m) ].
\]  
By efficiently exploiting the information in the sample, U-statistics corresponding to kernels that are not linked to a particular parametric model can attain uniformly minimum variance among all unbiased estimators of $\theta_m$,  see \cite{halmos1946theory} for the earliest theoretical development and \cite{lee1990Ustat},  \cite[Chapter~5]{serfling2009approximation} or \cite[Chapter~12]{van2000asymptotic} for an overview of U-statistics.

In this work we adapt the theory of U-statistics to the setting of extreme value analysis, which is concerned with the tail of a distribution function. If the block size $m$ is held fixed, as in the classic setup of U-statistics, there are many blocks containing observations that cannot be considered as part of the tail of the distribution. Therefore, we require that the block size $m$ tends to infinity as $n \to \infty$ and we let the kernel function $K_m$ depend only on the top-$q$ observations in a block of $m$ observations, i.e., there is $K: \{ (x_1,\ldots,x_q) \in \reals^q : x_1 \ge \ldots \ge x_q \} \to \reals$ satisfying
\begin{equation}
	\label{eq:KmK}
	K_{m}(X_1,\ldots, X_m)=K( X_{m:m}, \ldots, X_{m-q+1:m}),
\end{equation}
where $X_{m:m } \geq X_{m-1:m} \geq \ldots \geq X_{1:m}$ denote the order statistics of $X_1, \dots, X_m$.
Note that while $m$ grows, the number of upper order statistics $q$ and the kernel $K$ on $\reals^q$ remain fixed. U-statistics corresponding to kernel functions as in \eqref{eq:KmK} where $m:=m(n)\to \infty$ and $m=\oh(n)$ as $n \to \infty$ are called \emph{extreme U-statistics}.
The likelihood equation for the all-block maxima estimator in \citep[eq.~(2.3)]{oorschot2020all} can be seen as a particular instance of an extreme U-statistic with $q=1$.

Following usual practice in extreme value analysis, we assume that the observations are drawn from a common distribution function $F$ in the domain of attraction (DoA) of a Generalized Extreme Value (GEV) distribution with parameter $\gamma \in \Real$. For such $F$, there exist sequences $a_n$ and $b_n$ such that
\begin{align}\label{DoA}
	\lim_{n \to \infty } F^{n}(a_nx+b_n)=G_{\gamma}(x)=\exp(-(1+\gamma x)^{-1/\gamma})
\end{align}
for $x \in \Real$ such that $1+\gamma x > 0$; notation $F \in \mathcal{D}(G_\gamma)$.
The parameter $\gamma$ is called the extreme value index and governs the tail behaviour of $F$. We will show that extreme U-statistics can be valid estimators of $\lim_{m \to \infty} \theta_m=\mu(\gamma)$ for some function $\mu$. If the function $\mu$ is invertible, we can use the extreme U-statistic to construct estimators of $\gamma$. 

Extreme U-statistics corresponding to location-scale invariant kernels were first explored in \cite{segers2001}. It was shown there that under suitable conditions the extreme U-statistic is consistent for $\mu(\gamma)$. In this work, we study the asymptotic distribution of extreme U-statistics and we will show that, like classic U-statistics \citep{hoeffding1948class}, they are asymptotically normally distributed.
The theory is subdivided into two parts: in Section~\ref{sec:GP}, to focus on the asymptotic variance, we treat the case of an independent random sample drawn from the Generalized Pareto (GP) distribution. In Section~\ref{DoA_bias}, our main result, Theorem~\ref{Main-XU-result}, handles random samples drawn from a distribution $F$ in the DoA of a GEV distribution, in which case an asymptotic bias term arises.
In Theorem~\ref{Prop:bias}, we fix $q=3$, the minimum number of top order statistics required to obtain a location-scale invariant kernel, and we provide easily verifiable kernel integrability conditions under which Theorem~\ref{Main-XU-result} applies. 

As an application of Theorem~\ref{Prop:bias}, we consider in Section~\ref{Application} a location-scale invariant kernel $\KP:\reals^3 \to \reals$ that we call the \emph{Pickands kernel} in view of its resemblance to the Pickands estimator \citep{pickands1975statistical}.
The corresponding extreme U-statistic $\UP$ is called the \emph{extreme U-Pickands estimator} and is shown to be an asymptotically normal estimator of~$\gamma$. 
In a simulation study, we find that $\UP$ is competitive with the 
Maximum Likelihood (ML) estimator 
in the peaks-over-threshold approach based on the Generalized Pareto (GP) distribution~\citep{davison1990models}.

A technical challenge for extreme-U statistics is that their asymptotic distribution cannot be obtained in the same way as for classical U-statistics. A new approach is needed, which constitutes our main theoretical contribution. Recall that, classically, one truncates the variance of the U-statistic to the first term of the Hoeffding variance decomposition \citep[equation~(5.12)]{hoeffding1948class} and exploits that the U-statistic's H\'ajek projection has the same variance. In contrast, 
in Proposition~\ref{Prop:variance}
we need to take into account more than the first term of the Hoeffding decomposition to deal with the asymptotic variance of the extreme U-statistic.

Estimators derived from extreme U-statistics are more efficient in the mean square error sense than estimators based on other block techniques in extreme value analysis. In the context of extreme value analysis, block techniques are often used to extract large observations, for example, in the classical block maxima approach, which has received renewed interest in, for instance, \citep{dombry2019maximum, ferreira2015block}. In addition, recent studies have proposed estimators based on the sliding block technique \citep{buecher2018, robert2009sliding}. To make the comparison, note that $U^{m}_{n} = \expec[ K_m(X_1,\ldots,X_m) \mid X_{1:n},\ldots,X_{n:n}]$, see for instance \cite[p.~161]{van2000asymptotic}. This means that the conditional expectation of disjoint and sliding block statistics given the order statistics is equal to $U^{m}_{n}$, while, by Jensen's inequality for conditional expectations, the variance of $U_n^m$ is not higher.

In Section~\ref{sec:GP}, we develop the asymptotic theory of extreme U-statistics based on samples of the GP distribution. The extension to samples from a distribution in the DoA of some GEV distribution is treated in Section~\ref{DoA_bias}. The Pickands kernel leading to a novel estimator of the extreme value index is constructed in Section~\ref{Application}, the finite-sample performance of which is reported in Section~\ref{simulation}. After a brief discussion in Section~\ref{sec:disc}, the proofs of the results are given in a series of appendices.

\section{Asymptotic theory: sample drawn from the GP distribution}
\label{sec:GP}
\subsection{Conditions and theorem}

In Theorem~\ref{main_GP_thm} we state the asymptotic normality of the extreme U-statistic 
\begin{equation}
	\label{U-stat_GP}
	U^{m}_{n, Z} = \binom{n}{m}^{-1}  \sum_{I \subset [n], |I| = m} K_{m}(Z_I),
\end{equation}
for $K_m$ as in \eqref{eq:KmK} and for
independent random variables $Z_1,\ldots,Z_n$ drawn from the $\GP(\gamma)$ distribution, i.e., $\P(Z_i \le z) = 1 - (1+\gamma z)^{-1/\gamma}$ for $z \ge 0$ such that $1 + \gamma z > 0$, with the limiting interpretation $\P(Z_i \le z) = 1 - \exp(-z)$ if $\gamma = 0$. Usually, the GP family also includes a location and scale parameter, but these will play no role here since the kernel $K$ will be assumed to be location-scale invariant.
We start by listing the required conditions on the kernel $K$ and the degree sequence $m:=m_n$.

Condition~\ref{Condition_kernel_scale-loc_integrability} is an integrability condition on the kernel $K$ to be checked by analysis. Condition~\ref{degree_sequence} provides upper and lower bounds for the block sizes $m$. Both are rather weak and are satisfied for $m$ of the order $n^\kappa$ for any $0 < \kappa < 1$.

\begin{condition}
	\label{Condition_kernel_scale-loc_integrability} 
	The kernel $K$ in \eqref{eq:KmK} is location-scale invariant and not everywhere constant, implying $q \geq 3$. Moreover, for all $p>0$ it holds that
	\[
	\expec\bigl[ \bigl| 
	K\bigl((Z_{q-j:q-1})_{j=1}^{q}\bigr)
	\bigr|^{p} \bigr] 
	< \infty,
	\]
	for independent $\GP(\gamma)$ random variables $Z_1,\ldots,Z_{q-1}$ and $Z_{0:q-1}:=0$.
\end{condition}

\begin{condition}
	\label{degree_sequence}
	The degree sequence $m:=m_n$ satisfies $m=\oh(n^{1-\epsilon})$ and $\ln(n)=\oh(m^{1-\delta})$ as $n \to \infty$ for some $\epsilon, \delta>0$.
\end{condition}

Because of the threshold-stability property of the GP distribution, the value of $\theta_m = \expec[ K_m(Z_1,\ldots,Z_m) ]$ does not depend on $m \ge q$, see \eqref{def: expectation}; we write $\theta = \theta_m$ in the remainder of this section.

\begin{theorem} 
	\label{main_GP_thm}
	If Conditions \ref{Condition_kernel_scale-loc_integrability}--\ref{degree_sequence} are satisfied, then the extreme U-statistic $U_{n, Z}^{m}$ in~\eqref{U-stat_GP} satisfies
	\[
	\sqrt{k} \left( U_{n, Z}^m- \theta  \right) 
	\dto N \left(0, \sigma^2_{K}(\gamma) \right),
	\qquad n \to \infty,
	\]
	where $k = n / m$ and $\sigma^2_{K}(\gamma)$ depends only on $K$ and $\gamma$ and is defined in~\eqref{eq:sigmaK} below.
\end{theorem}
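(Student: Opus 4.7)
The argument follows the Hoeffding/H\'ajek scheme for U-statistics, adapted to the growing-block regime $m\to\infty$. Define the Hoeffding conditional means $g_c(z_1,\ldots,z_c) = \expec[K_m(Z_1,\ldots,Z_m) \mid Z_1=z_1,\ldots,Z_c=z_c]$, their variances $\zeta_c = \Var(g_c(Z_1,\ldots,Z_c))$, and the centred canonical kernels $h_c$ with $\tau_c = \expec[h_c(Z_1,\ldots,Z_c)^2]$. The H\'ajek projection of $U^m_{n,Z}$ onto the sum of the one-dimensional spaces $L^2(\sigma(Z_i))$ is the i.i.d.\ sum
$$\hat U_n = \theta + \frac{m}{n}\sum_{i=1}^n\bigl(g_1(Z_i) - \theta\bigr),\qquad \Var(\hat U_n) = \frac{m^2\zeta_1}{n}.$$
Using the threshold stability of the GP distribution together with the location-scale invariance of $K$, the function $g_1$ admits a representation as an expectation of a functional of a fixed number of independent GP$(\gamma)$ variables, which gives $m\zeta_1 \to \sigma^2_K(\gamma)$; this is what \eqref{eq:sigmaK} records. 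Condition~\ref{Condition_kernel_scale-loc_integrability} combined with Jensen's inequality produces every $L^p$-norm of $g_1(Z_1)$ uniformly in $m$, and Lyapunov's CLT for triangular arrays of i.i.d.\ summands then delivers $\sqrt{k}(\hat U_n-\theta)\dto N(0,\sigma^2_K(\gamma))$.

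It remains to show $\sqrt{k}(U^m_{n,Z}-\hat U_n)\tp 0$. Orthogonality of the Hoeffding decomposition gives
$$\Var(U^m_{n,Z}-\hat U_n) = \sum_{c\geq 2}\frac{\binom{m}{c}^2}{\binom{n}{c}}\tau_c,$$
and from the elementary estimate $\binom{m}{c}^2/\binom{n}{c}\leq (2m^2/n)^c/c!$ it suffices to prove $\sum_{c\geq 2}(2m^2/n)^c\tau_c/c! = \oh(m/n)$. This is precisely the content of Proposition~\ref{Prop:variance}, and it is what is meant in the introduction by ``going beyond the first term of Hoeffding's variance decomposition'': in the present regime the $c=1$ $\zeta$-term is not asymptotically equal to $\Var(\hat U_n)$, so the discrepancy has to be reabsorbed from higher-order contributions. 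The crucial ingredient would be a uniform bound of the form $\tau_c\leq C^c/m^c$, with $C$ independent of $c$ and $m$, reflecting the fact that $h_c$ is driven by the rare event that at least one of $Z_1,\ldots,Z_c$ ranks among the top~$q$ of the block of size~$m$, whose probability is $\Oh(m^{-c})$. Such a bound makes the sum above of order $\sum_{c\geq 2}(Cm/n)^c/c! = \Oh((m/n)^2) = \Oh(1/k^2)$, which is $\oh(1/k)$ since $k\to\infty$ by Condition~\ref{degree_sequence}.

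The main obstacle is the uniform decay estimate $\tau_c\leq C^c/m^c$. My plan is to expand $h_c$ in terms of $g_c$ and the lower-order $g_{c'}$ via inclusion--exclusion and then decompose according to the joint configuration of ranks of $Z_1,\ldots,Z_c$ within the block of $m$: configurations in which none of these observations lies among the top~$q$ contribute zero to $g_c-\theta$, while each of the remaining $\Oh(q^c)$ ``hit'' configurations occurs with hypergeometric probability of order $m^{-c}$. Conditional on any such configuration, threshold stability of the GP distribution rewrites $K_m$ as a location-scale functional of a fixed number of independent GP excesses whose $L^p$-norms are finite by Condition~\ref{Condition_kernel_scale-loc_integrability}, yielding the required $C^c$ factor. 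The upper bound $m=\oh(n^{1-\eps})$ in Condition~\ref{degree_sequence} ensures $k\to\infty$ and $Cm/n\to 0$ so the geometric-type series converges, while the lower bound $\ln n=\oh(m^{1-\delta})$ is used to absorb logarithmic remainders when turning polynomial moment bounds into the constant~$C$. Once this variance bound is in place, Chebyshev's inequality yields $\sqrt{k}(U^m_{n,Z}-\hat U_n)\tp 0$, and Slutsky's lemma combined with the CLT for $\hat U_n$ completes the proof.
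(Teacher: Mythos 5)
Your overall architecture matches the paper's: H\'ajek projection, a Lyapunov CLT for it, and a proof that $\Var\bigl(U^m_{n,Z}-\hat U_n\bigr)=\oh(1/k)$. Where you diverge is in how that remainder variance is controlled. The paper expands $\var U^m_{n,Z}=\sum_{\ell}p_{n,m}(\ell)\zeta_{m,\ell}$ in terms of the \emph{non-canonical} covariances $\zeta_{m,\ell}$ weighted by the hypergeometric pmf $p_{n,m}$, proves the approximation $\zeta_{m,\ell}\approx\ell\,\zeta_{m,1}$ only for overlaps $\ell\leq\ell_n=\oh(m)$, and kills the remaining tail $\ell>\ell_n$ by Hoeffding's concentration inequality for the hypergeometric distribution. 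You instead propose the canonical expansion $\Var(U-\hat U)=\sum_{c\geq2}\binom{m}{c}^2\binom{n}{c}^{-1}\tau_c$ together with a single uniform bound $\tau_c\leq C^c/m^c$, which would close the argument with a clean convergent series and no truncation. That is a genuinely different route, and if the bound held it would be tidier.

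The gap is that the bound $\tau_c\leq C^c/m^c$ is neither proved nor obviously attainable. It does not follow from the estimates the paper actually establishes: since $\tau_c=\sum_{j\leq c}(-1)^{c-j}\binom{c}{j}\zeta_{m,j}$ and $\sum_{j}(-1)^{c-j}\binom{c}{j}j=0$ for $c\geq2$, one has $\tau_c=\sum_{j}(-1)^{c-j}\binom{c}{j}\bigl(\zeta_{m,j}-j\zeta_{m,1}\bigr)$, and inserting the paper's bound $\abs{\zeta_{m,j}-j\zeta_{m,1}}\lesssim j^{2-\eps}m^{-2+\eps}$ from Lemma~\ref{subgoal_3} (valid only for $j\leq\ell_n$, not for all $j\leq m$) yields $\tau_c\lesssim c^2 2^c m^{-2+\eps}$ — an exponential-in-$c$ blow-up with no improvement in the $m$-decay as $c$ grows, which defeats your geometric series. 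A direct argument for $\tau_c$ would be needed, and your heuristic — that $h_c$ is driven by the probability-$\Oh(m^{-c})$ event that all $c$ arguments rank among the top $q$ of the block — is suggestive but not a proof: $h_c$ is an alternating sum of conditional expectations $g_j$, not an indicator, so no clean event-based pointwise bound is available, and the degeneracy constraints $\expec[h_c(z_1,\dots,z_{c-1},Z_c)]\equiv0$ do not by themselves justify the rank-configuration decomposition you invoke. Moreover, under Condition~\ref{Condition_kernel_scale-loc_integrability} the kernel need not be bounded, so even on the favourable event H\"older's inequality costs an exponent $1-2/a<1$, and the best one could realistically hope for is $\tau_c\lesssim C^c m^{-c(1-\eps)}$; as you note, the slack $m=\oh(n^{1-\eps})$ in Condition~\ref{degree_sequence} can absorb such a loss, but that requires a more delicate interplay between the exponent defect and the combinatorial weights than the one-line geometric series you wrote. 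The paper sidesteps all of this by staying with the $\zeta_{m,\ell}$, truncating at $\ell_n$, and only needing the approximation for small overlaps; if you pursue the $\tau_c$ route you will effectively need to reprove an analogue of Lemmas~\ref{lem:N0GPD}--\ref{lem:EBmlm1mlm} in a form sharp enough in both $c$ and $m$.
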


\subsection{Background theory and sketch of the proof of Theorem~\ref{main_GP_thm}}

We start with a sketch of the derivation of the asymptotic distribution of a U-statistic in the classical case where $m$ is fixed \citep{serfling2009approximation,van2000asymptotic}, which will provide guidance in the case where $m \to \infty$. 
It will turn out that the same proof technique can be applied to the case where $m \to \infty$ but $m^2 = \oh(n)$ as $n \to \infty$, leading to the asymptotic distribution of the extreme U-statistics. By contrast, if $m^2 = \oh(n)$ does not hold, a novel approach is required: in the Hoeffding variance decomposition, a growing number of terms needs to be taken into account.
The reader only interested in the main results can skip this subsection and jump immediately to Section~\ref{DoA_bias}.

\paragraph{Classical theory for fixed $m$.}

By Hoeffding's decomposition,
\begin{equation}
	\label{eq:Hoeffding}
	\var U_{n,Z}^m = \sum_{\ell=1}^m p_{n,m}(\ell) \, \zeta_{m,\ell}
	\quad \text{with } 
	\left\{
	\begin{array}{r@{\;=\;}l}
		p_{n,m}(\ell) & \displaystyle \binom{n}{m}^{-1} \binom{m}{\ell}\binom{n-m}{m-\ell}, \\[1em]
		\zeta_{m,\ell} & \cov \bigl( K_m(X_S), K_m(X_{S'}) \bigr),
	\end{array}
	\right.
\end{equation}
where $S$ and $S'$ are sets of positive integers with $|S| = |S'| = m$ and $|S \cap S'| = \ell$.
For fixed $m$, we get from \eqref{eq:Hoeffding} the asymptotic relation
\begin{align}\label{combinatory_weight}
	\frac{p_{n,m}(\ell+1)}{ p_{n,m}(\ell)}=\Oh \left(n^{-1} \right), \qquad \text{ as } n \to \infty.
\end{align} Provided $\zeta_{m, 1}\neq 0$, it follows that the first term of Hoeffding's decomposition, $p_{n, m}(1)\zeta_{m, 1}$, asymptotically dominates the value of the sum $ \sum_{\ell=1}^m p_{n,m}(\ell) \zeta_{m,\ell}$. For $p_{n,m}(1)$, we find
\begin{equation}
	p_{n,m}(1)	= \frac{m! (n-m)!}{n!} \cdot m \cdot \frac{(n-m)!}{(n-2m+1)! (m-1)!} 
	= \frac{m^2}{n} a_{m,n} \label{first_weight_p}
\end{equation}
where
\[
a_{m,n} 
= \frac{(n-m)!}{(n-1)!} \frac{(n-m)!}{(n-2m+1)!}
= \frac{(n-m) (n-m-1) \cdots (n-2m+2)}{(n-1)(n-2) \cdots (n-m+1)}.
\]
Because the kernel degree $m$ is fixed, we have $a_{m,n} \to 1$ as $n \to \infty$. It follows that
\begin{align}\label{var_approximation}
	\var U_{n,Z}^m \sim \left(m^2/n \right) \zeta_{m, 1}, \qquad \text{ as } n \to \infty,
\end{align}
where $x_n \sim y_n$ signifies that the ratio $x_n /y_n$ converges to $1$ as $n \to \infty$.

Recall that $\theta=\expec[ K_m(Z_1,\ldots, Z_m) ]$. 
Define the H\'ajek projection of $U_{n, Z}^m - \theta$ as
\begin{align}
	\hat{U}_{n, Z}^m
	&:= \sum_{j=1}^{n}  \sum_{I \subset [n], |I| = m}  \binom{n}{m}^{-1}   \expec \left[   K_m(Z_I)  -\theta \mid Z_j  \right] \nonumber \\
	&= \sum_{j=1}^{n}  \binom{n-1}{m-1} \binom{n}{m}^{-1}    \Km(Z_{j}) 
	= \frac{1}{k} \sum_{j=1}^{n} \Km(Z_{j})
	\label{eq:Hajek}
\end{align}
where $k = n/m$ and
\[
\Km(x)= \expec \left[  K_m(Z_1, \ldots, Z_{m-1}, x) \right] -\theta.
\]
Because 
the asymptotic variance of ${U}_{n, Z}^m$, i.e., $(m^2/n) \zeta_{m, 1}$, is 
equal to the variance of $\hat{U}_{n, Z}^m$, the asymptotic distribution of $U_{n, Z}^m-\theta$ is the same as that of $\hat{U}_{n, Z}^m$, i.e., as $n \to \infty$, $$\frac{\hat{U}_{n, Z}^m}{\sqrt {\var \hat{U}_{n,Z}^m}}-\frac{U_{n, Z}^m-\theta}{\sqrt{\var U_{n,Z}^m}} \overset{p}{\to} 0.$$ The H\'ajek projection $ \hat{U}_{n,Z}^m$ is asymptotically normal by the central limit theorem, and thus 
\begin{align*}
	\frac{k}{\sqrt{n \zeta_{m,1}}} \left( U_{n, Z}^m-\theta \right) \td N(0, 1), \qquad n \to \infty,
\end{align*}
provided that the kernel is square-integrable and $\zeta_{m, 1}>0$. For more details, see the proof of Theorem~12.3 in \cite{van2000asymptotic}. Note again that we assumed that $m$ is fixed.

\paragraph{Letting $m \to \infty$: first attempt.}

We aim to adapt the above proof to an extreme U-statistic, that is, when $m \to \infty$ in such a way that $m=\oh(n)$ as $n \to \infty$. Such an adaptation is non-trivial because $\zeta_{m, 1}$ varies as $m\to\infty$ and the combinatorial numbers $p_{n, m}(\ell)$ no longer satisfy equation~\eqref{combinatory_weight} in general, which makes it difficult to establish the key variance approximation in \eqref{var_approximation}. 

If we consider again the first term $p_{n,m}(1)$ in \eqref{first_weight_p}, we need a further restriction on the range of $m$ to ensure $a_{m,n}\to 1$ as $n\to\infty$. Clearly,
\begin{equation}
	\label{eq:amn1}
	\left( 1 - \frac{m-1}{n-m+1} \right)^{m-1}
	\le a_{m,n}
	\le \left(1 - \frac{m-1}{n-1}\right)^{m-1}.
\end{equation}
The lower and upper bound tend to $1$ if and only if $m^2 = \oh(n)$ as $n \to \infty$. The condition $m^2 = \oh(n)$ is also necessary to ensure that $p_{n,m}(1)$ is the dominant term in the sum $ \sum_{\ell=1}^m p_{n,m}(\ell)$ as $n \to \infty$. Taken together, these facts lead to the variance approximation~\eqref{var_approximation} but only under the additional restriction $m^2 = \oh(n)$ as $n \to \infty$.

\paragraph{Case $m \to \infty$: novel approach.}

We overcome the restriction $m^2=\oh(n)$ and obtain the desired asymptotic equivalence relation in \eqref{var_approximation} by further analyzing the subsequent terms $\zeta_{m,\ell}$ for $\ell>1$ in Hoeffding's decomposition~\eqref{eq:Hoeffding}. For integers $a$ and $b$ such that $a \le b$ write $[a{:}b] = [a, b] \cap \mathbb{Z} = \{a, \ldots, b\}$.

Without loss of generality, let the two blocks in \eqref{eq:Hoeffding} be $S = [1{:}m]$ and $S' = [(m-\ell+1){:}(2m-\ell)]$. Their overlap $S \cap S' = [(m-\ell+1){:}m]$ contains exactly $\ell$ elements.
It follows that, for $\ell \in [1{:}m]$,
\[
\zeta_{m,\ell} = \expec \left[ 
K_m(Z_{[1:m]}) \,
K_m(Z_{[(m-\ell+1):(2m-\ell)]})
\right] - \theta^2.
\]

The dependence between $K_m(Z_S)$ and $K_m(Z_{S'})$ stems from observations $Z_i$ with $i \in S \cap S'$ in the top-$q$ of both $Z_S$ and $Z_{S'}$. If $\ell$ is small compared to $m$, then usually there will be at most one such observation. We can thus expect that $\zeta_{m,\ell}$ can be approximated by $\ell \zeta_{m,1}$.
Recognizing the term $p_{n,m}(\ell)$ as the probability mass function of a hypergeometric random variable, we use the expectation formula to find
\[
\sum_{\ell=1}^m p_{n,m}(\ell) \ell \zeta_{m,1} 
= \frac{m^2}{n} \zeta_{m,1} 
= \frac{m \zeta_{m,1}}{k}.
\]
We obtain the bound
\begin{equation}
	\label{eq:approxvarUnm}
	\left| \var U_{n, Z}^{m} - \frac{m \zeta_{m,1}}{k} \right|
	\le
	\sum_{\ell=1}^m p_{n,m}(\ell) 
	\left| \zeta_{m,\ell} - \ell \zeta_{m,1} \right|.
\end{equation}

In order to formally derive \eqref{var_approximation} as well as the asymptotic distribution of the extreme U-statistic $U^m_{n, Z}$, we show three limit relations. First, we show that the bound in \eqref{eq:approxvarUnm} is $\oh(1/k)$ as $n \to \infty$, that is,
\begin{equation}
	\label{eq:wish}
	\lim_{n \to \infty} \left| k \var U_{n, Z}^{m} - m \zeta_{m,1} \right| = 0.
\end{equation}
Second, we show that
\begin{equation}
	\label{eq:limmzetam1}
	\lim_{m \to \infty} m \zeta_{m,1} = \sigma^2 > 0.
\end{equation}
Note then that \eqref{eq:wish} and \eqref{eq:limmzetam1} imply \eqref{var_approximation} as well as
\[
\frac{\var U_{n, Z}^m}{\var \hat{U}_{n, Z}^m}
= \frac{\var U_{n, Z}^m}{(n/k^2) \zeta_{m,1}}
= \frac{k \var U_{n, Z}^m}{m \zeta_{m,1}}
\to 1, \qquad \text{ as } n \to \infty.
\]
Finally, we show the asymptotic normality of the H\'ajek projection $\hat{U}_{n, Z}^m$, which, in the extreme value setting, is a centered and reduced row sum of a triangular array of row-wise independent and identically distributed random variables.

Limit relation~\eqref{eq:wish} is the content of Proposition~\ref{Prop:variance}. Limit relation~\eqref{eq:limmzetam1} and the asymptotic normality of the H\'ajek projection $\hat{U}_{n, Z}^m$ are the content of Proposition~\ref{Prop:Hajek_distribution}. Together, Propositions~\ref{Prop:variance}~and~\ref{Prop:Hajek_distribution} lead to the main result of this section in Theorem \ref{main_GP_thm}.

\begin{proposition}\label{Prop:variance}
	If Conditions \ref{Condition_kernel_scale-loc_integrability} and \ref{degree_sequence} are satisfied, we have
	\[
	\left| \var U_{n,Z}^{m} - \frac{m}{k} \zeta_{m,1} \right|=
	\oh(1/k), \qquad n \to \infty.
	\]
\end{proposition}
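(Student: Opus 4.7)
The plan is to use Hoeffding's orthogonal ANOVA decomposition of the kernel $K_m$ to rewrite the bound in \eqref{eq:approxvarUnm} in a form that telescopes with the leading term $m\zeta_{m,1}/k$. Let $g_c$, $c=1,\ldots,m$, be the completely degenerate Hoeffding components obtained by M\"obius inversion from the conditional expectations $h_c(x_1,\ldots,x_c) = \expec[K_m(Z_1,\ldots,Z_m) \mid Z_1=x_1,\ldots,Z_c=x_c]$, and set $\delta_c = \var g_c(Z_1,\ldots,Z_c)\ge 0$. Orthogonality of the $g_c$ gives $\zeta_{m,\ell} = \sum_{c=1}^{\ell} \binom{\ell}{c}\delta_c$; in particular $\zeta_{m,1}=\delta_1$. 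Combining this with the factorial moment identity $\sum_{\ell=c}^m p_{n,m}(\ell)\binom{\ell}{c} = \binom{m}{c}^2/\binom{n}{c}$ for the hypergeometric distribution (a short computation via Vandermonde's identity) recovers the classical formula $\var U_{n,Z}^m = \sum_{c=1}^m \delta_c\binom{m}{c}^2/\binom{n}{c}$, and subtracting the $c=1$ term turns the difference into
\[
\var U_{n,Z}^m - \frac{m}{k}\zeta_{m,1} = \sum_{c=2}^m \delta_c\,\frac{\binom{m}{c}^2}{\binom{n}{c}} \ge 0,
\]
so that the absolute value drops out.

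The key structural fact, and what I expect to be the main obstacle, is a uniform-in-$m$ bound on the total variance $\zeta_{m,m} = \var K_m(Z_{[1:m]}) = \sum_{c=1}^m \binom{m}{c}\delta_c$. I would show that $\zeta_{m,m}$ is in fact \emph{constant} in $m$ for all $m\ge q$: by the threshold stability of the GP distribution, conditional on the $q$-th largest observation $V=Z_{m-q+1:m}$ the rescaled upper exceedances $(Z_{m-i+1:m}-V)/(1+\gamma V)$, $i=1,\ldots,q-1$, are distributed as the order statistics of $q-1$ iid $\GP(\gamma)$ variables; location-scale invariance of $K$ then absorbs both the shift by $V$ and the scaling by $1+\gamma V$, so that $K_m(Z_{[1:m]})$ has the same distribution as $K((Z_{q-j:q-1})_{j=1}^q)$ with $Z_1,\ldots,Z_{q-1}$ iid $\GP(\gamma)$ and $Z_{0:q-1}:=0$. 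Its variance is finite by Condition~\ref{Condition_kernel_scale-loc_integrability}; call it $C$.

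The remainder is elementary. Using the product representation $\binom{m}{c}/\binom{n}{c} = \prod_{i=0}^{c-1}(m-i)/(n-i)\le (m/n)^c$ for $c\le m\le n$, and factoring out two copies of $m/n$ from each $c\ge 2$ term,
\[
\sum_{c=2}^m \delta_c\,\frac{\binom{m}{c}^2}{\binom{n}{c}} \le \sum_{c=2}^m \delta_c\binom{m}{c}\Bigl(\frac{m}{n}\Bigr)^c \le \Bigl(\frac{m}{n}\Bigr)^2 \sum_{c=2}^m \delta_c\binom{m}{c} \le \frac{C}{k^2}.
\]
Since Condition~\ref{degree_sequence} implies $k = n/m \to \infty$, this yields the sharper rate $\Oh(1/k^2) = \oh(1/k)$. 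The non-trivial work thus sits entirely in the $m$-independence of $\zeta_{m,m}$; the combinatorial step is crude and loses a full factor of $1/k$, yet this still suffices because two copies of $m/n$ can be extracted from every $c\ge 2$ term.
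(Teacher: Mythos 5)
Your proof is correct, and it is genuinely different from---and considerably shorter than---the paper's. The paper works directly with the covariances $\zeta_{m,\ell}$ and their hypergeometric weights $p_{n,m}(\ell)$: it splits the sum at a carefully chosen $\ell_n$, shows $\zeta_{m,\ell}\approx \ell\zeta_{m,1}$ uniformly for $\ell\le\ell_n$ via a long chain of counting-variable lemmas (Lemmas~\ref{lem:N0GPD}--\ref{lem:EBmlm1mlm}), and controls the tail $\ell>\ell_n$ with a Hoeffding concentration bound for the hypergeometric law. You instead pass to the canonical (completely degenerate, orthogonal) ANOVA decomposition with nonnegative variances $\delta_c$, where the exact formula $\var U_{n,Z}^m=\sum_c\delta_c\binom{m}{c}^2/\binom{n}{c}$ makes the $c=1$ term \emph{identically} equal to $(m/k)\zeta_{m,1}$---no lower-order $a_{m,n}$ correction, hence no need to force $m^2=\oh(n)$. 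The remainder is a sum of nonnegative terms, and the crude bound $\binom{m}{c}^2/\binom{n}{c}\le\binom{m}{c}(m/n)^2$ for $c\ge2$, combined with the $m$-independence of $\zeta_{m,m}=\sum_c\binom{m}{c}\delta_c$ (which is exactly Lemma~\ref{lem:GPKm} of the paper), gives the stronger conclusion $0\le\var U_{n,Z}^m-(m/k)\zeta_{m,1}\le\zeta_{m,m}/k^2=\Oh(1/k^2)$. Both routes lean on the same structural input (threshold stability of the GP plus location-scale invariance of $K$), but you use it through the single scalar $\zeta_{m,m}$ rather than through estimates on near-diagonal collision probabilities of top order statistics. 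What your approach buys: a one-paragraph proof, a quantitative $1/k^2$ rate, and transparency about exactly where the GP structure enters. What the paper's approach buys: it does not need the nonnegativity of the $\delta_c$ nor the exact constancy of $\zeta_{m,m}$, only that moments of $K_m$ are controlled, so it is more robust to relaxations of the i.i.d.\ GP assumption (a point the authors raise in their discussion section). One small stylistic point: it is worth flagging explicitly that the $\delta_c$ depend on $m$ (since $g_c$ does), but this does not affect your argument because the telescoping identity $\sum_c\binom{m}{c}\delta_c=\zeta_{m,m}$ absorbs that dependence.
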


Let $\overline{K}_m = K_m - \theta$ denote the centered kernel. Recall that for integer $q \ge 1$, the Erlang($q$) distribution is the one of the sum $E_1+\cdots+E_q$ of $q$ independent unit exponential random variables.

\begin{proposition}
	\label{Prop:Hajek_distribution}
	If Condition~\ref{Condition_kernel_scale-loc_integrability} holds, then
	\begin{equation}
		\label{eq:sigmaK}
		\lim_{m \to \infty} m \zeta_{m, 1}=
		\int_0^\infty
		\left( \expec \left[
		\overline{K}_{q+1} \bigl(
		(Z_{q-j:q-1})_{j=1}^q, h_\gamma(S_q/x)
		\bigr)
		\right] \right)^2 \diff x 
		:=\sigma^2_{K}(\gamma),
	\end{equation}
	where $S_q$ is an $\operatorname{Erlang}(q)$ random variable, independent of the independent $\GP(\gamma)$ random variables $Z_1,\ldots,Z_{q-1}$, and where $Z_{0:q-1}:=0$.
	If, additionally, Condition~\ref{degree_sequence} holds, the H\'ajek projection $\hat{U}_{n, Z}^m$ in~\eqref{eq:Hajek} satisfies
	\begin{align}\label{Part2}
		\sqrt{k} \, \hat{U}_{n, Z}^m \dto N\left(0, \sigma^2_{K}(\gamma) \right), \qquad n \to \infty.
	\end{align}
\end{proposition}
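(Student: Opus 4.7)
The plan is to address the two claims separately: first establish the limit $m\zeta_{m,1}\to\sigma_K^2(\gamma)$ in~\eqref{eq:sigmaK}, then deduce the CLT for the H\'ajek projection.

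For the variance limit, I would begin by writing $\zeta_{m,1}=\expec[\Km(Z_1)^2]$ and applying the change of variable $Z_1=h_\gamma(m/x)$, under which $x=m\bar F(Z_1)$ is uniform on $(0,m)$ and
\[
m\zeta_{m,1}=\int_0^m \bigl[\Km(h_\gamma(m/x))\bigr]^2\diff x.
\]
The core of the proof is then pointwise convergence of the integrand together with an integrable majorant, so that dominated convergence delivers the result. For the pointwise limit I would combine three ingredients. First, as $m\to\infty$ with $x>0$ fixed, the rescaled top order statistics of $Z_1,\ldots,Z_{m-1}$ converge jointly to $(h_\gamma(1/\Gamma_j))_{j\ge 1}$ for a unit-rate Poisson process $(\Gamma_j)$, and the inserted value $h_\gamma(m/x)$ corresponds to an extra atom at position $x$ in this process. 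Second, the location-scale invariance of $K$ (Condition~\ref{Condition_kernel_scale-loc_integrability}) reduces the kernel value on the limiting top-$q$ of the combined sample to $K(h_\gamma(x/\Gamma_1),\ldots,h_\gamma(x/\Gamma_{q-1}),0)$ on the event $\{\Gamma_q>x\}$ and to $K(h_\gamma(\Gamma_q/\Gamma_1),\ldots,h_\gamma(\Gamma_q/\Gamma_{q-1}),0)$ on the event $\{\Gamma_q\le x\}$. Third, the well-known independence of $\Gamma_q$ from the ratios $(\Gamma_j/\Gamma_q)_{j=1}^{q-1}$ together with the R\'enyi-type identification of $h_\gamma$ of reciprocal uniform order statistics with iid $\GP(\gamma)$ order statistics rewrites the first expectation as $\expec[K(Z_{q-1:q-1},\ldots,Z_{1:q-1},h_\gamma(S_q/x))\1\{S_q>x\}]$ and collapses the second to $\theta\cdot\P(S_q\le x)$ via the identity $\expec[K(Z_{q:q},\ldots,Z_{1:q})]=\theta$; with $S_q\sim\operatorname{Erlang}(q)$ independent of the $Z_i$'s and the sign of $h_\gamma(S_q/x)$ acting as a bookkeeping device for which of $0$ or $h_\gamma(S_q/x)$ is dropped from the top-$q$ of the $q+1$ arguments, subtracting $\theta$ assembles the two contributions into $\expec[\overline K_{q+1}((Z_{q-j:q-1})_{j=1}^q,h_\gamma(S_q/x))]$. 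The integrable majorant for dominated convergence is constructed from the arbitrary-$p$ moment bound of Condition~\ref{Condition_kernel_scale-loc_integrability}.

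For the CLT, observe from~\eqref{eq:Hajek} that
\[
\sqrt k\,\hat U_{n,Z}^m=\frac{1}{\sqrt k}\sum_{j=1}^n \Km(Z_j)
\]
is a normalized sum of $n$ iid centered random variables whose common variance $\zeta_{m,1}$ satisfies $m\zeta_{m,1}\to\sigma_K^2(\gamma)$ by the first part. Since the summand distribution depends on $m$, I would invoke the Lyapunov CLT for triangular arrays. A parallel change-of-variable and dominated-convergence argument applied to $\expec[|\Km(Z_1)|^{2+\delta}]$ yields an $\Oh(1/m)$ bound for every $\delta>0$; substituting this into the Lyapunov ratio leaves a factor of order $(m/n)^{\delta/2}=k^{-\delta/2}$, which is $\oh(1)$ by Condition~\ref{degree_sequence}.

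The main obstacle will be the third ingredient in the first part: re-expressing the two-regime limit of $\Km(h_\gamma(m/x))$ in the compact form featuring $(Z_{q-j:q-1})_{j=1}^q$ and $h_\gamma(S_q/x)$. This unification relies on repeated use of location-scale invariance, on the independence of $\Gamma_q$ from its normalized predecessors, and on the identity $\theta=\expec[K(Z_{q:q},\ldots,Z_{1:q})]$ that eliminates the complement regime. Constructing a uniformly integrable envelope for dominated convergence---both near $x=0$, where the inserted point sits well above the limiting top-$q$, and near $x=\infty$, where it drops below---is also delicate and is where the arbitrary-$p$ moment assumption of Condition~\ref{Condition_kernel_scale-loc_integrability} plays an essential role beyond merely ensuring finiteness of $\sigma_K^2(\gamma)$.
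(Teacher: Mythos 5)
Your proposal is correct and follows essentially the same architecture as the paper: rewrite $m\zeta_{m,1}$ as an integral in a scale variable $x$ by conditioning on the shared observation and changing variables, identify the pointwise Erlang limit of the inner expectation, justify the interchange by dominated convergence using the all-moments hypothesis, and then invoke a Lyapunov CLT for the triangular array. Two technical differences are worth flagging, though neither is a gap. In Part 1, the paper sidesteps the Poisson-process convergence you invoke by using the exact finite-sample identity $(Y_{r-j+1:r})_{j=1}^q\eqd(Y_{q-j:q-1}/U_{q:r})_{j=1}^q$ together with location-scale invariance to obtain, for every $r\ge q$,
\[
r\zeta_{r+1,1}=\int_0^\infty\Bigl(\expec\bigl[\overline K_{q+1}\bigl((h_\gamma(Y_{q-j:q-1}))_{j=1}^q,h_\gamma(rU_{q:r}/x)\bigr)\1\{rU_{q:r}>x\}\bigr]\Bigr)^2\diff x,
\]
so that all $r$-dependence sits in the single scalar $rU_{q:r}$; the envelope for dominated convergence then comes from an explicit change-of-measure bound on the Beta density of $rU_{q:r}/x$ (using $1-h\le e^{-h}$), and the two-regime split you propose is absorbed by noting the integrand already has mean zero on $\{rU_{q:r}\le x\}$. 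Your Poisson-process route reaches the same limit but is appreciably more work to make rigorous at exactly the step you flag as the main obstacle. In Part 2, you propose the sharper estimate $\expec[|\Km(Z)|^{2+\delta}]=\Oh(1/m)$, which would make the Lyapunov ratio $\Oh(k^{-\delta/2})$ for every $\delta>0$; the paper gets by with the much cheaper uniform bound $\expec[|\Km(Z)|^{2+\delta}]\le\expec[|\overline K_q(Z_1,\ldots,Z_q)|^{2+\delta}]=\Oh(1)$ (Jensen plus location-scale invariance plus GP threshold stability), at the price of having to choose $\delta>2/\epsilon$ under Condition~\ref{degree_sequence}. Both are valid; yours buys a cleaner final inequality at the cost of one more dominated-convergence argument.
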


The proofs of Propositions~\ref{Prop:variance} and~\ref{Prop:Hajek_distribution} are long and are given in Sections~\ref{proof:variance} and~\ref{sec:proof_Hajek}, respectively. 
Together, the two propositions are the required ingredients for deriving the asymptotic distribution of $U_{n, Z}^m$ in Theorem~\ref{main_GP_thm}. 

\section{Asymptotic theory: sample drawn from the DoA of a GEV distribution}
\label{DoA_bias}
\subsection{Conditions and general result}
Consider again the U-statistic $U_{n}^m$ in \eqref{eq:Unm} based on a location-scale invariant kernel,
but, in contrast to Section~\ref{sec:GP}, let $X_1,\ldots,X_n$ be a random sample with common distribution function $F$ that satisfies the DoA condition \eqref{DoA} with shape parameter $\gamma \in \reals$.
Our goal now is to prove the asymptotic normality of $U_{n}^{m}$ in full generality, not only for generalized Pareto distributed samples. 

The technique relies on a coupling construction, leading to a $\GP(\gamma)$ sample $Z_1,\ldots,Z_n$ associated to $X_1,\ldots,X_n$. The extreme U-statistic $U_{n,Z}^m$ of the $\GP(\gamma)$ sample will be close to $U_{n}^m$, up to an asymptotic bias term.

For a continuous distribution function $F$, define the tail quantile function as 
\begin{equation}
	\label{eq:tqf}
	U(y) := \inf \left\{ x \in \Real : (1-F(x))^{-1} \geq y \right\}, \qquad y > 1. 
\end{equation}
For instance, the tail quantile function of $\GP(\gamma)$ distribution is 
\begin{align}
	\label{GP_function}
	h_\gamma(y) = \int_1^y u^{\gamma-1} \diff u =
	\begin{dcases}
		(y^\gamma - 1)/\gamma & \text{if $\gamma \ne 0$,} \\
		\ln y & \text{if $\gamma = 0$.}
	\end{dcases}
\end{align}
To establish asymptotic theory, we construct the i.i.d.\ sample as 
\begin{equation}
\label{eq:Y2X}
	(X_1,\ldots,X_n)=(U(Y^*_1),\ldots,U(Y^*_n)),
\end{equation}
for independent Pareto(1) random variables $Y^*_1,\ldots,Y^*_n$, i.e., $\P(Y_i^* \le y) = 1 - 1/y$ for $y \ge 1$.

Since the distribution function $F$ is in the DoA of a GEV distribution with shape parameter $\gamma \in \reals$, there exists a measurable auxiliary function $a:(1, \infty) \to (0, \infty)$ such that
\begin{align} \label{tail_quantile_DoA} 
	\lim_{x \to \infty } \frac{U(xy)-U(x)}{a(x)}= h_{\gamma}(y), \qquad \text{ locally uniformly in $y \in (0, \infty)$,}
\end{align}
see for instance \cite[Theorem~1.1.6]{deHaanFerreira2006extreme}.
Define 
\[
(Z_1,\ldots,Z_n)=(h_\gamma(Y^*_1),\ldots,h_\gamma(Y^*_n)).
\]
for the same Pareto(1) random variables as in \eqref{eq:Y2X}.
As in Section~\ref{sec:GP}, let the U-statistic with kernel $K_m$ in \eqref{eq:KmK} corresponding to the i.i.d.\ $\GP(\gamma)$ random variables $Z_1,\ldots, Z_n$ be denoted by
\[
	U^{m}_{n,Z}
	= \binom{n}{m}^{-1} \sum_{I \subset [n], |I| = m} K_{m}(Z_I).
\]

We make the following assumptions on the tail quantile function $U$ and the intermediate sequence $k$.

\begin{condition}
	\label{derivative_tail_quantile}	
	The tail quantile function $U$ in~\eqref{eq:tqf} is twice differentiable. Furthermore, the function $A$ defined by 
	\[
	A(t)=\frac{t \, U''(t)}{U'(t)}-\gamma+1
	\]
	is eventually positive or negative for large $t$ and satisfies $\lim_{t \to \infty}A(t)=0$, while the function $|A|$ is regularly varying with index $\rho < 0$. 
\end{condition}

\begin{condition}
	\label{balance_bias}
	The sequence $k:=n/m$ satisfies $\sqrt{k} \, A(m) \to \lambda \in \mathbb{R}$ as $n \to \infty$.
\end{condition}
Condition~\ref{derivative_tail_quantile} is slightly stronger than the typical second-order condition; see \cite[Theorem~2.3.12]{deHaanFerreira2006extreme}. Condition~\ref{balance_bias} is a typical assumption in extreme value statistics leading to a finite asymptotic bias.

Recall that $I$ is a set of $m$ positive integers. Define
\begin{align*}
	\xi^I_m=\frac{K_m(X_I)-K_m(Z_I)}{A(m)}
\end{align*}
for $A$ as in Condition~\ref{derivative_tail_quantile} and simply write $\xi_m$ whenever only the marginal distribution is of concern. We find
\begin{align*}
	\sqrt{k}\left(  { U}^{m}_{n}-{ U}^{m}_{n,Z} \right)
	&=\sqrt{k}   \binom{n}{m}^{-1}  \sum_{I \subset [n], |I| = m} \left( K_m\left(X_I\right)-  K_m\left(Z_I\right) \right)  \\
	&=\sqrt{k}A(m ) \,  U^m_{n, \xi},
\end{align*}
where, for fixed $m$,
\[
U^m_{n, \xi}=\binom{n}{m}^{-1}  \sum_{I \subset [n], |I| = m} \xi^I_m
\]
is another U-statistic. 

The following condition assumes the weak convergence of the U-statistic $U^m_{n, \xi}$:
\begin{condition}
	\label{biasterm_with_limit_BK}
	As $n\to\infty$, we have $U^m_{n, \xi}\stackrel{p}{\to}B_{K}$ for some constant $B_{K}\in \Real$.
\end{condition}

\begin{theorem}
	\label{Main-XU-result}
	Suppose that $X_1,\ldots,X_n$ are i.i.d.\ random variables with a common distribution function $F$ that satisfies the DoA condition \eqref{DoA} with shape parameter $\gamma \in \reals$. Under Conditions~\ref{Condition_kernel_scale-loc_integrability}--\ref{biasterm_with_limit_BK}, 
	the corresponding extreme U-statistic, $U_{n}^m$, satisfies 
	\[
		\sqrt{k}\left(U_{n}^m  -\theta  \right) 
		\dto N \left(
			\lambda  B_{K}, \sigma^2_{K}(\gamma)
		\right),
	\]
	where $\sigma^2_{K}(\gamma)$ is as in Proposition~\ref{Prop:Hajek_distribution} and $\theta=\mu(\gamma)$ is as in Theorem~\ref{main_GP_thm}.
\end{theorem}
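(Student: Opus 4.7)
The plan is to decompose
\[
\sqrt{k}\bigl(U_{n}^m-\theta\bigr)
= \sqrt{k}\bigl(U_{n}^m-U_{n,Z}^m\bigr)+\sqrt{k}\bigl(U_{n,Z}^m-\theta\bigr)
\]
and to handle the two pieces separately, identifying the first as the bias contribution and the second as the variance contribution.

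For the second summand I would invoke Theorem~\ref{main_GP_thm} directly: the variables $Z_1,\ldots,Z_n=h_\gamma(Y^*_1),\ldots,h_\gamma(Y^*_n)$ form an i.i.d.\ $\GP(\gamma)$ sample and the location-scale invariant kernel $K$ satisfies Conditions~\ref{Condition_kernel_scale-loc_integrability} and~\ref{degree_sequence} by hypothesis, so
\[
\sqrt{k}\bigl(U_{n,Z}^m-\theta\bigr)\dto N\bigl(0,\sigma_K^{2}(\gamma)\bigr),\qquad n\to\infty.
\]
Here $\theta=\lim_{m\to\infty}\theta_m=\mu(\gamma)$ is well-defined thanks to the threshold-stability of the GP law, so no further work is needed on this term.

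For the first summand I use the exact identity derived just before the statement,
\[
\sqrt{k}\bigl(U_{n}^m-U_{n,Z}^m\bigr) = \sqrt{k}\,A(m)\,U_{n,\xi}^m.
\]
Condition~\ref{balance_bias} gives $\sqrt{k}\,A(m)\to\lambda$ and Condition~\ref{biasterm_with_limit_BK} gives $U_{n,\xi}^m\pto B_K$. By Slutsky's lemma, the product converges in probability to the constant $\lambda B_K$.

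Finally I combine the two pieces via another application of Slutsky: the first term converges in probability to the deterministic limit $\lambda B_K$ while the second term converges in distribution to $N(0,\sigma_K^{2}(\gamma))$, so their sum converges in distribution to $N(\lambda B_K,\sigma_K^{2}(\gamma))$, as claimed. The argument is essentially mechanical once the two ingredient propositions are in hand; the only subtle point, which is pushed entirely into the background conditions, is that the coupling \eqref{eq:Y2X} aligns the two U-statistics $U_n^m$ and $U_{n,Z}^m$ on the same probability space so that their difference can be analysed pathwise, and that Conditions~\ref{derivative_tail_quantile} and~\ref{biasterm_with_limit_BK} are strong enough to force $U_{n,\xi}^m$ to concentrate at a constant rather than at a random limit. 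The real difficulty lies not in the proof of this theorem itself but in subsequently verifying Condition~\ref{biasterm_with_limit_BK} in concrete cases (as is done for $q=3$ in Theorem~\ref{Prop:bias}).
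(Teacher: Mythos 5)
Your proposal is correct and follows essentially the same route as the paper: decompose $\sqrt{k}(U_n^m-\theta)$ into the GP term $\sqrt{k}(U_{n,Z}^m-\theta)$ handled by Theorem~\ref{main_GP_thm} and the coupling term $\sqrt{k}A(m)\,U_{n,\xi}^m$ handled by Conditions~\ref{balance_bias} and~\ref{biasterm_with_limit_BK}, then combine via Slutsky. Your added remarks on the role of the coupling~\eqref{eq:Y2X} and on Condition~\ref{biasterm_with_limit_BK} being the real burden are accurate but do not change the argument.
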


\begin{proof}[Proof of Theorem \ref{Main-XU-result}]
Write
\begin{align*}
	\sqrt{k} \left( U_{n}^m - \theta \right)
	&= \sqrt{k}\left( U_n^m-U_{n, Z}^m \right)
	+ \sqrt{k}\left( U^m_{n, Z} -\theta \right)\\
	&= \sqrt{k}A(m ) \,  U^m_{n, \xi}
	+ \sqrt{k}\left( U^m_{n, Z} -\theta \right)\\
	&=: I_1+I_2.
\end{align*}
With Condition~\ref{Condition_kernel_scale-loc_integrability}  and~\ref{degree_sequence}, we can apply Theorem \ref{main_GP_thm} to get that $I_2\stackrel{d}{\to}N \left(0, \sigma^2_{K}(\gamma)\right)$. Condition~\ref{derivative_tail_quantile}-\ref{biasterm_with_limit_BK} imply that $I_1\dto\lambda  B_{K}$, which completes the proof.
\end{proof}

Note that $\sigma^2_{K}(\gamma)$ does not depend on the distribution of $X$. Therefore, a practical consequence of Theorem \ref{Main-XU-result} is that, for kernels $K$ satisfying the required conditions, one can do a parametric bootstrap using $\GP(\hat{\gamma})$ distributed random variables to obtain an estimate of $\sigma^2_{K}(\gamma)$, where $\hat{\gamma}$ can be any reasonable estimator of $\gamma$, provided that $\sigma^2_{K}(\gamma)$ is a continuous function of $\gamma$.

To make use of Theorem \ref{Main-XU-result}, one needs to verify that the kernel $K$ and the tail quantile function $U$ satisfy Condition~\ref{biasterm_with_limit_BK}. In Section~\ref{Location-scale invariant kernels}, we deal with kernels where $q=3$ --- the minimum value required to obtain a location-scale invariant kernel that is not everywhere constant --- and proceed to provide verifiable sufficient conditions under which Condition~\ref{biasterm_with_limit_BK} holds, giving in addition an explicit expression for $B_K$.

\subsection{Location-scale invariant kernels where $q=3$}
\label{Location-scale invariant kernels}

For a location-scale invariant kernel $K:\{(x_1,x_2,x_3) \in \Real^3 : x_1 > x_2 > x_3\} \to \Real$, we may write 
\begin{align}
\nonumber
	K(x_1,x_2,x_3)
	&= K(x_1-x_2, 0, x_3-x_2) \\
	\nonumber &= K\left(\frac{x_1-x_2}{x_2-x_3}, 0, -1 \right) \\
\label{eq:Kg}
	&=: g\left( \ln\left(\frac{x_1-x_2}{x_2-x_3}\right) \right)
\end{align}
for some function $g: \Real \to \Real$, with $x_1>x_2>x_3$. We assume that $F$ is continuous, so that ties do not occur with probability one.
Further, note that a location-scale function of one or two real arguments is necessarily constant, which is why we consider $q = 3$.

Next we assume the existence of first-order partial derivatives of $K$, which is equivalent to the differentiability of $g$. Let $g'$ denote the derivative of $g$. We assume the following condition on $g'$.

\begin{condition}
	\label{kernel_derivatives}
	The derivative $g'$ is continuous and satisfies one of the following two properties:
	\begin{itemize}
		\item[(a)] The function $g'$ is monotone and  there exists $\epsilon>0$ such that
		\[
		\limsup _{m \geq 3 } \expec \left[ \left| 
		g'\left( 
		\ln \left(
		\frac{X_{m:m}-X_{m-1:m}}{X_{m-1:m}-X_{m-2:m}}
		\right)  
		\right) 
		\right|^{2+\epsilon} \right] 
		< \infty,
		\]
		\item[(b)] The function $g'$ is bounded.
	\end{itemize}
\end{condition}

In addition, we assume that the moments of the kernel function do not grow too quickly as $n\to\infty$.

\begin{condition}
	\label{kernel_moment}
	There exist $\epsilon>0$ and $a>0$ such that
	\begin{equation}
		\label{weaker}
		\expec \left[
			\left|K_m(X_1,\ldots,X_m)\right|^{2+\epsilon} 
		\right]
		= \Oh(m^a), 
		\qquad m \to \infty.
	\end{equation} 
\end{condition}

We obtain the following consequence of Theorem~\ref{Main-XU-result}.

\begin{theorem}\label{Prop:bias}
	For a kernel $K:\Real^3 \to \reals$, Conditions~\ref{derivative_tail_quantile}, \ref{kernel_derivatives} and~\ref{kernel_moment} imply Condition~\ref{biasterm_with_limit_BK} with
	\begin{equation}
		\label{eq:BK}
		B_K=B_{K}(\rho, \gamma) 
		:= \expec \left[ S_3^{-\rho} \right] 
		\expec \left[ 
		\sum_{j=1}^{2}
		\dot{K}_{j}\left( 
		\{h_{\gamma}\left( Y_{3-j:2}\right) \}_{j=1}^3
		\right)
		H_{\gamma, \rho}(Y_{3-j: 2})
		\right],
	\end{equation}
	where $S_3$ is an $\operatorname{Erlang}(3)$ random variable and where $H_{\gamma,\rho}$ is defined in \eqref{eq:Hgammarho}.
	
	Under Conditions~\ref{Condition_kernel_scale-loc_integrability}--\ref{balance_bias}, ~\ref{kernel_derivatives} and \ref{kernel_moment}, the extreme U-statistic $U_{n}^m$ satisfies
	\[
	\sqrt{k}\left(U_{n}^m  -\theta  \right) 
	\dto N \left(\lambda  B_{K}(\rho, \gamma), \sigma^2_{K}(\gamma)\right), 
	\qquad n \to \infty
	\]
	where $\sigma^2_{K}(\gamma)$ is the same as in Proposition~\ref{Prop:Hajek_distribution} and for $\theta=\mu(\gamma)$ as defined prior to Theorem~\ref{main_GP_thm}.
\end{theorem}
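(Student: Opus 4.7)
The plan is to prove Condition~\ref{biasterm_with_limit_BK} with the explicit expression~\eqref{eq:BK} for $B_K$; the asymptotic normality claim then follows by a direct application of Theorem~\ref{Main-XU-result}. Since $U_{n,\xi}^{m}$ is itself a U-statistic of degree $m$ with kernel $\xi_m$, we have $\expec[U_{n,\xi}^{m}]=\expec[\xi_m]$, and the standard Hoeffding variance inequality for U-statistics gives $\Var(U_{n,\xi}^{m})=\Oh(\Var(\xi_m)/k)$ as $n\to\infty$. Because $k\to\infty$, Chebyshev's inequality reduces the problem to showing (a) $\expec[\xi_m]\to B_K$ and (b) $\sup_m \Var(\xi_m)<\infty$.

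For (a), I would couple $X_i=U(Y_i^*)$ with $Z_i=h_\gamma(Y_i^*)$ for i.i.d.\ Pareto(1) variables $Y_i^*$ and set $T_{j,m}:=Y^*_{m-j+1:m}/m$. Using the location-scale invariance of $K$ to recenter the arguments by $U(m)$ (respectively $h_\gamma(m)$) and rescale by $a(m)$ (respectively $m^\gamma$), the functional identity $h_\gamma(mt)=h_\gamma(m)+m^\gamma h_\gamma(t)$ yields $K_m(Z_I)=K\bigl(h_\gamma(T_{1,m}),h_\gamma(T_{2,m}),h_\gamma(T_{3,m})\bigr)$, while Condition~\ref{derivative_tail_quantile} supplies the second-order expansion $(U(mt)-U(m))/a(m)-h_\gamma(t)=A(m)H_{\gamma,\rho}(t)(1+\oh(1))$. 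A first-order Taylor expansion of $K$ around $(h_\gamma(T_{j,m}))_j$ then gives, heuristically,
\[
\xi_m=\frac{K_m(X_I)-K_m(Z_I)}{A(m)}\approx\sum_{j=1}^{3}\dot{K}_j\bigl(h_\gamma(T_{1,m}),h_\gamma(T_{2,m}),h_\gamma(T_{3,m})\bigr)H_{\gamma,\rho}(T_{j,m}).
\]

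To identify the limit, I would use Pareto scale-memorylessness: conditional on $Y^*_{m-2:m}=w$, the two larger order statistics are $w$ times the order statistics of two independent Pareto(1) variables. This yields the factorization $(T_{1,m},T_{2,m},T_{3,m})\dto S_3^{-1}(Y_{2:2},Y_{1:2},1)$, with $S_3$ independent of $(Y_{2:2},Y_{1:2})$ (setting $Y_{0:2}:=1$). Combining the identity $h_\gamma(xy)=h_\gamma(x)+x^\gamma h_\gamma(y)$ with the analogous identity (verified by direct integration of the representation of $H_{\gamma,\rho}$)
\[
H_{\gamma,\rho}(xy)=H_{\gamma,\rho}(x)+x^\gamma h_\rho(x)h_\gamma(y)+x^{\gamma+\rho}H_{\gamma,\rho}(y),
\]
and the two invariance identities $\sum_{j}\dot{K}_j=0$ (location) and $\sum_j x_j\dot{K}_j=0$ (scale) obtained by differentiating the invariance relations of $K$, the $H_{\gamma,\rho}(1/S_3)$ and $h_\rho(1/S_3)$ contributions cancel, so that the limiting sum collapses to
\[
S_3^{-\rho}\sum_{j=1}^{2}\dot{K}_j\bigl(h_\gamma(Y_{2:2}),h_\gamma(Y_{1:2}),0\bigr)H_{\gamma,\rho}(Y_{3-j:2}).
\]
Taking expectations and using independence of $S_3$ from $(Y_{2:2},Y_{1:2})$ reproduces~\eqref{eq:BK}.

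The main obstacle will be (b) together with the rigorous upgrade of the heuristic Taylor expansion above to convergence in expectation and a uniform variance bound. Here I would combine Condition~\ref{kernel_derivatives} (which gives either a uniform or an $L^{2+\epsilon}$ bound on $g'$, hence on $\dot{K}_j$ at intermediate Taylor points) with Condition~\ref{kernel_moment} (polynomial growth of the moments of $K_m$) and Potter-type bounds on the regularly varying $|A|$ implied by Condition~\ref{derivative_tail_quantile}. Together these should yield the uniform integrability needed for $\expec[\xi_m]\to B_K$ and $\sup_m\Var(\xi_m)<\infty$, verifying Condition~\ref{biasterm_with_limit_BK}, after which Theorem~\ref{Main-XU-result} delivers the asymptotic normality.
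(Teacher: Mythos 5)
Your high-level plan matches the paper's: reduce Condition~\ref{biasterm_with_limit_BK} to $\expec[\xi_m]\to B_K$ plus a variance bound via Hoeffding's inequality for U-statistics, apply Chebyshev, and conclude with Theorem~\ref{Main-XU-result}. Your route to the limit is algebraically different from the paper's but does arrive at the right expression. The paper normalizes the Pareto order statistics by $Y^*_{m-2:m}$ rather than by $m$, which by R\'enyi's representation makes the arguments of $K$ in the $\GP(\gamma)$ term independent of $Y^*_{m-2:m}$; the factor $S_3^{-\rho}$ then drops out directly from regular variation of $|A|$ via $A(Y^*_{m-2:m})/A(m)\to S_3^{-\rho}$, and the location-scale identities are not invoked at that stage. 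Your version normalizes by $m$, and you recover $S_3^{-\rho}$ by pushing the extra $h_\gamma(1/S_3)$, $h_\rho(1/S_3)$, $H_{\gamma,\rho}(1/S_3)$ terms through the transformation law $H_{\gamma,\rho}(xy)=H_{\gamma,\rho}(x)+x^\gamma h_\rho(x)h_\gamma(y)+x^{\gamma+\rho}H_{\gamma,\rho}(y)$, the gradient scaling $\dot K_j(a\mathbf{z}+b\mathbf{1})=a^{-1}\dot K_j(\mathbf{z})$, and the identities $\sum_j\dot K_j=0$, $\sum_j z_j\dot K_j=0$. Both are valid; the paper's choice is cleaner because it decouples the $S_3$ and $(Y_{1:2},Y_{2:2})$ directions early on.

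The genuine gap is in the last paragraph, where you write that Condition~\ref{kernel_derivatives} ``gives either a uniform or an $L^{2+\epsilon}$ bound on $g'$, hence on $\dot K_j$ at intermediate Taylor points''. This inference is false. For a location-scale invariant $K$ of three arguments one has
\[
\dot K_1(x)=\frac{g'\bigl(\ln((x_1-x_2)/(x_2-x_3))\bigr)}{x_1-x_2},\qquad
\dot K_3(x)=\frac{g'\bigl(\ln((x_1-x_2)/(x_2-x_3))\bigr)}{x_2-x_3},
\]
so even if $g'$ is bounded the partial derivatives blow up as the spacings $x_1-x_2$ or $x_2-x_3$ shrink, and the mean-value point $\hat Z_m$ is random and can place itself where a spacing is small. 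The uniform-integrability step therefore cannot be obtained by bounding $\dot K_j(\hat Z_m)$ and $H_{\gamma,\rho}$ separately. The paper's proof regroups
\[
\sum_{j=1}^3\dot K_j(\hat Z_m)\,\tilde h(Y_{3-j:2})
= g'\bigl(\cdot\bigr)\left(\frac{\tilde h(Y_{2:2})-\tilde h(Y_{1:2})}{\hat Z_{2:2}-\hat Z_{1:2}}
+\frac{\tilde h(Y_{0:2})-\tilde h(Y_{1:2})}{\hat Z_{1:2}-\hat Z_{0:2}}\right),
\]
so that the singular denominators $\hat Z_{j:2}-\hat Z_{j-1:2}$ are paired with numerators that vanish at the same rate, and it is precisely Lemma~\ref{Galton_kernel} (via Potter-type bounds on $U'$ and $A$) that shows each ratio is dominated by a random variable with all moments finite. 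Without a step of this kind, your argument for uniform integrability of $\xi_m$ (and, a fortiori, of $\xi_m^2$, which is what the variance bound actually requires) does not close.
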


\section{Estimating the extreme value index}
\label{Application}
We develop a kernel depending on the top-3 observations in a block producing an extreme U-statistic that is an unbiased estimator of the shape parameter $\gamma \in \Real$ when the observations are independently sampled from a $\GP(\gamma)$ distribution.
For integer $m \ge 3$, define the permutation and location-scale invariant kernel $\KPm : \reals^m \to \reals$ by
\begin{equation}
\label{eq:KPm}
	\KPm(x_1,\ldots,x_m) = \KP(x_{m:m}, x_{m-1:m}, x_{m-2:m}),
\end{equation}
with $\KP : \{(y_1, y_2, y_3) \in \reals^3 : y_1 > y_2 > y_3 \} \to \reals$ given by
\begin{equation}
	\label{eq:K*}
	\KP(y_1, y_2, y_3) 
	= 2 \ln \left( \frac{y_1-y_2}{y_2-y_3} \right) 
	- \ln \left( \frac{y_1-y_3}{y_2-y_3} \right)
	= \ln \left( \frac{(y_1-y_2)^2}{(y_1-y_3)(y_2-y_3)} \right).
\end{equation}
Because of the resemblance with the estimator of the extreme value index defined in \cite{pickands1975statistical}, we call $\KP$ the \emph{Pickands kernel}. It consists of two terms: the first one keeps the so-called Galton ratios as small as possible, see~\cite{segers2000testing}, while the second one was considered in~\cite{segers2001}.

The extreme U-statistic based on $\KP$ has a particularly attractive property.

\begin{proposition}
	\label{unbiased_estimator}
	For $\gamma \in \reals$, integer $n \ge m \ge 3$, and independent $\GP(\gamma)$ random variables $Z_1,\ldots,Z_n$, we have
	\[
		\expec \left[ \KPm(Z_1,\ldots,Z_m)  \right]=\gamma.
	\]
	Consequently, the U-statistic 
	\begin{equation*}
		\binom{n}{m}^{-1} \sum_{I \subset [n], |I| = m} \KPm(Z_I)
	\end{equation*}
	is an unbiased estimator of $\gamma$.
\end{proposition}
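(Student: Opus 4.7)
The plan is to reduce $\expec[\KPm(Z_1,\ldots,Z_m)]$ to an expectation over just two iid $\GP(\gamma)$ variables, and then to evaluate it in closed form via a Rényi spacings representation.

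First, I would use location-scale invariance of $\KP$ together with threshold stability of $\GP(\gamma)$. Since $\KPm$ depends on $Z_1,\ldots,Z_m$ only through $(Z_{m-2:m}, Z_{m-1:m}, Z_{m:m})$, I condition on $Z_{m-2:m} = z$: by threshold stability, the conditional distribution of $((Z_{m-1:m}-z)/(1+\gamma z),\, (Z_{m:m}-z)/(1+\gamma z))$ is that of the order statistics $(W_{1:2}, W_{2:2})$ of two iid $\GP(\gamma)$ variables $W_1, W_2$. Location-scale invariance of $\KP$ absorbs both the shift $z$ and the scale $1+\gamma z$, giving
\[
\expec[\KPm(Z_1,\ldots,Z_m)] = \expec[\KP(W_{2:2}, W_{1:2}, 0)]
\]
for every $m \ge 3$; in particular the right-hand side no longer depends on $m$.

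Second, I would compute this two-sample expectation. For $\gamma \neq 0$, write $W_i = h_\gamma(e^{E_i}) = (e^{\gamma E_i}-1)/\gamma$ with $E_1, E_2$ iid $\operatorname{Exp}(1)$, and invoke the Rényi decomposition $V := E_{1:2} \sim \operatorname{Exp}(2)$ and $U := E_{2:2} - E_{1:2} \sim \operatorname{Exp}(1)$ independent. A short manipulation yields $W_{2:2} - W_{1:2} = e^{\gamma V}(e^{\gamma U}-1)/\gamma$; expanding $\KP(W_{2:2}, W_{1:2}, 0) = 2\ln(W_{2:2}-W_{1:2}) - \ln W_{1:2} - \ln W_{2:2}$ (all three arguments positive), the $\ln|\gamma|$ contributions cancel identically and one arrives at
\[
\KP(W_{2:2}, W_{1:2}, 0) = 2\gamma V + 2\ln|e^{\gamma U}-1| - \ln|e^{\gamma V}-1| - \ln|e^{\gamma(V+U)}-1|.
\]
Taking expectations, the first term contributes $2\gamma \expec[V] = \gamma$. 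The three logarithmic terms cancel by two symmetry observations: (i) $U \sim \operatorname{Exp}(1)$, so $2\expec[\ln|e^{\gamma U}-1|] = 2\expec[\ln|e^{\gamma E}-1|]$; and (ii) $(V, V+U) = (E_{1:2}, E_{2:2})$ is merely a reordering of $(E_1, E_2)$, so $\expec[\ln|e^{\gamma V}-1|] + \expec[\ln|e^{\gamma(V+U)}-1|] = 2\expec[\ln|e^{\gamma E}-1|]$. Hence $\expec[\KPm] = \gamma$. The case $\gamma = 0$ is handled identically with $W_i \sim \operatorname{Exp}(1)$, using $\expec[\ln W_{1:2}] + \expec[\ln W_{2:2}] = \expec[\ln W_1] + \expec[\ln W_2]$ and the independence of the exponential spacings $W_{1:2}$ and $W_{2:2} - W_{1:2}$ (alternatively, by continuity letting $\gamma \to 0$).

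The main obstacle is spotting the symmetry-based cancellation: once the kernel is written in the exponential coordinates $(V, U)$, the only surviving contribution is the single linear term $2\gamma V$, producing the answer $\gamma$ almost automatically. Everything else --- threshold stability of $\GP(\gamma)$, the cancellation of $\ln|\gamma|$ coming from location-scale invariance, and integrability of $\ln|e^{\gamma E}-1|$ against the exponential density (logarithmic singularity near $0$, bounded or at most linearly growing tail) --- is routine. Unbiasedness of the U-statistic then follows by linearity of expectation, since every size-$m$ block $I$ contributes $\expec[\KPm(Z_I)] = \gamma$.
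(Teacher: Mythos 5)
Your proof is correct, and it takes a genuinely different route from the paper. The paper's proof (via its Lemma~\ref{moment_EXP_GP}) computes $\expec[\ln Z_1]$, $\expec[\ln Z_{1:2}]$, $\expec[\ln Z_{2:2}]$ explicitly in terms of the digamma function (citing known formulas for exponentiated GP moments), splits into the three cases $\gamma>0$, $\gamma<0$, $\gamma=0$, forms the two auxiliary quantities $u_1(\gamma)$ and $u_2(\gamma)$, and checks by substitution that $2u_1-u_2=\gamma$. You instead pass to exponential coordinates $W_i = (e^{\gamma E_i}-1)/\gamma$, use R\'enyi's decomposition $V=E_{1:2}\sim\operatorname{Exp}(2)$, $U=E_{2:2}-E_{1:2}\sim\operatorname{Exp}(1)$ independent, and observe two exact cancellations: the $\ln|\gamma|$ contributions vanish because the kernel is scale invariant, and the terms $2\ln|e^{\gamma U}-1|$, $-\ln|e^{\gamma V}-1|$, $-\ln|e^{\gamma(V+U)}-1|$ cancel in expectation because $U\eqd E$ and $(V,V+U)$ is a reordering of $(E_1,E_2)$. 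What survives is $2\gamma V$ with expectation $\gamma$. Your argument is more self-contained (no digamma identities imported), treats the sign of $\gamma$ uniformly, and makes transparent \emph{why} the Pickands kernel is unbiased --- the linear term in $V$ is the whole story --- rather than presenting it as an algebraic coincidence. The one point worth spelling out, which you did note in passing, is that each logarithmic term is separately integrable (logarithmic singularity at $0$, at most linear growth in the tail when $\gamma>0$, bounded when $\gamma\le0$), since otherwise the cancellation in expectation would be only formal. The paper's approach has the compensating virtue of recording closed-form digamma expressions that are reusable for variance or bias computations. Both are sound; yours is shorter and more conceptual.
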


For a random sample $X_1,\ldots,X_n$ from a distribution $F$ with extreme value index~$\gamma$, we need a weak additional condition on $F$ to guarantee the asymptotic normality of $\sqrt{k} \left(\UP - \gamma\right)$, where
\begin{equation}
	\label{eq:Un*m}
	\UP := \binom{n}{m}^{-1} \sum_{I \subset [n], |I| = m} \KPm(X_I)
\end{equation}
is the \emph{extreme U-Pickands estimator}.

\begin{condition}
	\label{cond:f}
	The distribution function $F$ admits a density function $f$ with support $\mathcal{E} = \{x \in \reals : f(x) > 0 \}$, say. There exist $\epsilon>0$ and $p>2$ such that
	\begin{align}
		\nonumber
		\int_{-\infty}^{-1} \left(\ln(-x)\right)^p f(x) \, \diff x 
		&< \infty, \\
		\label{conditie:CDF}
		\sup_{x\in \mathcal{E}}
		\left|F(x)-F(x+e^{-r^{1/p}}) \right|
		&= \Oh(r^{-(1+\epsilon)}), \qquad \text{ as } r \to \infty.
	\end{align}
\end{condition}

Note that \eqref{conditie:CDF} holds as soon as the density $f$ is bounded, since $F(y)-F(x) = \int_x^y f(t) \, \diff t \le (y-x) \sup_{t \in \mathcal{E}} f(t)$ for real $x < y$.

\begin{theorem}
	\label{Prop_kernel_density}
	If Conditions~\ref{degree_sequence}, \ref{derivative_tail_quantile}, \ref{balance_bias} and~\ref{cond:f} are fulfilled, then
	\[
	\sqrt{k} \left(\UP - \gamma\right) \dto
	N \left( \lambda B_{\KP}(\gamma, \rho), \sigma_{\KP}^2(\gamma) \right),
	\qquad n \to \infty,
	\]
	where the asymptotic variance $\sigma_{\KP}^2$ and the bias $B_{\KP}$ are given in equations~\eqref{eq:sigmaK} and~\eqref{eq:BK}, respectively, with $K$ to be replaced by $\KP$ defined in~\eqref{eq:K*}.
\end{theorem}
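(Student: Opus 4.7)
The approach is to recognize Theorem~\ref{Prop_kernel_density} as a special instance of Theorem~\ref{Prop:bias} applied to the Pickands kernel $K=\KP$ defined in~\eqref{eq:K*}. By Proposition~\ref{unbiased_estimator}, the kernel parameter equals $\theta=\mu(\gamma)=\gamma$, which matches the centering in the stated conclusion. The hypotheses of Theorem~\ref{Prop_kernel_density} include Conditions~\ref{degree_sequence}, \ref{derivative_tail_quantile}, and~\ref{balance_bias} directly, so it remains to verify Conditions~\ref{Condition_kernel_scale-loc_integrability}, \ref{kernel_derivatives} and~\ref{kernel_moment} for $\KP$.

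In the representation~\eqref{eq:Kg}, the Pickands kernel corresponds to
\[
g(t)=2t-\ln(1+e^t), \qquad g'(t)=2-\frac{e^t}{1+e^t}=1+\frac{1}{1+e^t}\in(1,2).
\]
Thus $g'$ is continuous, monotone and bounded, so part~(b) of Condition~\ref{kernel_derivatives} is immediate. For Condition~\ref{Condition_kernel_scale-loc_integrability} with $q=3$ and $Z_{0:2}:=0$, we must show that all moments of $\KP(Z_{2:2},Z_{1:2},0)=\ln\bigl((Z_{2:2}-Z_{1:2})^2/(Z_{2:2}Z_{1:2})\bigr)$ are finite, which reduces by the triangle inequality to the finiteness of $\expec[|\ln Z_{1:2}|^p]$, $\expec[|\ln Z_{2:2}|^p]$ and $\expec[|\ln(Z_{2:2}-Z_{1:2})|^p]$ for every $p>0$. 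Each of these follows from the explicit form of the $\GP(\gamma)$ distribution: the density is bounded near $0$ (controlling the $-\infty$ singularity of $\ln$ against a polynomial blow-up) and the upper tail is either polynomial or exponential (controlling the $+\infty$ tail), while the joint density of $(Z_{1:2},Z_{2:2})$ is bounded near the diagonal.

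For Condition~\ref{kernel_moment}, the triangle inequality gives
\[
|\KPm(X_1,\ldots,X_m)|\le 2|\ln(X_{m:m}-X_{m-1:m})|+2|\ln(X_{m-1:m}-X_{m-2:m})|+|\ln(X_{m:m}-X_{m-2:m})|,
\]
so it suffices to bound $\expec\bigl[|\ln(X_{m-i+1:m}-X_{m-j+1:m})|^{2+\eps}\bigr]$ polynomially in $m$ for $1\le i<j\le 3$. Writing $X_i=U(Y_i^*)$ with $Y_i^*$ independent Pareto(1) and applying the mean value theorem together with the regular variation of $U'$ encoded in Condition~\ref{derivative_tail_quantile}, one reduces to controlling logs of spacings of the top Pareto order statistics (which have bounded $(2+\eps)$-moments by standard exponential representations) and logs of $U'$ at these order statistics (which grow at most polynomially). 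The first moment hypothesis in Condition~\ref{cond:f} handles the left tail of $F$ when $\gamma<0$ and the endpoint is finite, while the modulus-of-continuity hypothesis~\eqref{conditie:CDF} yields $\prob\bigl(F(X_{m-i+1:m})-F(X_{m-j+1:m})\le e^{-r^{1/p}}\bigr)=\Oh(r^{-(1+\eps)})$ uniformly in $m$, which is exactly the tail decay needed to raise $|\ln|$ to power $2+\eps$ and retain integrability.

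The main obstacle is Condition~\ref{kernel_moment}: converting the density-level smoothness condition~\eqref{conditie:CDF} on $F$ into a uniform-in-$m$ polynomial moment bound on the logarithm of spacings of the top three order statistics. Once all three conditions have been verified, Theorem~\ref{Prop:bias} produces the asymptotic normality of $\sqrt{k}(\UP-\gamma)$ with variance $\sigma_{\KP}^2(\gamma)$ and bias $\lambda B_{\KP}(\rho,\gamma)$, completing the proof.
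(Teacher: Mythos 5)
Your high-level plan is correct and matches the paper's strategy exactly: apply Theorem~\ref{Prop:bias} to $K=\KP$, use $\theta=\gamma$ from Proposition~\ref{unbiased_estimator}, and verify Conditions~\ref{Condition_kernel_scale-loc_integrability}, \ref{kernel_derivatives} and~\ref{kernel_moment}. The verification of Condition~\ref{kernel_derivatives} via the explicit representation $g'(t)=2-e^t/(1+e^t)\in(1,2)$ is precisely what the paper does (part~(b), boundedness), and your verification of Condition~\ref{Condition_kernel_scale-loc_integrability} by reducing to finiteness of logarithmic moments of GP spacings is the same idea the paper executes via the Pareto representation in Lemma~\ref{lem:GPKm} and \cite[Corollary~3]{lee2019exponentiated}.

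However, your sketch of the verification of Condition~\ref{kernel_moment} --- which you rightly flag as the main obstacle --- diverges from the paper and contains a technical imprecision that would block the argument as written. First, the mean-value-theorem detour via $X_i=U(Y_i^*)$ and $U'$ is not what the paper uses, and it is delicate: the bound $\ln U'(\xi)\approx(\gamma-1)\ln\xi$ from regular variation is only valid for $\xi$ large, whereas $\xi$ can lie near $1$ when $Y^*_{m-2:m}$ is small, exactly where $U'$ may misbehave; the paper avoids this by never invoking the mean value theorem here. Second, your claimed bound $\prob\bigl(F(X_{m-i+1:m})-F(X_{m-j+1:m})\le e^{-r^{1/p}}\bigr)=\Oh(r^{-(1+\eps)})$ \emph{uniformly in} $m$ is not true and is not what the paper proves: the $F$-spacings of consecutive top order statistics are Beta$(1,m)$-type variables whose probability of being smaller than a fixed threshold grows linearly in $m$. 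What the paper actually does in display~\eqref{small_gap} is to express the distribution function $\tilde F_j$ of the $X$-spacing $X_{m-j:m}-X_{m-j-1:m}$ explicitly, apply the modulus-of-continuity hypothesis~\eqref{conditie:CDF} to the integrand, and obtain an $\Oh(m^2)$ moment bound (not $\Oh(1)$), which is all that Condition~\ref{kernel_moment} demands. For the large-spacing part~\eqref{large_gap}, the paper splits on the events $\{X_{m:m}\gtrless 1\}\times\{X_{m-2:m}\gtrless -1\}$, uses the left-tail moment hypothesis of Condition~\ref{cond:f} to control $\expec[(\ln(-X_{m-2:m}))^p\1\{X_{m-2:m}<-1\}]=\Oh(1)$, and uses \cite[Theorem~5.3.2]{deHaanFerreira2006extreme} to get $\expec[X_{m:m}^\theta\1\{X_{m:m}>1\}]=\oh(m)$ for $\theta<1/\gamma$, followed by Jensen's inequality. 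Your plan names the right ingredients (small spacings via~\eqref{conditie:CDF}, left tail via the moment hypothesis) but the connection between them and a polynomial-in-$m$ bound is not spelled out, and the specific inequalities you state are not correct as written.
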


The finite-sample performance of $\UP$ as an estimator of $\gamma$ will be explored in Section~\ref{simulation} by a simulation study. For calculation purposes, it is convenient to express $\UP$ explicitly as a linear combination of the log-spacings.

\begin{lemma}
	\label{weights_estimator}
	The U-statistic $\UP$ based on a random sample $X_1,\ldots,X_n$ can be written in terms of the order statistics as
	\begin{equation}
		\label{eq:Un*mexplicit}
		\UP = \sum_{j=2}^{n-m+3}
		\frac{\binom{n-j}{m-3}}{\binom{n}{m}} 
		\left( 2 \frac{n-j+1}{m-2} - j \right) 
		\sum_{i=1}^{j-1} \ln(X_{n-i+1:n} - X_{n-j+1:n}).
	\end{equation}
\end{lemma}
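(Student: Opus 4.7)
The plan is to expand the Pickands kernel on a block $I$ into three separate log-spacings and count, for each pair of global order statistics, in how many blocks $I$ that pair appears as the relevant pair in each of the three terms.

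First, I would use the additive form
\[
\KP(y_1,y_2,y_3)=2\ln(y_1-y_2)-\ln(y_1-y_3)-\ln(y_2-y_3),
\]
write $Y_i:=X_{n-i+1:n}$ for the decreasingly ordered sample, and reindex each block $I$ of size $m$ by the ranks $i_1<i_2<i_3$ (from the top) of the three largest observations contained in $I$. Since the remaining $m-3$ indices in $I$ must be strictly larger than $i_3$, the number of size-$m$ subsets with this top-three configuration is $\binom{n-i_3}{m-3}$. Thus
\[
\binom{n}{m}\UP=\sum_{1\le i_1<i_2<i_3\le n}\binom{n-i_3}{m-3}\bigl[2\ln(Y_{i_1}\!-Y_{i_2})-\ln(Y_{i_1}\!-Y_{i_3})-\ln(Y_{i_2}\!-Y_{i_3})\bigr].
\]

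Next I would swap the order of summation and compute the coefficient of $\ln(Y_a-Y_b)$ for each pair $1\le a<b\le n$. The pair $(a,b)$ arises as $(i_1,i_2)$ when $i_3$ ranges over $\{b+1,\dots,n\}$; as $(i_1,i_3)$ when $i_2$ ranges over $\{a+1,\dots,b-1\}$; and as $(i_2,i_3)$ when $i_1$ ranges over $\{1,\dots,a-1\}$. Applying the hockey-stick identity
\[
\sum_{i_3=b+1}^{n}\binom{n-i_3}{m-3}=\sum_{k=0}^{n-b-1}\binom{k}{m-3}=\binom{n-b}{m-2},
\]
the coefficient of $\ln(Y_a-Y_b)$ becomes
\[
2\binom{n-b}{m-2}-(b-a-1)\binom{n-b}{m-3}-(a-1)\binom{n-b}{m-3}=2\binom{n-b}{m-2}-(b-2)\binom{n-b}{m-3}.
\]
A crucial point here is that this coefficient depends on $a$ only through the constraint $a<b$, so summing over $a\in\{1,\dots,b-1\}$ gives the claimed form.

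Finally, I would simplify using the identity $\binom{n-b}{m-2}=\binom{n-b}{m-3}\cdot\tfrac{n-b-m+3}{m-2}$, which yields
\[
2\binom{n-b}{m-2}-(b-2)\binom{n-b}{m-3}=\binom{n-b}{m-3}\Bigl(\tfrac{2(n-b+1)}{m-2}-b\Bigr),
\]
after combining the additive $+2$ with the $\tfrac{2(n-b-m+3)}{m-2}$ term. Dividing by $\binom{n}{m}$ and relabelling $b\mapsto j$ reproduces \eqref{eq:Un*mexplicit}; the upper limit $j\le n-m+3$ comes from the requirement $\binom{n-j}{m-3}\neq0$. The main (and only) obstacle is the bookkeeping of the three cases and the algebraic reconciliation of $2(n-b-m+3)/(m-2)-(b-2)$ with $2(n-b+1)/(m-2)-b$; everything else is a straightforward application of the hockey-stick identity.
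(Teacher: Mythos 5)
Your proof is correct and follows essentially the same approach as the paper: count, for each ordered pair of global ranks $(a,b)$ with $a<b$, how many size-$m$ blocks have $X_{n-a+1:n}$ and $X_{n-b+1:n}$ as (1st, 2nd), (1st, 3rd), or (2nd, 3rd) largest, and collect coefficients. The only cosmetic difference is that you derive the count $\binom{n-b}{m-2}$ for the (1st, 2nd) case by summing $\binom{n-i_3}{m-3}$ over $i_3>b$ and applying the hockey-stick identity, whereas the paper reads off $\binom{n-j}{m-2}$ directly as the number of ways to pick the $m-2$ remaining indices below rank $j$; both lead to the same intermediate coefficient $2\binom{n-b}{m-2}-(b-2)\binom{n-b}{m-3}$ and the same final simplification.
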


For given $n$ and $m$, the coefficients on the right-hand side of \eqref{eq:Un*mexplicit} lend themselves well for recursive calculation, without the need of invoking factorials:
\begin{align*}
	\frac{\binom{n-2}{m-3}}{\binom{n}{m}}
	&= \frac{m(m-1)(m-2)}{n(n-1)(n-m+1)}, \\
	\frac{\binom{n-j}{m-3}}{\binom{n}{m}}
	&= \frac{\binom{n-(j-1)}{m-3}}{\binom{n}{m}} \cdot \frac{n-j-m+4}{n-j+1}, \qquad j \in [3{:}(n-m+3)].
\end{align*}

\section{Finite-sample performance}
\label{simulation}
We compare the performance of the extreme U-Pickands estimator $\UP$ in \eqref{eq:Un*m} of $\gamma \in \Real $ to the location-scale invariant GP Maximum Likelihood (ML) estimator \citep{davison1990models} based on the $k$ excesses over the $(k+1)$-largest order statistic. While the latter is also considered an estimator of $\gamma \in \reals$, the algorithm to compute the ML estimator is theoretically constrained to $\gamma >-1$ \citep{grimshaw1993computing}, see also \citep{zhou2009existence}.
We simulate independent random samples of size $n$ from a number of distributions and for each sample we apply both estimators of $\gamma$ over a range of block sizes $m$ and threshold parameters $k$. To facilitate comparison between the two estimators we fix the number of tail observations to be used in GP ML estimation to $k = 3n/m$, where $m$ is the block size of the kernel $\KPm$. The motivation for this choice is as follows: theoretically, for estimators using the top-$3$ observations of a disjoint partitioning of the dataset in blocks of size $m$, the number of data points used is equal to $3n/m$ too; moreover, when $m=3$, both $\UP$ and the GP ML estimator use all data.

In Section~\ref{sec:simu:ABM}, we also compare the extreme U-Pickands estimator with the All Block Maxima (ABM) estimator in \cite{oorschot2020all}. This estimator extracts the maximum out of every block of size $m$ and then fits a two-parameter Fr\'echet distribution to the sample of all block maxima thus obtained; the two parameters are the shape parameter $\gamma > 0$ and the scale parameter $\sigma > 0$. The likelihood equation~(2.3) in that paper is a scale-invariant function of the block maxima. Although the estimator of $\gamma$ is not an extreme U-statistic in the sense of our paper, it is still close in spirit to it with $q = 1$.

\subsection{Samples from a GP distribution}

We start by comparing the MSE of the GP ML estimator to $\UP$ in the case that the data are sampled from the GP distribution with shape parameter $\gamma \in \{-0.5, 0, 0.5\}$. Effectively, the comparison will be on the basis of estimator variance, since, in the GP case, $\UP$ is unbiased and the GP ML estimator is asymptotically unbiased.

\begin{figure}[h]
	\caption{MSE comparison of $\UP$ and the GP ML estimator for $100$ samples of size $n=10\,000$ from $\GP(\gamma)$}
	\label{GP_MSE_plots}
	\subfloat[GP($\gamma=0.5$)]{\includegraphics[width=0.475\textwidth]{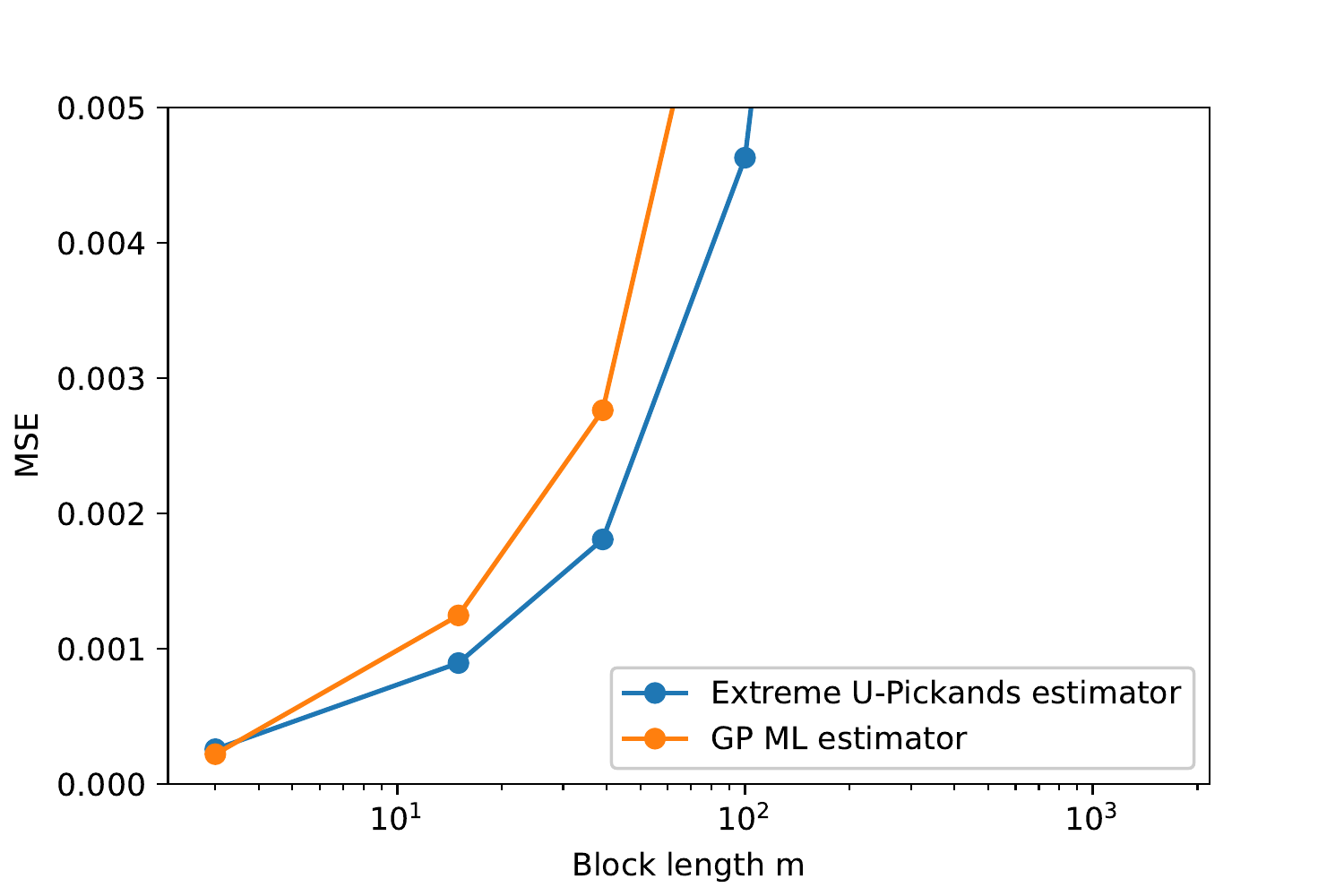}}
	\subfloat[GP($\gamma=0$)]{\includegraphics[width=0.475\textwidth]{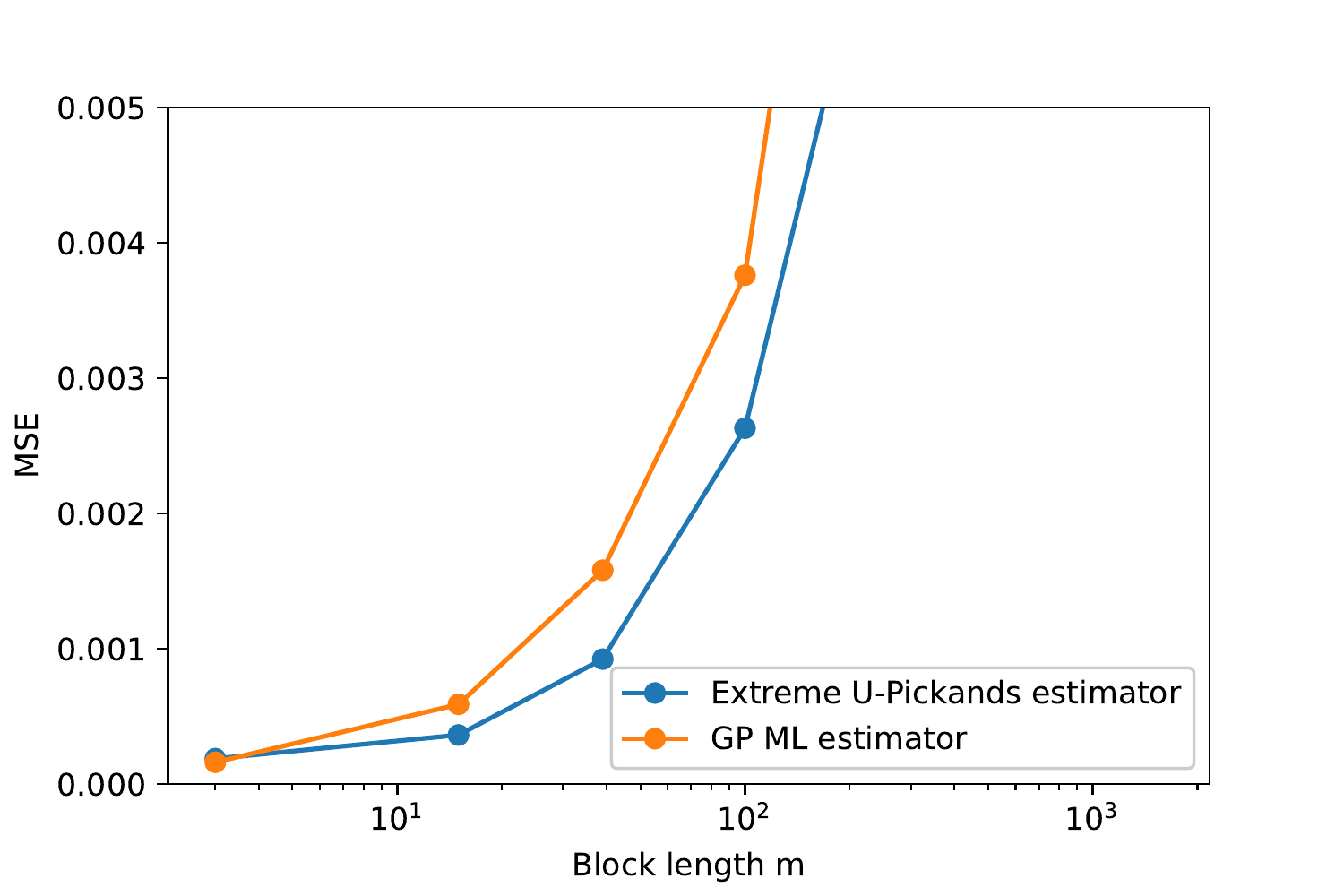}}\\
	\subfloat[GP($\gamma=-0.5$)]{\includegraphics[width=0.475\textwidth]{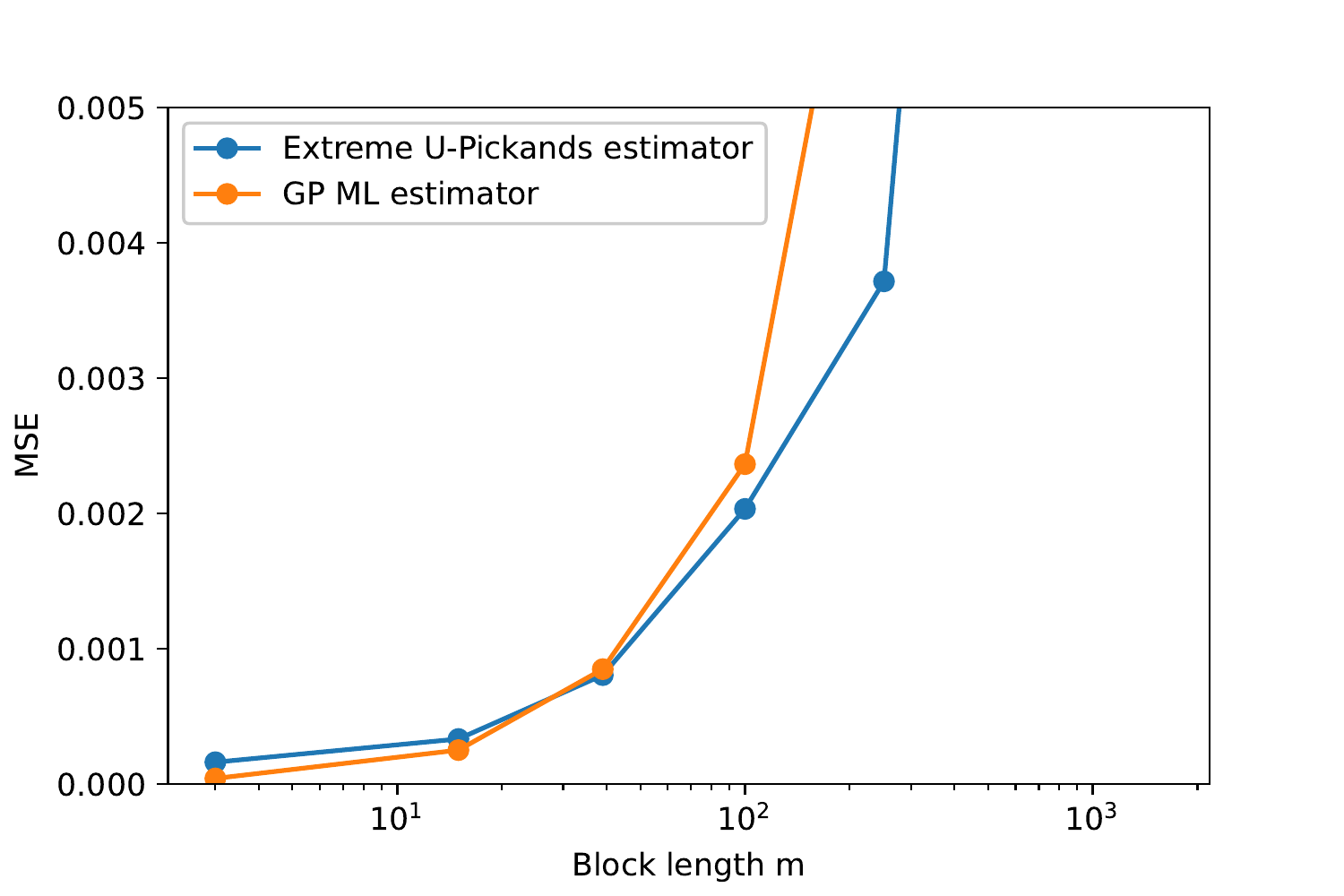}}
\end{figure}

Figure \ref{GP_MSE_plots} shows that the MSE for both estimators is minimal when we use all available observations in estimation (block length $m=3$). 
Moreover, the MSE of $\UP$ is significantly lower than that of the GP ML estimator over the largest range of the block length $m$, except when $\gamma=-0.5$. The apparent discrepancy between the two cases $\gamma \in \{0, 0.5\}$ on the one hand and the case $\gamma=-0.5$ on the other hand can be understood by a comparison of the asymptotic variance.

Recall that the GP ML estimator has asymptotic variance $(1+\gamma)^2$, provided $\gamma> -1/2$ \citep{drees2004maximum}. For the asymptotic variance of $\UP$, we fix a grid of 50 equally spaced values of $\gamma$ between $-1$ and $1$ and approximate $\sigma^2_{\KP}(\gamma)$ by the sample variance of $\UP$ based on $1000$ $\GP(\gamma)$ samples of size $n=10\,000$, multiplied with $k=n/m=100$. The normalized asymptotic variance of the GP ML estimator $(1+\gamma)^2/3$ (like before, we divide by three to facilitate comparison) together with the estimated asymptotic variance $\hat{\sigma}^2_{\KP}(\gamma)$ corresponding to $\UP$ are depicted in Figure~\ref{asymptotic_variance}.
The asymptotic variances intersect around $\gamma \approx -0.25$, with $\sigma^2_{\KP}(\gamma)$ indeed being higher for lower values of $\gamma$ but lower for all higher values of $\gamma$. In this context it is possible for the asymptotic variance $\sigma^2_{\KP}( \gamma)$ to be lower than the one of the GP ML estimator because
$\UP$ uses more observations than the GP ML estimator. For completeness the approximate values of $\sigma^2_{\KP}( \gamma)$ for $\gamma \in [-1,1]$ can be found in Table~\ref{asymptotic_variance_table}. These can be used for constructing asymptotic confidence intervals when using the extreme U-Pickands estimator, at least when the bias term is negligible.

\begin{figure}
	\caption{Normalized asymptotic variance $\sigma_{\KP}^2(\gamma)$ of $\UP$ and $(1+\gamma)^2/3$, corresponding to the one of the GP ML estimator}
	\label{asymptotic_variance}
	\centering
	\includegraphics[height=0.25\textheight, width=0.5\textwidth]{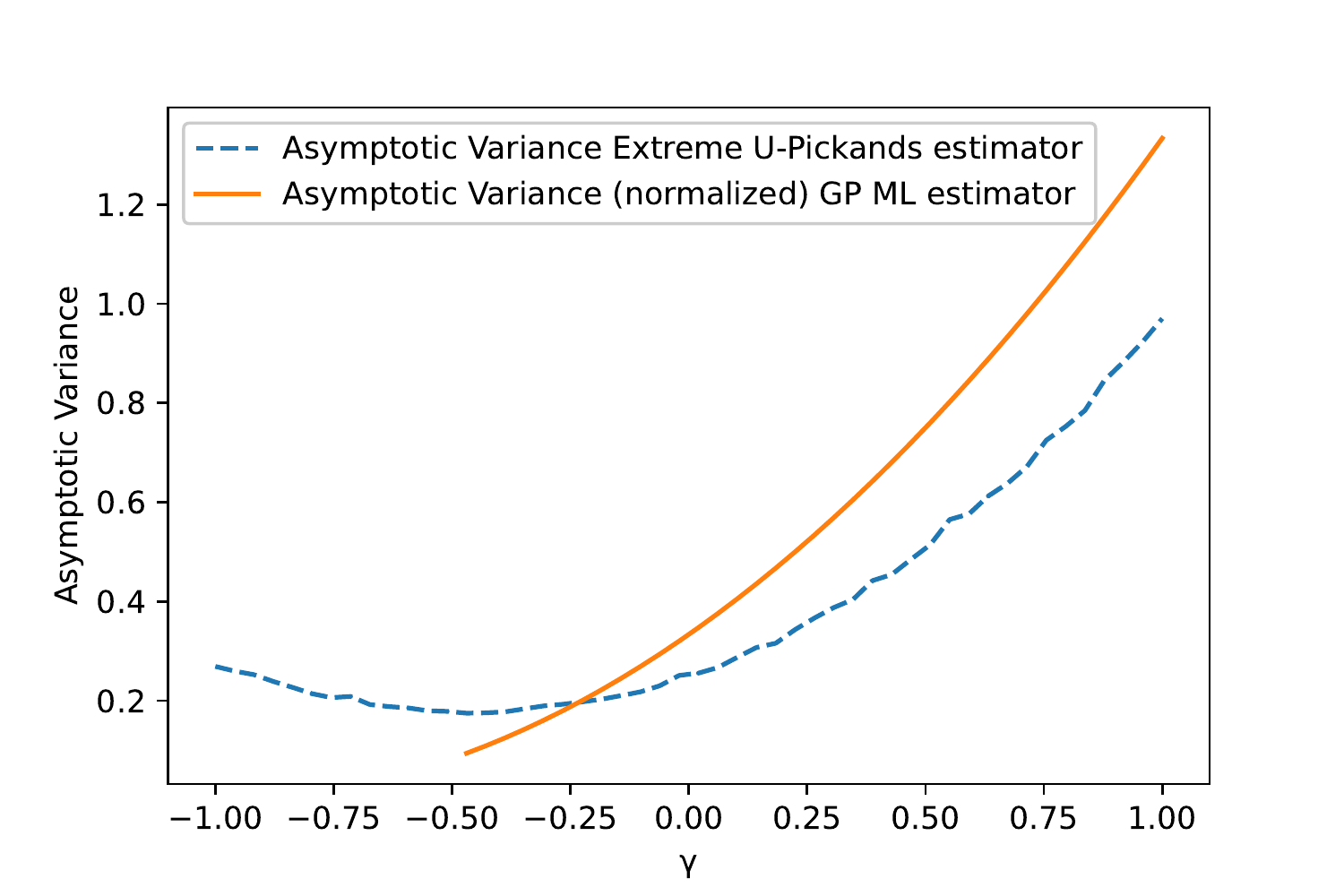}
\end{figure} 

\begin{table}
	\caption{Asymptotic variance $\sigma^2_{\KP}(\gamma)$ for $\gamma \in [-1,1]$ }\label{asymptotic_variance_table}
	\begin{adjustbox}{width=\columnwidth,center}
		\begin{tabular}{r  r | r  r| r r | r r | r r}
			\toprule
			$\gamma$ & $\sigma^2_{\KP}(\gamma)$ & $\gamma$ & $\sigma^2_{\KP}(\gamma)$ & $\gamma$ & $\sigma^2_{\KP}(\gamma)$ & $\gamma$ & $\sigma^2_{\KP}(\gamma)$ & $\gamma$ & $\sigma^2_{\KP}(\gamma)$\\
			\midrule
			$-1.000$ & 0.269 &  $-0.959$ & 0.259 & $-0.918$ &0.252 & $-0.878$ & 0.239 & $-0.837$ &0.227 \\
			$^-0.796$ &0.214 & $-0.755$ & 0.206 &  $-0.714$ & 0.208 & $-0.673$&0.192 &  $-0.633$ & 0.188  \\
			$-0.592$ & 0.185 &  $-0.551$ &0.179 & $-0.510$ &0.178 & $-0.469$ &0.175 & $-0.429$ & 0.175 \\
			$-0.388$ &0.177 & $-0.347$ & 0.183 &  $-0.306$ & 0.189& $-0.265$&0.193 &  $-0.224$ & 0.198  \\
			$-0.184$ & 0.203 &  $-0.143$ & 0.210 & $-0.102$ & 0.218 & $-0.061$ & 0.230 & $-0.020$ & 0.251 \\
			$0.020$ & 0.255 & $0.061$ &0.267 &  $0.102$ & 0.287 & $0.143$&0.307 &  $0.184$ & 0.316  \\
			$0.225$ & 0.343 &  $0.265$ & 0.366 & $0.306$ & 0.387 & $0.347$ & 0.404 & $0.388$ & 0.442 \\
			$0.429$ &0.454 & $0.469$ & 0.485 &  $0.510$ & 0.514 & $0.551$& 0.565 &  $0.592$ & 0.576  \\
			$0.633$ &0.612 &  $0.674$ &0.638 & $0.714$ & 0.672 & $0.755$ &0.725 & $0.796$ & 0.752 \\
			$0.837$ &0.785 & $0.878$ &0.845 &  $0.918$ & 0.883 & $0.959$&0.924 &  $1.000$ & 0.970  \\
			\bottomrule
		\end{tabular}
	\end{adjustbox}
\end{table}
\clearpage

\subsection{Random variables in the DoA of a GEV distribution}

Next we consider data drawn from a distribution function in the DoA of a GEV distribution. The optimal block length $m$ in a MSE sense will then not be equal to three, but will in general be a higher value that better balances bias and variance.

In Figure~\ref{MSE_DoA} we illustrate the relative performance of $\UP$ in terms of MSE for a couple of well known distributions.
The conclusions from the previous section remain; the minimal MSE of $\UP$ is competitive with that of the GP ML estimator (sometimes even lower) and the performance of $\UP$ is usually better over a large range of block lengths.

\begin{figure}
	\caption{MSE comparison of $\UP$ and the GP ML estimator at $k = 3n/m$ for $100$ samples of size $n=10\,000$ from the stated distributions, with extreme value index $\gamma \in \reals$ and second-order parameter $\rho \leq 0$ (Condition~\ref{derivative_tail_quantile}) as indicated}
	\label{MSE_DoA}
	\subfloat[Student-t($2$), $\gamma=0.5$, $\rho=-1$]{\includegraphics[height=0.25\textheight, width=0.5\textwidth]{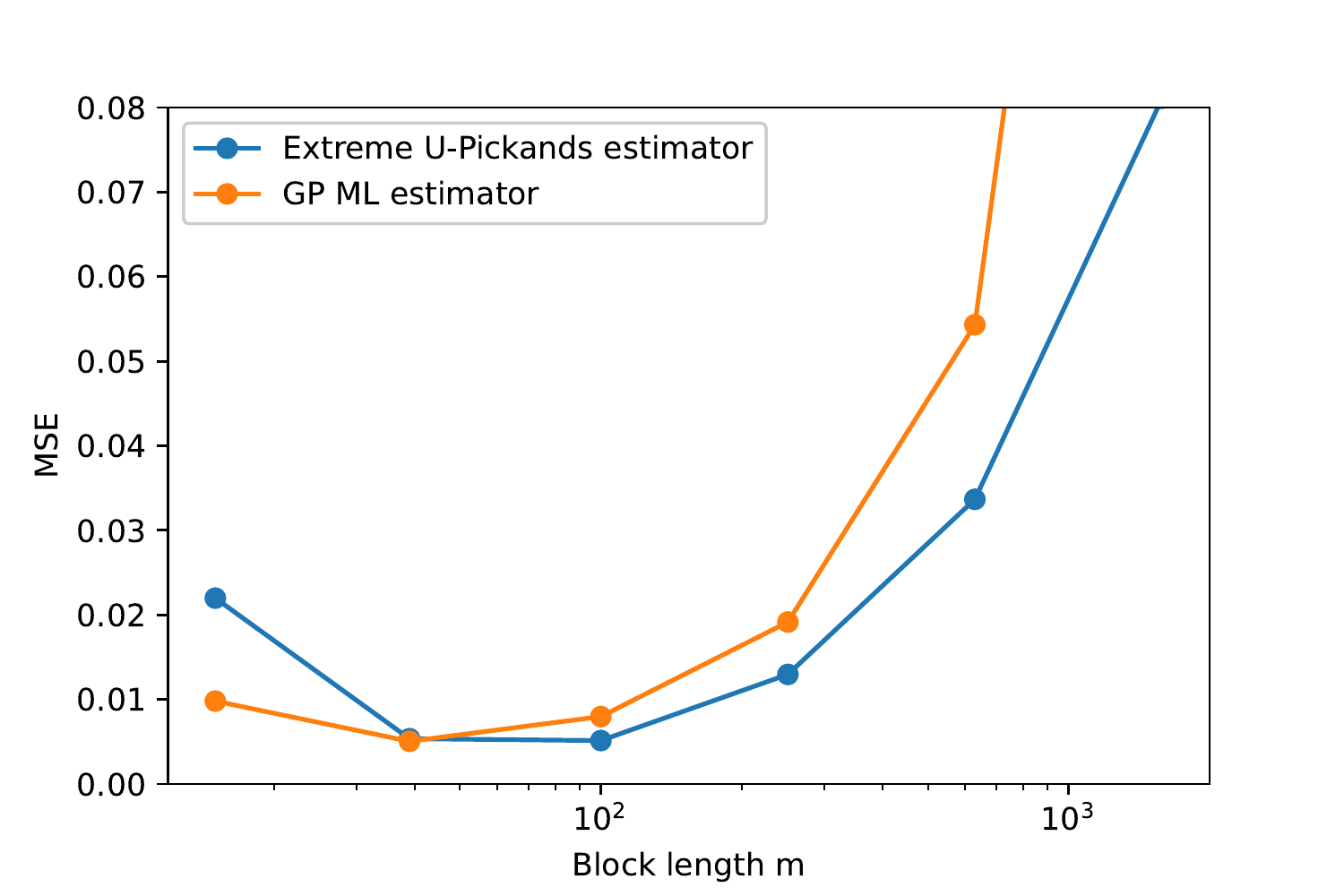}}
	\subfloat[Normal, $\gamma=0$, $\rho=0$]{\includegraphics[height=0.25\textheight, width=0.5\textwidth]{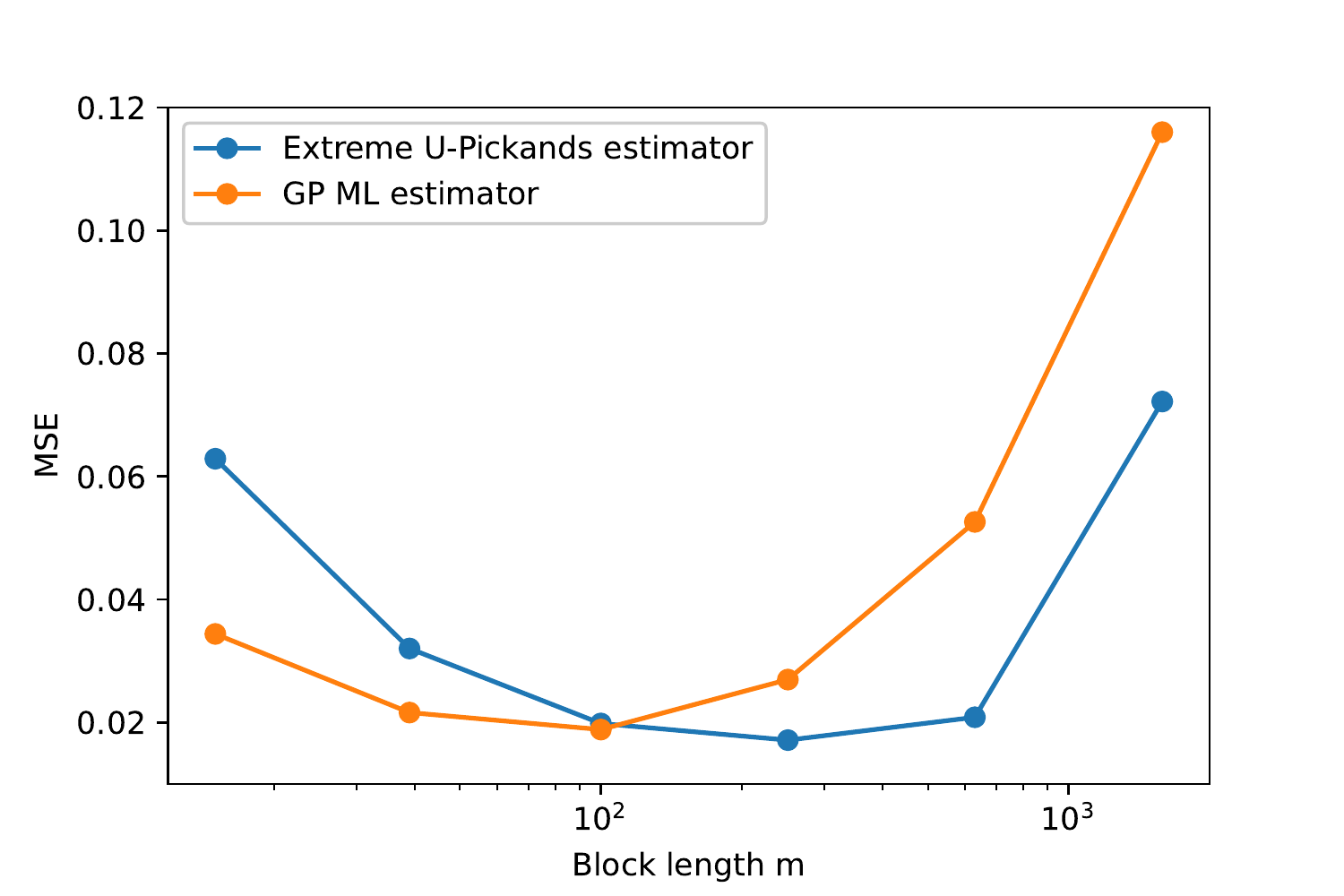}}\\
	\subfloat[Beta($2,2$), $\gamma=-0.5$ ]{\includegraphics[height=0.25\textheight, width=0.5\textwidth]{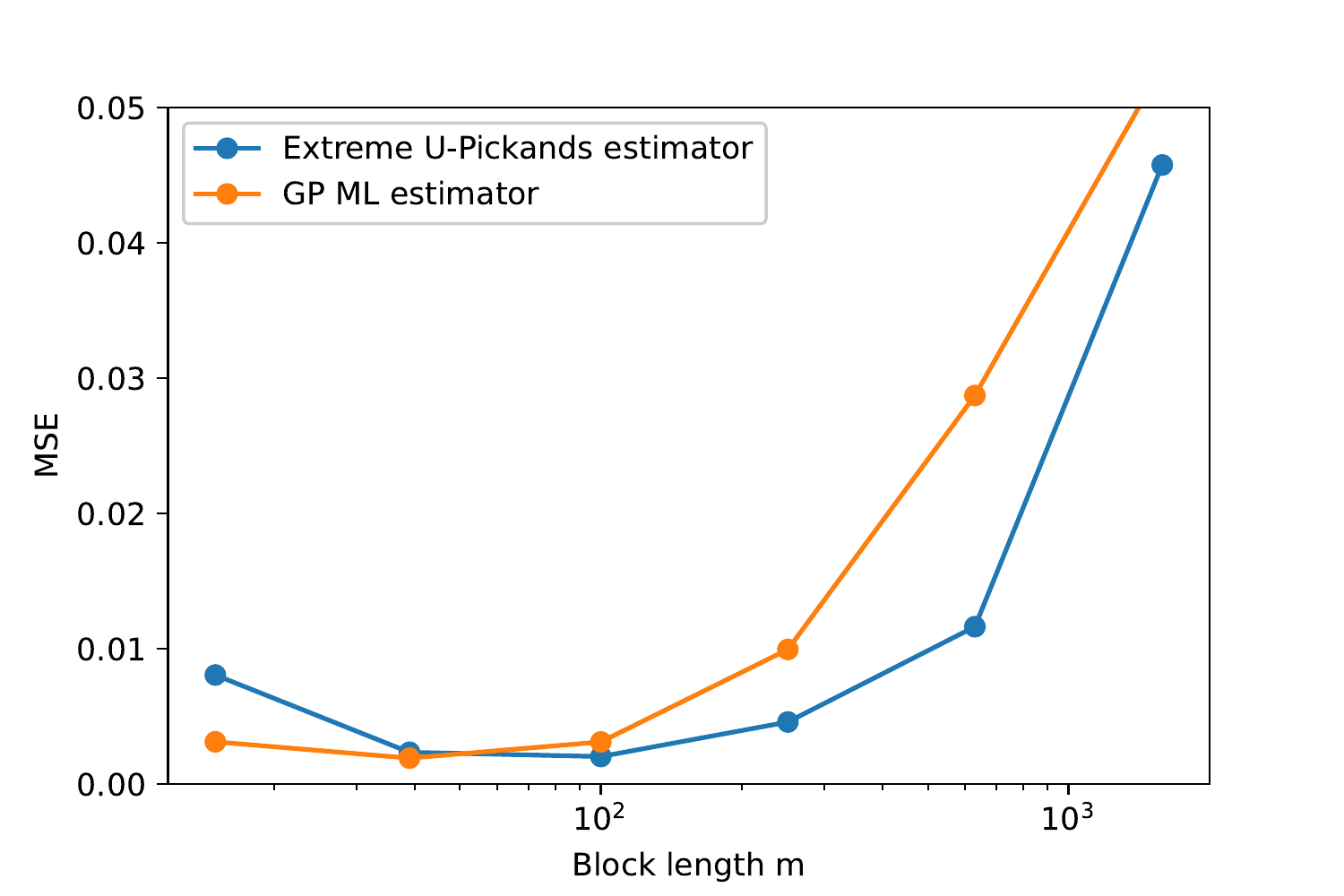}}
	\subfloat[Frech\'et, $\gamma=1/3$, $\rho=-1$]{\includegraphics[height=0.25\textheight, width=0.5\textwidth]{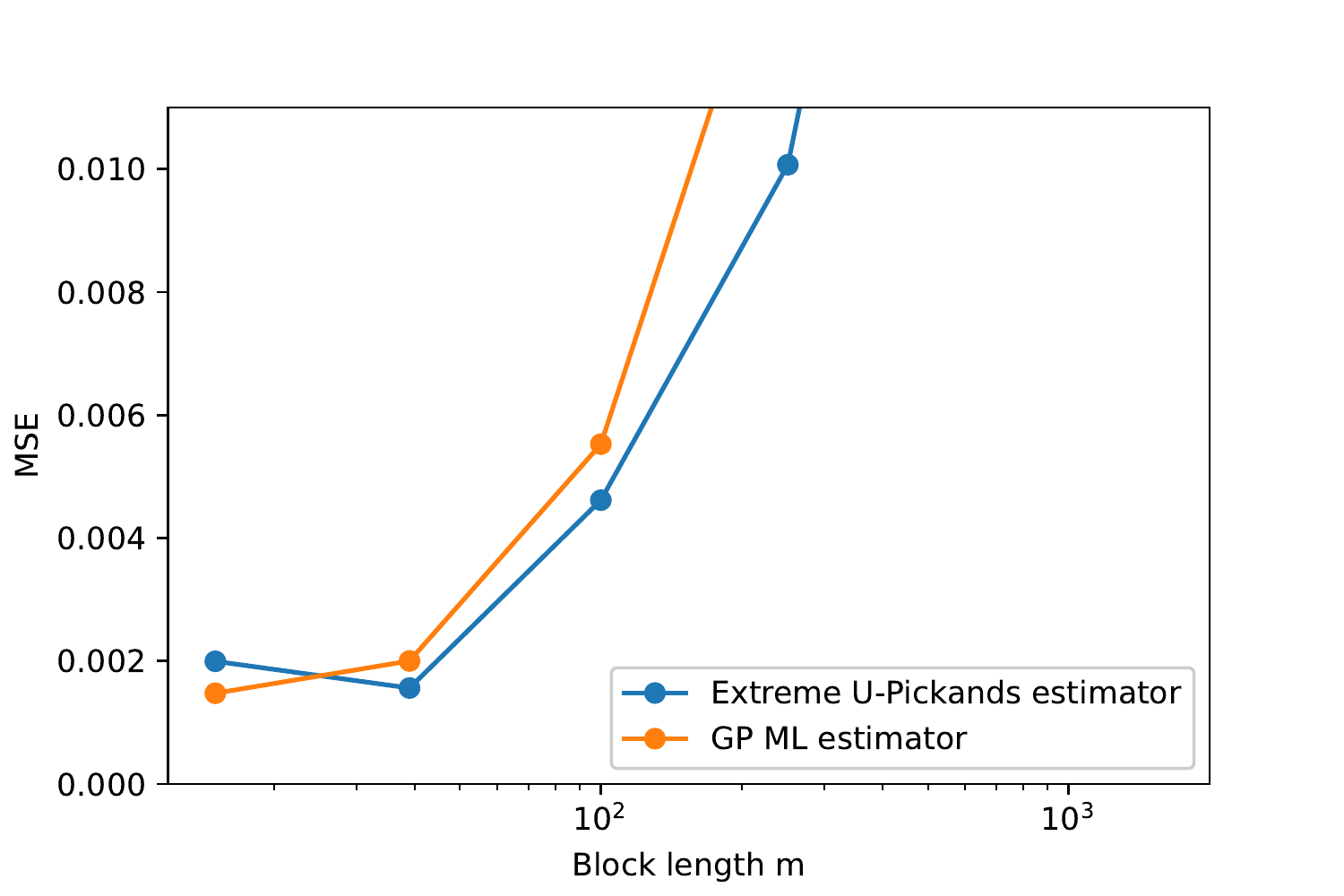}}
\end{figure}

MSE comparisons for observations from a distribution in the DoA of a GEV distribution effectively net out the competing effects of changes in estimator variance (depending on $\gamma$) to changes in estimator bias (depending on the second-order parameter $\rho$ as well as on $\gamma$). For many distributions, $\gamma$ and $\rho$ cannot be controlled separately. This is why we study the Burr distribution to disentangle the effects of $\rho$ and $\gamma$. The Burr($\lambda, \eta$) distribution has distribution function $F(x)=1-\left( 1/(1+x^{\eta}) \right)^{\lambda}$ for $\eta, \lambda, x>0$, with extreme value index $\gamma=(\lambda \eta)^{-1}$ and second-order parameter $\rho=-\lambda^{-1}$.

\begin{figure}
	\caption{Bias of $\UP$ and the GP ML estimator based on $1000$ samples of size $n=10\,000$ from the Burr distribution with stated extreme value index $\gamma$ and second-order parameter $\rho$. Panel~(b) provides a zoom of panel~(a) in the range $-1 \le \rho \le -0.25$.} \label{Bias_Burr}
	\subfloat[$\gamma=0.5$, $\rho \in {[-2, -0.25 ]}$ ]{\includegraphics[height=0.25\textheight, width=0.5\textwidth]{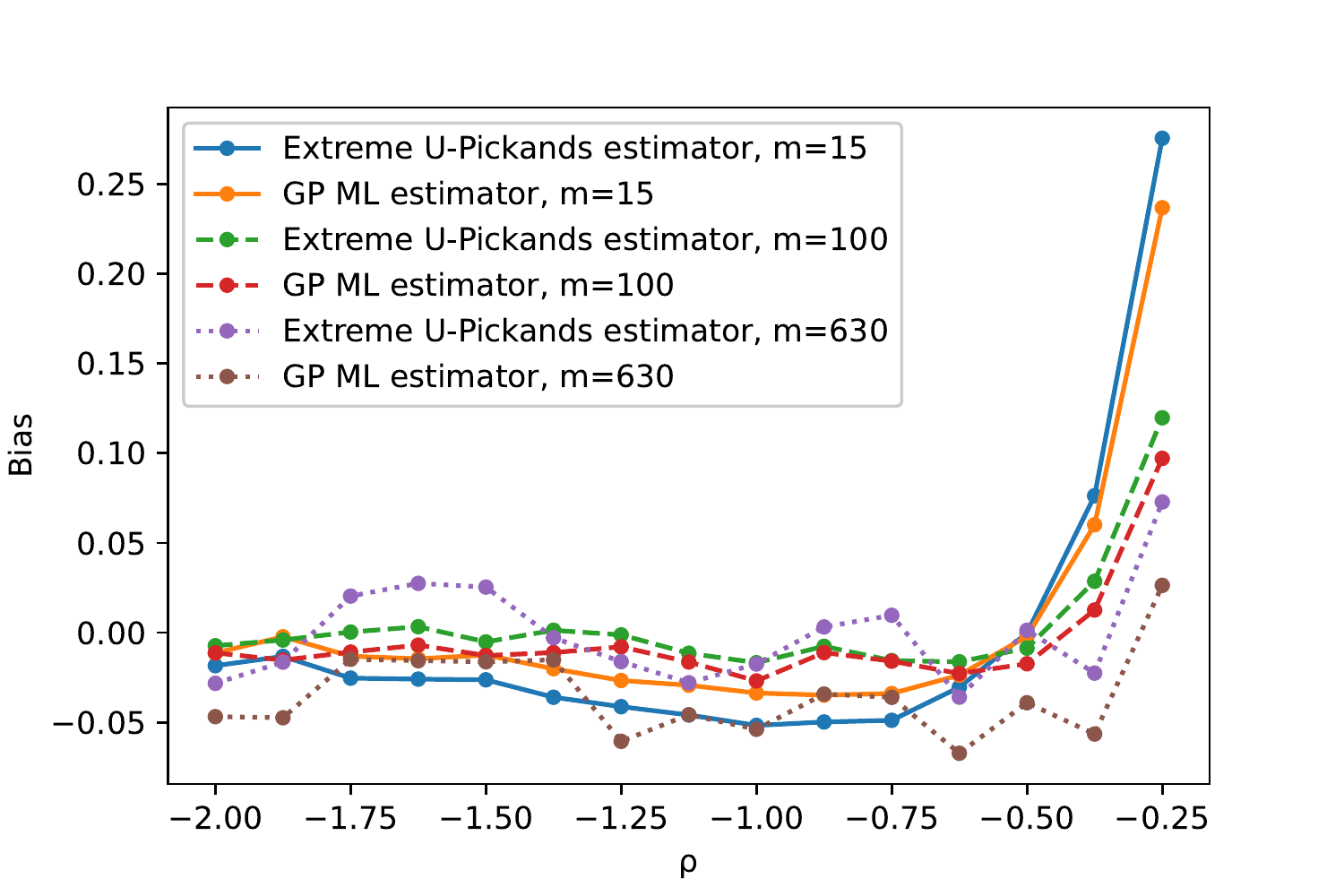}}
	\subfloat[$\gamma=0.5$, $\rho \in {[-1, -0.25 ]}$]{\includegraphics[height=0.25\textheight, width=0.5\textwidth]{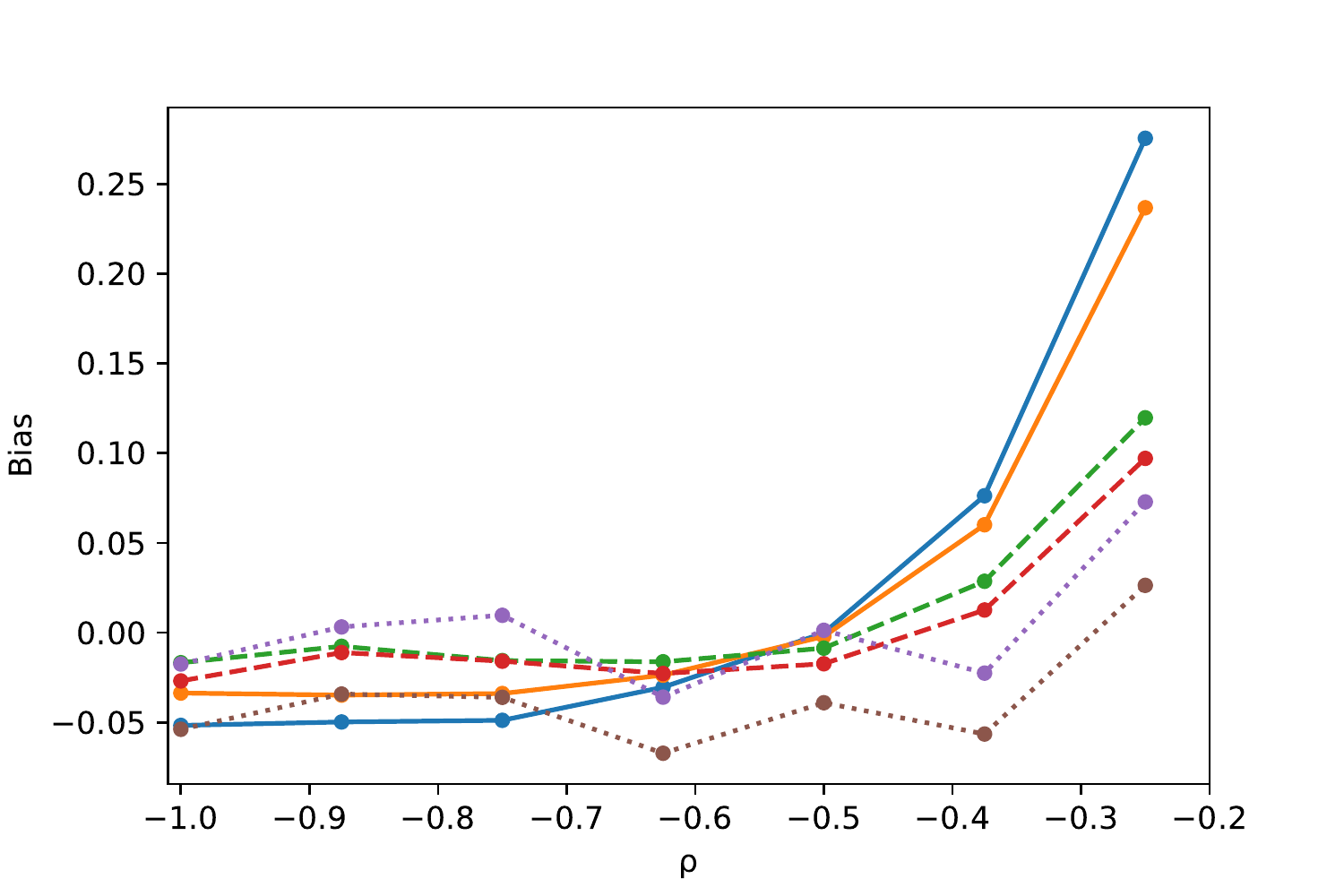}}
\end{figure}

Figure~\ref{Bias_Burr} depicts the bias for several choices of $m$ based on $1000$ simulated datasets of $n=10\,000$ observations from a Burr distribution where the parameters are chosen such that $\gamma=1/2$ is fixed as we vary $\rho$.
As expected, the bias of both estimators approaches $0$ as $\rho$ decreases and increases steeply as $\rho$ approaches $0$. While subplot~(a) of Figure~\ref{Bias_Burr} shows that the bias of $\UP$ is somewhat lower than that of the GP ML estimator when $\rho$ is very negative, subplot~(b) shows that when $\rho$ approaches $0$, the bias of $\UP$ is the higher one for all values of $m$. 

Finally, Figure \ref{single_trajectory} shows that a single trajectory of the estimator $\UP$ as a function of unit increments in $m$ is smooth, in contrast to the characteristically erratic ``Hill plot'' of the GP ML estimator as a function of the top-$k$ order statistics. 
Nonetheless, choosing the block size in practice remains a difficult task even when the single trajectory of the estimator $\UP$ is smooth.

\begin{figure}[h]
	\caption{Single trajectories of the estimators as a function of $m$ or $k$, based on a Student-t($4$) sample of size $n=10\,000$} \label{single_trajectory}
	\subfloat[$\UP$, $m \in {[3{:}10\,000]}$]{\includegraphics[height=0.25\textheight, width=0.5\textwidth]{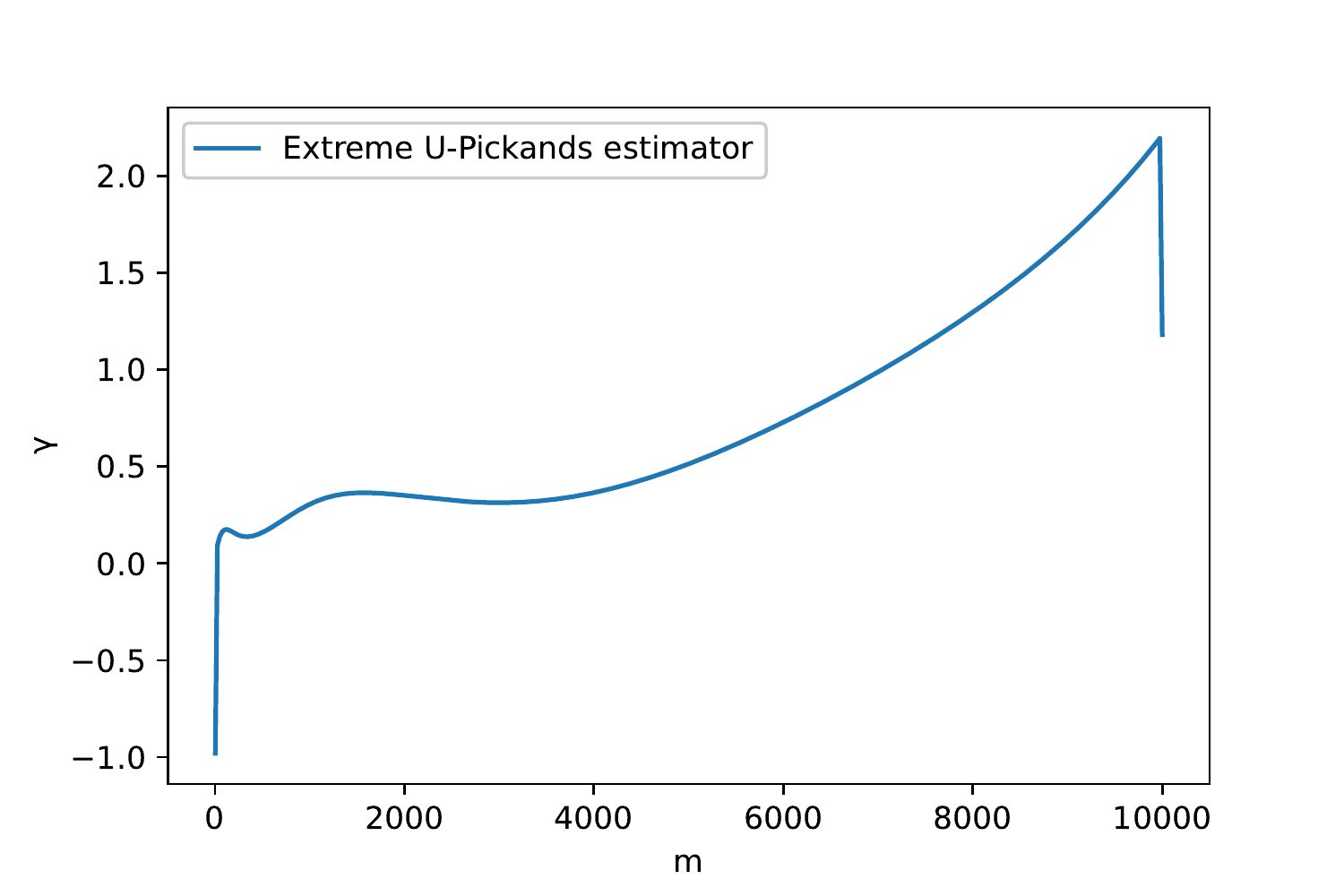}}
	\subfloat[GP ML estimator, $k \in {[15{:}2000]}$ ]{\includegraphics[height=0.25\textheight, width=0.5\textwidth]{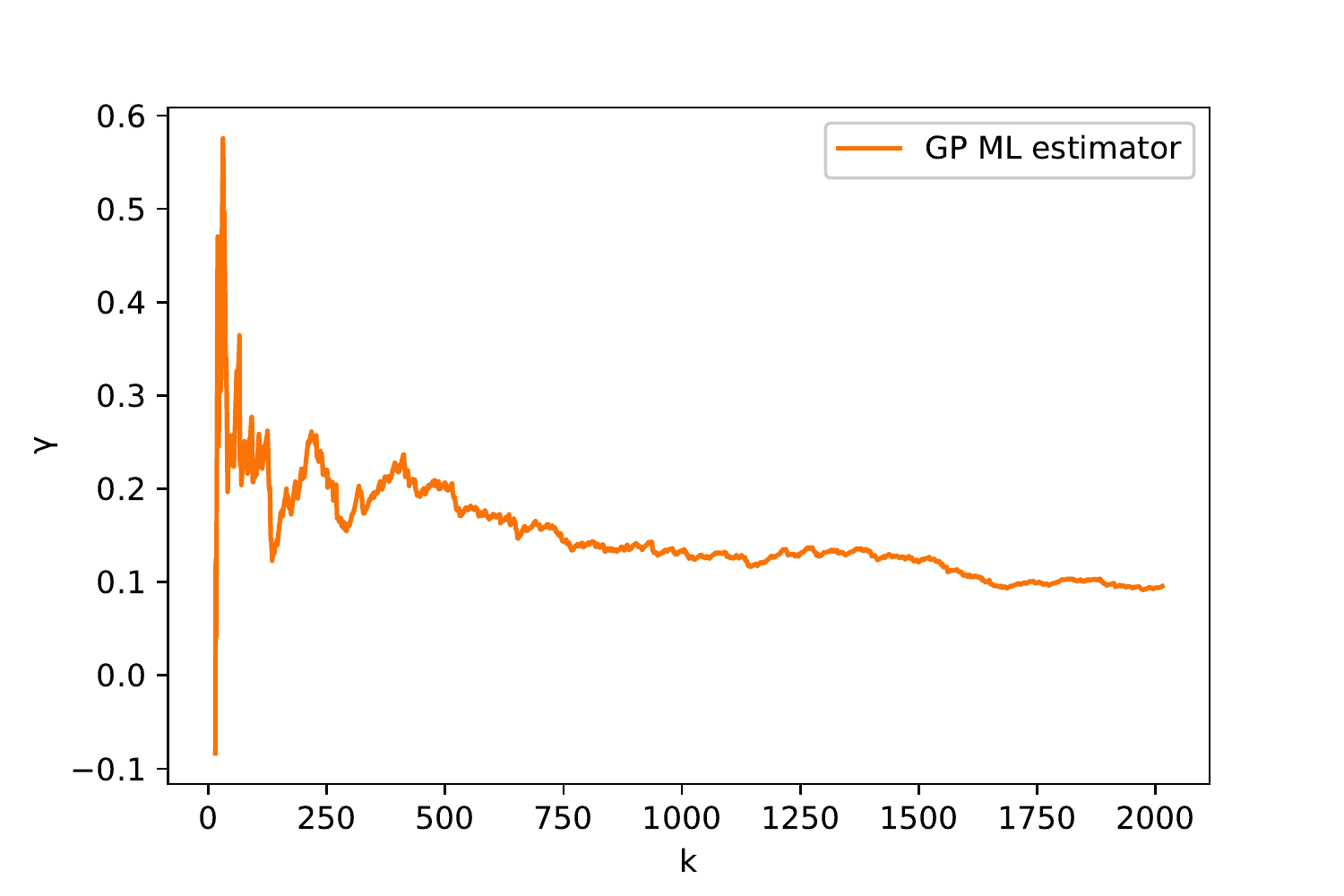}}
\end{figure}

\subsection{Comparison with ABM estimator}
\label{sec:simu:ABM}

We compare the extreme U-Pickands estimator $\UP$ of $\gamma$ to the ABM estimator in \cite{oorschot2020all}. The latter estimator is restricted to the range $\gamma > 0$ and is scale invariant but not location invariant (although it can be made location invariant for instance by first centering the data at the median).
Thinking in terms of disjoint blocks, the ABM estimator based on $m$-blocks uses $n/m$ sample points while $\UP$ uses $3 n/m = n / (m/3)$, three times as many. To ensure a level playing field, we compare the ABM estimator at block size $m$ with the extreme U-Pickands estimator at block size $3m$. In other words, we compare both estimators at equal values of $m/q$, where $m$ is the actual block size used for the estimator and where $q = 1$ for ABM while $q = 3$ for the extreme U-Pickands estimator.\footnote{Note that the recursive formula on page~5 of \cite{oorschot2020all} should be $p_i = p_{i-1} \frac{n-i-m+2}{n-i+1}$ for $i \in [2{:}(n-m+1)]$.} 

For each of a range of distributions, Figure~\ref{fig:ABM} shows the MSE over $100$ samples of size $n = 1\,000$. As the ABM estimator is designed as a quasi-MLE based on the Fr\'{e}chet distribution, its performance is superior for samples drawn from that distribution. For samples drawn from other distributions, the extreme U-Pickands estimator is competitive, despite its simple and explicit definition.

\begin{figure}
	\caption{Comparison of MSE  of ABM estimator \citep{oorschot2020all} at block size $m$ with that of $\UP$ at block size $3m$, based on 100 samples of size $n = 1\,000$ from a number of sampling distributions and for a range of $m$.}
	\label{fig:ABM}
	\subfloat[Fr\'{e}chet(1), $\gamma = 1$]{\includegraphics[width=.45\textwidth]{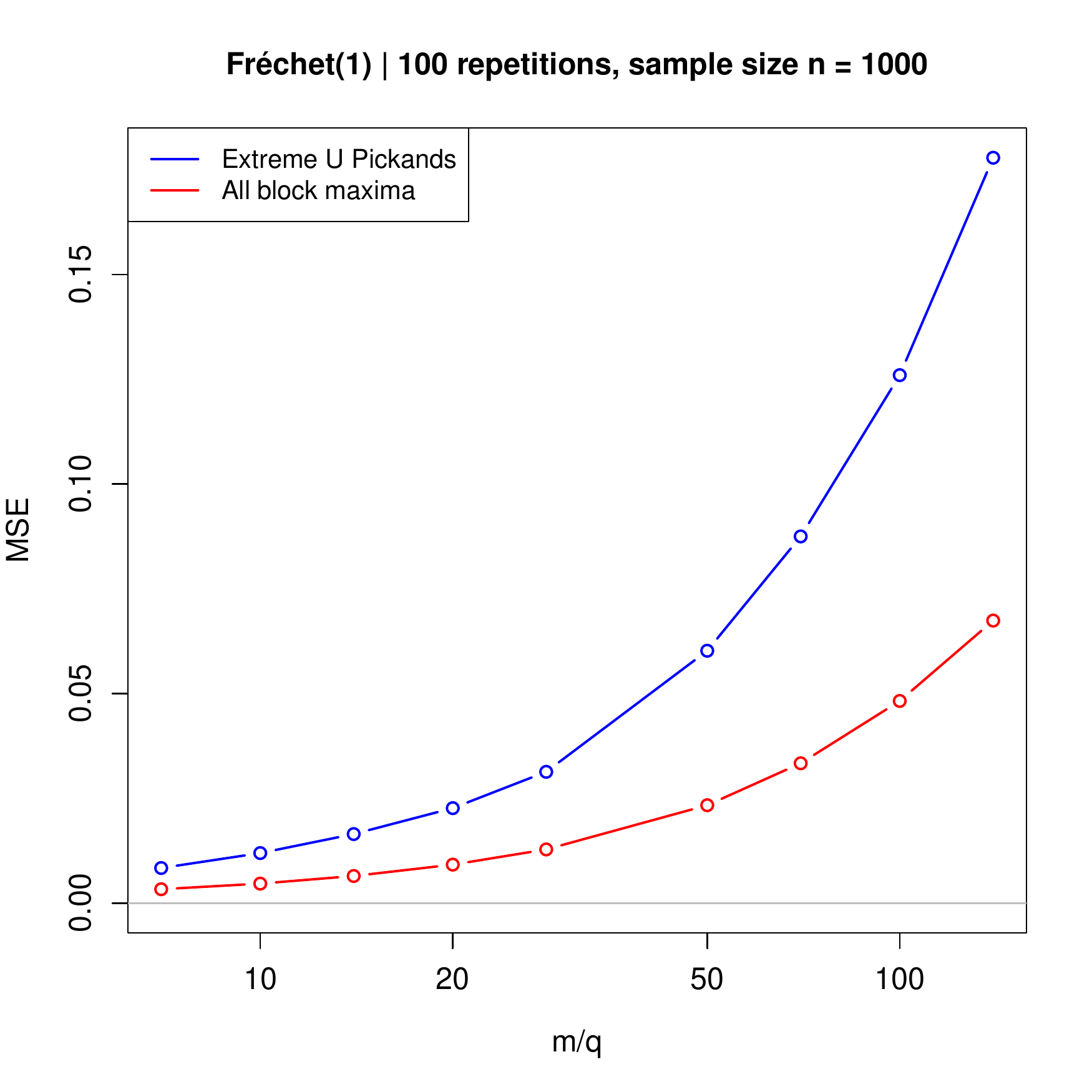}}
	\subfloat[Student(2), $\gamma = 0.5$]{\includegraphics[width=.45\textwidth]{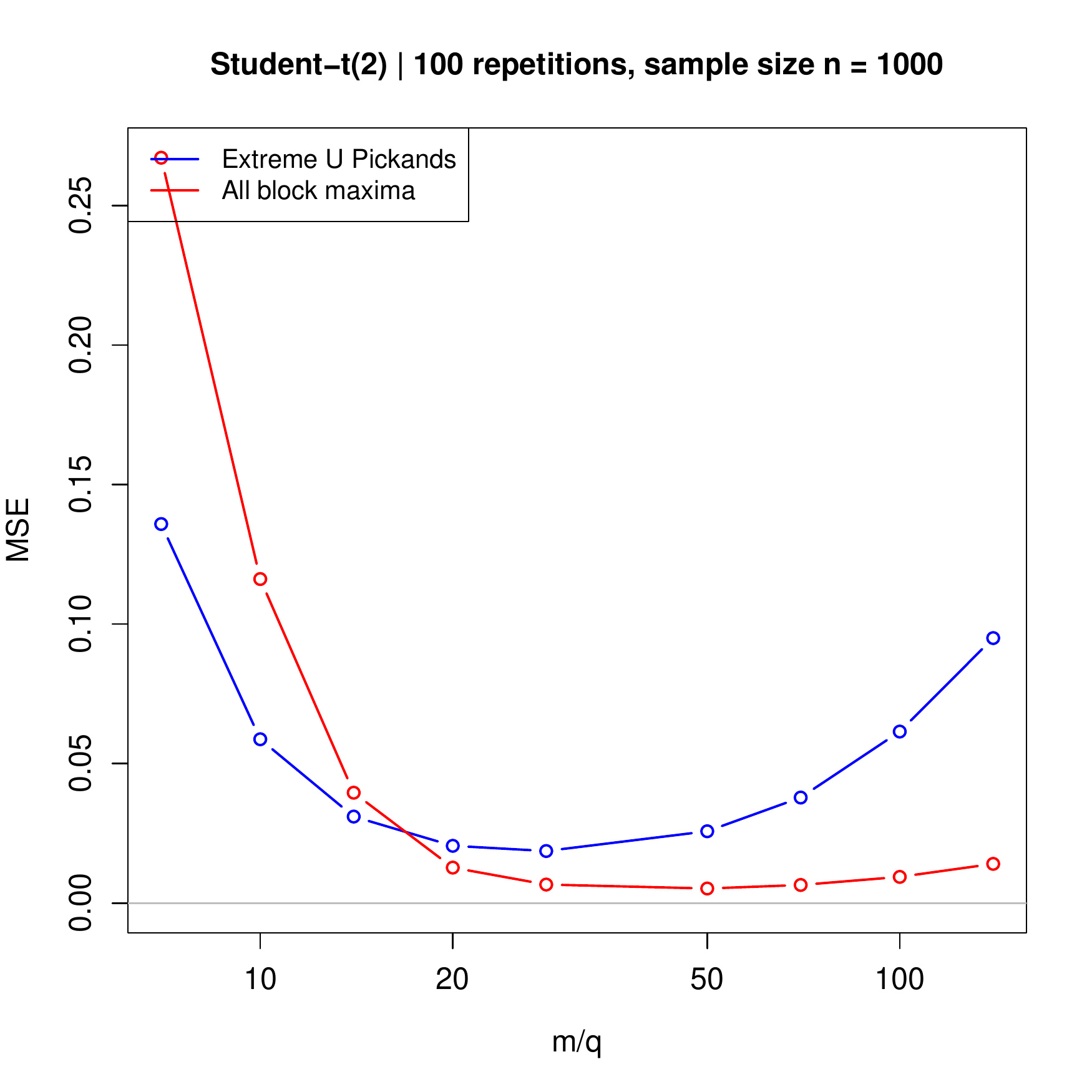}}\\
	\subfloat[GP(0.2), $\gamma = 0.2$]{\includegraphics[width=.45\textwidth]{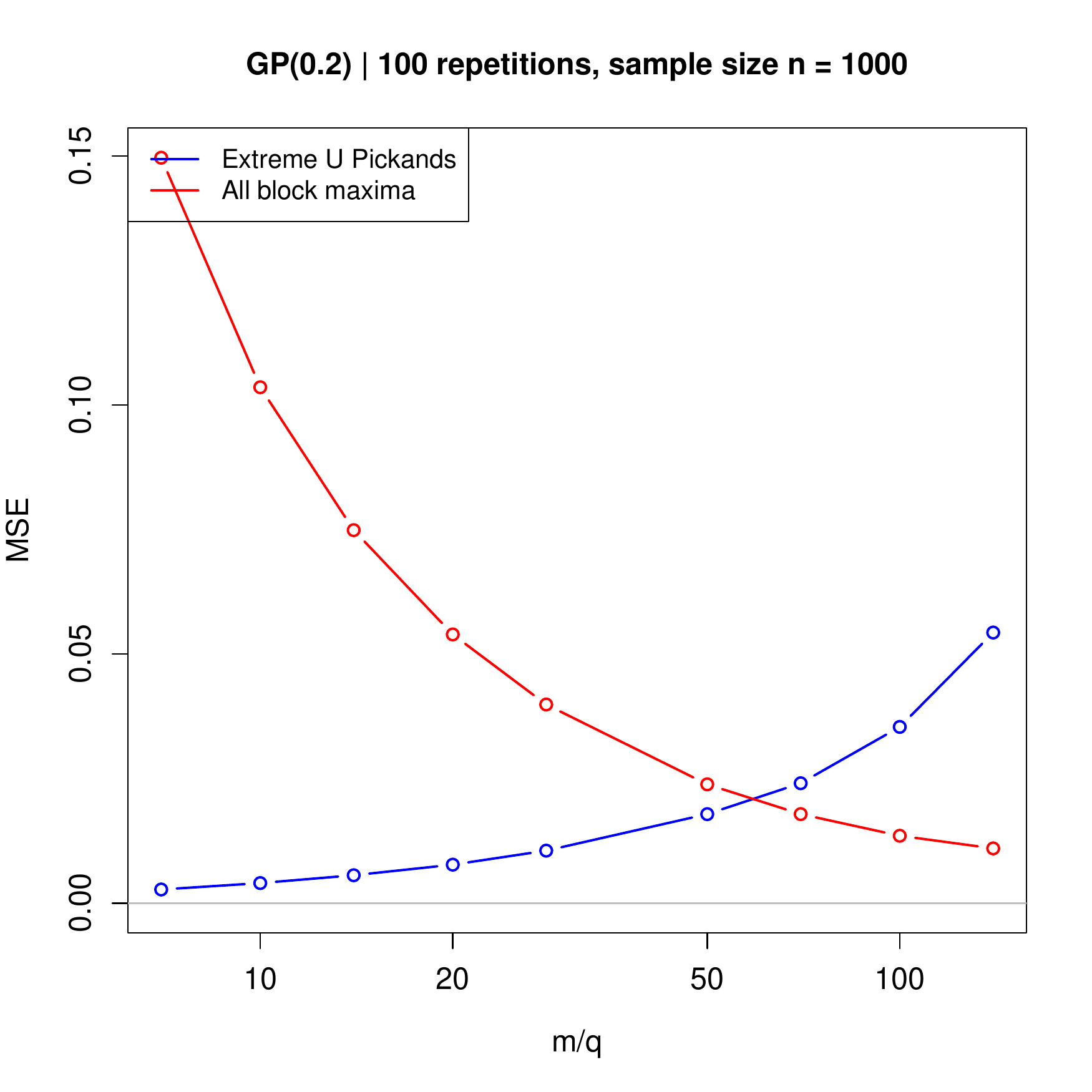}}
	\subfloat[Student(5), $\gamma = 0.2$]{\includegraphics[width=.45\textwidth]{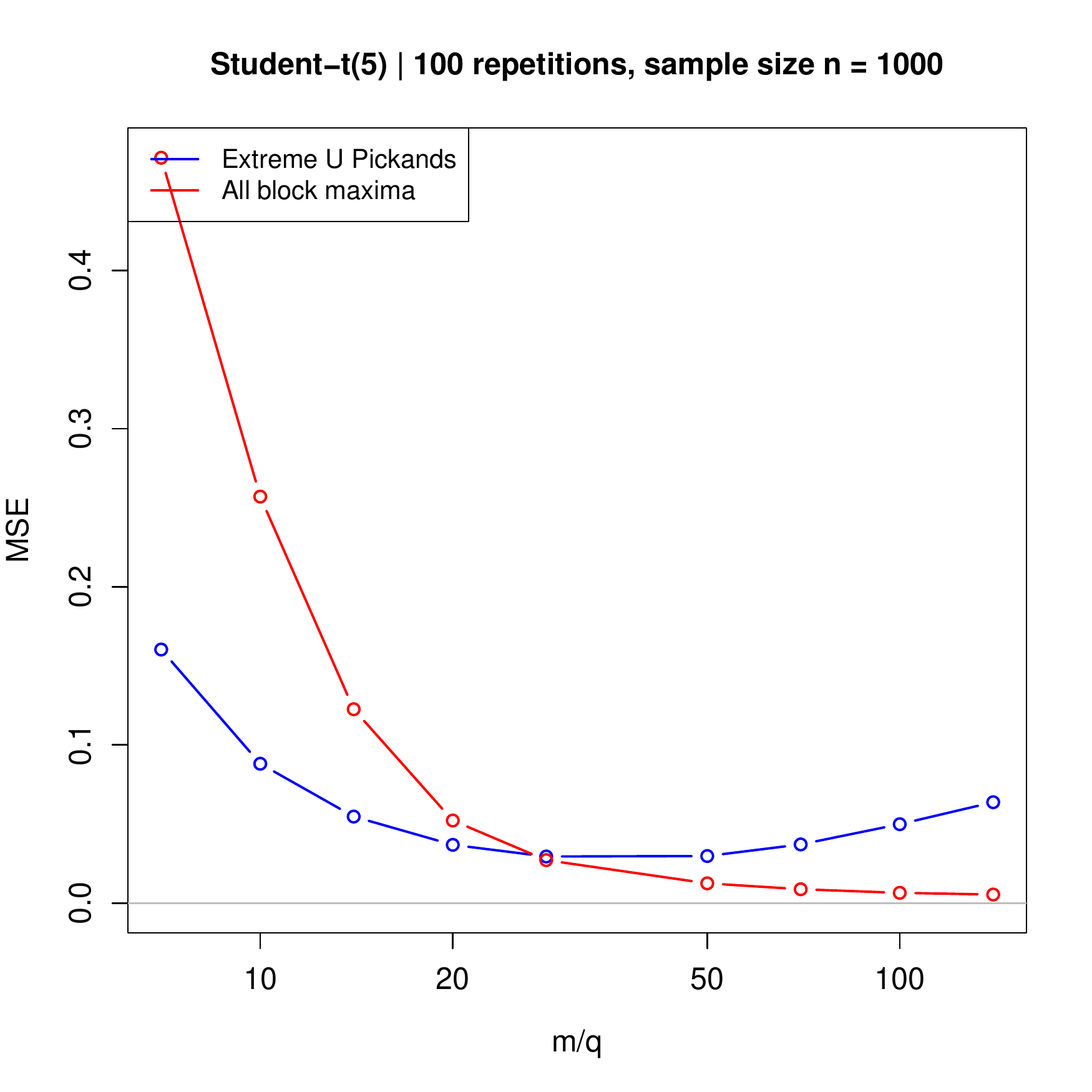}}
\end{figure}

\section{Discussion and open problems}
\label{sec:disc}
In Theorem \ref{main_GP_thm}, as well as elsewhere in the paper, we have assumed that the data are independently and identically distributed. Since $\gamma$ is a parameter belonging to the marginal distribution of the observations, we expect that Theorem \ref{main_GP_thm} can be adapted to the case where the $Z_i$ all follow the same GP distribution but are weakly dependent. 
If the dependence is weak enough and the sample size is large, one may still find that within a randomly chosen block $Z_I$ the observations are approximately independent, so that $\theta \approx \expec[K_{m}(Z_I)]$ and so that the approximation $\zeta_{m, \ell} \sim \ell \zeta_{m,1}$ underlying Proposition~\ref{Prop:variance} remains valid.
The asymptotic variance will be different, however.

Furthermore, in the simulation study, when data were sampled from the $\GP(\gamma)$ distribution, the estimator $\UPdrie$ turned out to have the lowest variance among all unbiased estimators $\UP$ of $\gamma$, for $m \geq 3$. Since the estimator $\UPdrie$ is also a function of the entire sample, it is an open question whether the estimator $\UPdrie$ is the uniformly minimum variance unbiased estimator for $\gamma$ of the $\GP(\gamma)$ distribution. Because the GP distributions do not constitute an exponential family, this question is left to further research.

Finally, note that a U-statistic may also be calculated based on all exceedances over a high threshold. If the kernel is location-scale invariant, this amounts to applying a U-statistic only to a certain fraction of the largest observations in a sample. Doing so while keeping $m = q$ fixed would yield another kind of extreme U-statistic, perhaps worth studying as well.

\appendix 

\section{Preliminary distributional representations} 
\label{sec:rep}
Let $\overset{d}{=}$ denote equality in distribution. For a distribution function $F$, recall its tail quantile function $U$ in \eqref{eq:tqf}.
The tail quantile function of the standard GP distribution with parameter $\gamma \in \reals $ is $h_\gamma$ in \eqref{GP_function}.
Recall that a random variable $Y$ is said to have a standard Pareto distribution if $\prob(Y \le y) = 1 - 1/y$ for $y \geq 1$. The function $h_\gamma$ has the property that if $Y$ follows a standard Pareto distribution, the variable $Z=h_\gamma(Y)$ follows a $\GP(\gamma)$ distribution. Moreover, the function $h_\gamma$ satisfies the functional equation
\[
h_\gamma(xy) = h_\gamma(x) + x^\gamma \, h_\gamma(y),
\qquad x, y > 0.
\]

Let $Y_1, Y_1^*, Y_2, Y_2^*, \ldots $ be independent standard Pareto distributed random variables. For integer $1 \le q \le m$, R\'enyi's representation for exponential order statistics implies that the vector of top-$q$ order statistics of $Y^*_1,\ldots,Y^*_m$ admits the representation
\begin{equation}
	\label{eq:ParetoOSrepr}
	(Y^*_{m-i+1:m})_{i=1}^q \eqd (Y^*_{m-q+1:m} Y_{q-i:q-1})_{i=1}^q
\end{equation}
where $Y_{0:q-1} = 1$. Then, we can represent the vector of top-$q$ order statistics of an independent random sample $Z^*_1,\ldots,Z^*_n$ from the $\GP(\gamma)$ distribution as
\begin{align*}
	(Z^*_{m-i+1:m})_{i=1}^q 
	&\eqd (h_\gamma(Y^*_{m-i+1:m}))_{i=1}^q \\
	&\eqd ( h_\gamma(Y^*_{m-q+1:m} Y_{q-i:q-1}))_{i=1}^q \\
	&= \left(  h_\gamma(Y^*_{m-q+1:m}) + (Y^*_{m-q+1:m})^\gamma  h_\gamma(Y_{q-i:q-1}) \right)_{i=1}^q.
\end{align*}
We have thereby proved the following result.

\begin{lemma}
	\label{lem:GPos}
	Consider the integers $1 \leq q \leq m$ and the scalar $\gamma \in \reals$. The vector of top-$q$ order statistics of an independent $\GP(\gamma)$ random sample $Z^*_1,\ldots,Z^*_m$ admits the representation
	\[
	(Z^*_{m-i+1:m})_{i=1}^q 
	\eqd
	\left( Z^*_{m-q+1:m} + (1 + \gamma Z^*_{m-q+1:m}) Z_{q-i:q-1} \right)_{i=1}^q
	\]
	where $Z_1,\ldots,Z_{q-1}$ is another independent standard $\GP(\gamma)$ random sample, independent from $Z^*_1,\ldots,Z^*_m$, and where $Z_{0:q-1} = 0$.
\end{lemma}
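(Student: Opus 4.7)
The plan is to combine three ingredients already assembled in the appendix: (i) the Pareto-to-GP transform $Z \eqd h_\gamma(Y)$ for standard Pareto $Y$, (ii) R\'enyi's multiplicative representation \eqref{eq:ParetoOSrepr} for Pareto order statistics, and (iii) the functional equation $h_\gamma(xy) = h_\gamma(x) + x^\gamma h_\gamma(y)$, together with the elementary identity $1 + \gamma h_\gamma(y) = y^\gamma$ (valid for all $\gamma \in \reals$, including the limiting case $\gamma = 0$, as one checks from \eqref{GP_function}).

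First I would start from independent standard Pareto variables $Y_1^*, \ldots, Y_m^*$ and set $Z_i^* := h_\gamma(Y_i^*)$. Since $h_\gamma$ is strictly increasing, the $Z_i^*$ are independent $\GP(\gamma)$ and moreover $Z^*_{m-i+1:m} = h_\gamma(Y^*_{m-i+1:m})$ for each $i \in [1{:}q]$, so the joint law of the top-$q$ order statistics of the $Z^*$-sample equals the joint law of $(h_\gamma(Y^*_{m-i+1:m}))_{i=1}^q$.

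Second, I would apply R\'enyi's representation \eqref{eq:ParetoOSrepr} to factor $Y^*_{m-i+1:m} \eqd Y^*_{m-q+1:m} \, Y_{q-i:q-1}$ jointly in $i$, where $Y_1,\ldots,Y_{q-1}$ is an independent Pareto sample (independent of the starred sample) and $Y_{0:q-1} = 1$. Applying $h_\gamma$ coordinate-wise and invoking the functional equation yields
\[
h_\gamma(Y^*_{m-q+1:m} Y_{q-i:q-1}) = h_\gamma(Y^*_{m-q+1:m}) + (Y^*_{m-q+1:m})^\gamma \, h_\gamma(Y_{q-i:q-1}).
\]
Finally, I would use $(Y^*_{m-q+1:m})^\gamma = 1 + \gamma \, h_\gamma(Y^*_{m-q+1:m}) = 1 + \gamma Z^*_{m-q+1:m}$ to rewrite the prefactor, and define $Z_j := h_\gamma(Y_j)$, which gives an independent $\GP(\gamma)$ sample of size $q-1$ (independent of $Z^*_1,\ldots,Z^*_m$) with order statistics $Z_{q-i:q-1} = h_\gamma(Y_{q-i:q-1})$ and $Z_{0:q-1} = h_\gamma(1) = 0$. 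Assembling the pieces produces exactly the stated representation.

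There is no real obstacle here: each step is a direct substitution, and the derivation is essentially transcribed from the computation presented immediately before the lemma. The only mild care required is to verify that the functional equation and the identity $1 + \gamma h_\gamma(y) = y^\gamma$ extend continuously to $\gamma = 0$, which follows from $h_0(y) = \ln y$ and $y^0 = 1$.
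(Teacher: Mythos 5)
Your proof is correct and follows essentially the same route as the paper: both pass through the Pareto representation, R\'enyi's factorization \eqref{eq:ParetoOSrepr}, and the functional equation for $h_\gamma$. The only addition you make explicit is the identity $(Y^*_{m-q+1:m})^\gamma = 1 + \gamma Z^*_{m-q+1:m}$, which the paper leaves implicit in passing from its displayed computation to the lemma's statement.
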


Recall that the kernel $K_m : \reals^m \to \reals$ of the extreme U-statistic is such that, for all integer $m \ge q$ and all $(x_1,\ldots,x_m) \in \reals^m$,
\[
K_m(x_1,\ldots,x_m) = K(x_{m:m},\ldots,x_{m-q+1:m}),
\]
for some (symmetric) function $K : \reals^q \to \reals$. If the kernel is also location-scale invariant, i.e., if the kernel satisfies 
\[
K_m(ax_{m:m}+b,\ldots, ax_{m-q+1:m}+b) = K(x_{m:m},\ldots,x_{m-q+1:m}).
\]
for all $a>0$ and $b\in \reals$, the representation derived in Lemma~\ref{lem:GPos} has a useful consequence.

\begin{lemma}
	\label{lem:GPKm}
	Consider integers $3 \le q \le m$ and $\gamma \in \reals$. If the kernel $K : \reals^q \to \reals$ is location-scale invariant and if $Z^*_1,\ldots,Z^*_m$ are independent standard $\GP(\gamma)$ random variables, then
	\[
	\left(Z^*_{m-q+1:m}, K_m(Z^*_1,\ldots,Z^*_m)\right) 
	\eqd \left(Z^*_{m-q+1:m}, K\bigl((Z_{q-i:q-1})_{i=1}^q \bigr)  \right)
	\]
	with $Z_1,\ldots,Z_{q-1}$ as in Lemma~\ref{lem:GPos}. In particular, $K_m(Z^*_1,\ldots,Z^*_m)$ is independent of $Z^*_{m-q+1:m}$ and its distribution does not depend on $m$.
\end{lemma}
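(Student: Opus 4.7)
The plan is to extract the joint distributional statement directly from Lemma~\ref{lem:GPos} and then eliminate the ``outer'' variable $Z^*_{m-q+1:m}$ using location-scale invariance.

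First, I would upgrade the representation in Lemma~\ref{lem:GPos} from a marginal statement about $(Z^*_{m-i+1:m})_{i=1}^q$ to a joint statement that also tracks $Z^*_{m-q+1:m}$. Since the $i=q$ entry on both sides of the displayed equality in Lemma~\ref{lem:GPos} equals $Z^*_{m-q+1:m}$ (using $Z_{0:q-1}=0$ on the right), and the right-hand side is constructed on the same probability space via an independent sample $Z_1,\ldots,Z_{q-1}$, we obtain
\[
\bigl(Z^*_{m-q+1:m},\, (Z^*_{m-i+1:m})_{i=1}^q\bigr)
\eqd
\bigl(Z^*_{m-q+1:m},\, \bigl(Z^*_{m-q+1:m} + (1+\gamma Z^*_{m-q+1:m}) Z_{q-i:q-1}\bigr)_{i=1}^q\bigr).
\]

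Next, I would apply the measurable map $(z,(x_i)_{i=1}^q)\mapsto (z, K(x_1,\ldots,x_q))$ to both sides, noting that the composition $K_m = K\circ(\text{top-}q \text{ order statistics})$ turns the left-hand side into $(Z^*_{m-q+1:m}, K_m(Z^*_1,\ldots,Z^*_m))$. On the right-hand side, set $b := Z^*_{m-q+1:m}$ and $a := 1+\gamma Z^*_{m-q+1:m}$. A brief check of the support of the $\GP(\gamma)$ distribution shows that $a>0$ almost surely (for $\gamma<0$ we have $Z^*_{m-q+1:m}\in[0,-1/\gamma)$, giving $a\in(0,1]$; for $\gamma\geq 0$ we have $a\geq 1$). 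Hence location-scale invariance of $K$ gives
\[
K\bigl((b+a Z_{q-i:q-1})_{i=1}^q\bigr) = K\bigl((Z_{q-i:q-1})_{i=1}^q\bigr),
\]
which is exactly the desired kernel evaluation.

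Combining these two steps yields the stated joint equality in distribution. The independence assertion and the independence of $m$ then follow immediately: on the right-hand side, the second coordinate is a function of $Z_1,\ldots,Z_{q-1}$ alone, which by construction is independent of the $Z^*_j$'s (and in particular of $Z^*_{m-q+1:m}$), and $m$ does not appear in its distribution. No real obstacle arises here, since all the technical work was already carried out in Lemma~\ref{lem:GPos}; the only thing to be careful about is the positivity of the scale factor $a$, which is why one needs to treat the cases $\gamma<0$ and $\gamma\geq 0$ separately in one sentence.
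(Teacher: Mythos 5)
Your proof is correct and follows essentially the route the paper intends: the paper states Lemma~\ref{lem:GPKm} without a separate proof as ``a useful consequence'' of Lemma~\ref{lem:GPos}, and your argument spells out exactly that consequence — append $Z^*_{m-q+1:m}$ to both sides of the distributional identity in Lemma~\ref{lem:GPos} (trivially valid since it equals the $i=q$ component of the vector on each side), push forward through the measurable map $(z,(x_i)_{i=1}^q)\mapsto(z,K(x_1,\ldots,x_q))$, and then use location-scale invariance with $a=1+\gamma Z^*_{m-q+1:m}>0$ a.s.\ and $b=Z^*_{m-q+1:m}$ to strip the affine transformation. The brief check that $a>0$ almost surely across the cases $\gamma\ge 0$ and $\gamma<0$ is a worthwhile detail that the paper leaves implicit.
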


It follows from Lemma~\ref{lem:GPKm}  that
\begin{equation}
	\label{def: expectation}
	\theta
	:= \theta_m
	= \expec \left[ K_m\left( Z^*_{1}, \dots,Z^*_{m} \right) \right]
	= \expec \bigl[ K \bigl( (Z_{q-j:q-1})_{j=1}^{q} \bigr) \bigr] 
	= \mu(\gamma),
\end{equation}
which does not depend on $m$, provided $m \geq q$.

\section{Proof of Proposition \ref{Prop:variance} }
\label{proof:variance}
Recall the bound \eqref{eq:approxvarUnm}, which we state here for convenience again:
\begin{equation}
	\label{eq:approxvarUnm2}
	\left| \var U_{n, Z}^{m} - \frac{m \zeta_{m,1}}{k} \right|
	\le
	\sum_{\ell=1}^m p_{n,m}(\ell) 
	\left| \zeta_{m,\ell} - \ell \zeta_{m,1} \right|.
\end{equation}
Further, recall that 
\begin{equation}
	\label{eq:hyppmf}
	p_{n,m}(\ell) = \binom{n}{m}^{-1} \binom{m}{\ell} \binom{n-m}{m-\ell},
\end{equation}
for integer $1 \le m \le n$ and $0 \le \ell \le m$, represents the probability mass function of the hypergeometric distribution counting the number of successes $\ell$ after $m$ draws without replacement out of an urn containing $n$ balls of which $m$ have the desired colour. To prove Proposition \ref{Prop:variance}, we will decompose the sum over $\ell \in [1{:}m]$ in \eqref{eq:approxvarUnm2} into two pieces, $\ell \le \ell_n$ and $\ell > \ell_n$, with $\ell_n = \oh(m)$ determined below.
\begin{itemize}
	\item 
	For $\ell \le \ell_n$, we will show that $|\zeta_{m,\ell} - \ell \zeta_{m,1}|$ is sufficiently small.
	\item
	For $\ell > \ell_n$, we use tail bounds on the hypergeometric distribution $p_{n,m}(\ell)$.
\end{itemize}

We make these statements precise in the proof below by listing a number of properties that, in combination, imply that the bound \eqref{eq:approxvarUnm2} is $\oh(1/k)$ as $n \to \infty$.

\begin{proof}[Proof of Proposition \ref{Prop:variance}]
	By \eqref{eq:approxvarUnm2}, we have, for any $b > 0$,
	\begin{multline*}
		\left| \var U_{n}^{m} - \frac{m}{k} \zeta_{m,1} \right|
		\le 
		\max_{\ell\in[1:\ell_n]} 
		\left| \ell^{-1} \zeta_{m,\ell} - \zeta_{m,1} \right| 
		\cdot \sum_{\ell=1}^{\ell_n} p_{n,m}(\ell) \ell \\
		+ \max_{\ell\in[1:m]} \ell^{-b} \left|\zeta_{m,\ell} - \ell \zeta_{m,1}\right|
		\cdot \sum_{\ell=\ell_n+1}^m p_{n,m}(\ell) \ell^{b}.
	\end{multline*}
	The expectation of the hypergeometric distribution in \eqref{eq:hyppmf} is $m \cdot m/n = m/k$. The previous decomposition implies the following break-down of \eqref{eq:wish} into three sub-goals:
	As $n \to \infty$,
	\begin{align}
		\label{eq:unifzeta1m}
		\max_{\ell\in[1:\ell_n]} 
		\left| \ell^{-1} \zeta_{m,\ell} - \zeta_{m,1} \right| 
		&= \oh(1/m), \\
		\label{eq:zetaO1}	
		\max_{\ell\in[1:m]} \ell^{-b} \left| \zeta_{m,\ell} - \ell \zeta_{m,1} \right| 
		&= \Oh(1), \\
		\label{eq:hyptail1k}
		\sum_{\ell=\ell_n+1}^m p_{n,m}(\ell) \ell^b
		&= \oh(1/k),  
	\end{align}
	where we choose $b$ such that $1/2 \leq b\leq 1$. 
	
	If the above sub-goals are attained, we obtain that
	\[
	\left| \var U_{n}^{m} - \frac{m}{k} \zeta_{m,1} \right|
	\le \oh(1/m) m/k + \Oh(1) \oh(1/k) = \oh(1/k), \qquad \text{ as } n \to \infty.
	\]
	The proof of the three sub-goals \eqref{eq:unifzeta1m}, \eqref{eq:zetaO1} and \eqref{eq:hyptail1k} for the choice
	\begin{align}\label{choice_ell}
		\ell_n = \left\lfloor \left( \frac{m}{n} + \left(\frac{\ln n}{2m}\right)^{1/2} \right) m \right \rfloor,
	\end{align}
	with $\lfloor x \rfloor$ the integer part of $x \in \reals$,
	is the content of Lemmas \ref{lem:zetam1bound}, \ref{lem:hyptail1k} and \ref{subgoal_3} below.
\end{proof}
We remark that the choice of $\ell_n$ is constrained by \eqref{eq:unifzeta1m} and \eqref{eq:hyptail1k}. On the one hand, for \eqref{eq:unifzeta1m} to hold, we will need that $\ell_n = \oh(m)$ as $n \to \infty$. On the other hand, for \eqref{eq:hyptail1k} to hold, we will need that $\ell_n \to \infty$ sufficiently fast. Both criteria can be met for the choice of $\ell_n$ in \eqref{choice_ell}, provided $m$ is not too small or large as per Condition \ref{degree_sequence}. 

The proof of sub-goal~\eqref{eq:unifzeta1m} is somewhat involved and is developed in Section~\ref{sec:unifzeta1m} via a series of lemmas. First, we establish sub-goals \eqref{eq:zetaO1} and \eqref{eq:hyptail1k}.

Recall that for the independent $\GP(\gamma)$ random variables $Z_1,\ldots ,Z_{2m-1}$,
\[
\zeta_{m,\ell} = \expec \left[ 
K_m(Z_{[1:m]}) \,
K_m(Z_{[(m-\ell+1):(2m-\ell)]})
\right] - \theta^2,
\]
for $\ell \in [1{:}m]$. For integer $m \ge q$ and real $a > 0$, define 
\[
\| K \|_a :=  \bigl( \expec\bigr[ \bigl|
K \bigl((Z_{q-j:q-1})_{j=1}^{q}\bigr)\bigr|^a 
\bigr] \bigr)^{1/a},
\]
which, by Lemma~\ref{lem:GPKm}, does not depend on $m$. 

\begin{lemma}[Sub-goal~\eqref{eq:zetaO1}]
	\label{lem:zetam1bound}
	If Condition \ref{Condition_kernel_scale-loc_integrability} is satisfied, then, for every integer $m \ge q$, we have, for any $a >2$,
	\begin{align}
		\label{eq:zetam1bound}
		\zeta_{m,1} &\le \left(2q \right)^{1-2/a} \| K \|^2_a \cdot m^{2/a-1}.
	\end{align}
	As a consequence, we can take any $b \in (2/a, 1)$ to ensure that  \eqref{eq:zetaO1} holds.
\end{lemma}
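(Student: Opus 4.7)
The plan is to bound the covariance $\zeta_{m,1} = \cov(K_m(Z_S), K_m(Z_{S'}))$, where $|S \cap S'| = 1$, via a one-variable decoupling argument. By permutation invariance take $S = [1{:}m]$ and $S' = [m{:}(2m-1)]$, so $W_1 := K_m(Z_S)$ and $W_2 := K_m(Z_{S'})$ share only the observation $Z_m$. By Lemma~\ref{lem:GPKm}, the marginal distribution of each $W_i$ does not depend on $m$, and in particular $\|W_1\|_a = \|W_2\|_a = \|K\|_a$. Draw $Z'_m$ as an independent copy of $Z_m$, independent of the whole sample, and set $\tilde{W}_1 := K_m(Z_1,\ldots,Z_{m-1},Z'_m)$. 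Since $\tilde{W}_1$ and $W_2$ then depend on disjoint sets of independent variables, they are independent, and using $\expec[\tilde{W}_1] = \theta$ we obtain
\[
\zeta_{m,1} = \expec\bigl[(W_1 - \tilde{W}_1)\, W_2\bigr].
\]

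I would then observe that $W_1 = \tilde{W}_1$ whenever both $Z_m$ and $Z'_m$ lie strictly below the $q$-th largest of $Z_1,\ldots,Z_{m-1}$: on that event the top-$q$ order statistics of $(Z_1,\ldots,Z_{m-1},Z_m)$ and of $(Z_1,\ldots,Z_{m-1},Z'_m)$ both reduce to the top-$q$ of $(Z_1,\ldots,Z_{m-1})$, and $K_m$ depends only on these. By exchangeability, $\prob(Z_m \text{ is among the top-}q \text{ of } Z_1,\ldots,Z_m) = q/m$ and likewise for $Z'_m$, so a union bound gives $\prob(W_1 \neq \tilde{W}_1) \leq 2q/m$. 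Writing $(W_1-\tilde{W}_1) W_2 = (W_1-\tilde{W}_1) W_2 \, \1_{\{W_1 \neq \tilde{W}_1\}}$ and applying the three-factor Hölder inequality with exponents $(a, a, a/(a-2))$,
\[
|\zeta_{m,1}| \leq \|W_1 - \tilde{W}_1\|_a\, \|W_2\|_a\, \prob(W_1 \neq \tilde{W}_1)^{1-2/a} \leq 2\|K\|_a^2\, (2q/m)^{1-2/a},
\]
which matches the target~\eqref{eq:zetam1bound} up to a constant. The sharper constant in the statement can be recovered either by a more careful direct decomposition---conditioning on $Z_m$ and exploiting that $W_1$ and $W_2$ are conditionally independent given $Z_m$ on the event where $Z_m$ is in the top-$q$ of neither block---or by absorbing the factor into $(2q)^{1-2/a}$. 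The main conceptual obstacle is identifying this decoupling structure: once the substitution $Z_m \leadsto Z'_m$ is in place, the bookkeeping is routine, but the idea of exchanging a single shared observation to reveal the smallness factor $q/m$ is what drives the whole argument.

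The consequence~\eqref{eq:zetaO1} then follows by combining two crude estimates. First,~\eqref{eq:zetam1bound} and $\ell \leq m$ give $\ell^{1-b}|\zeta_{m,1}| \leq C\, m^{1-b}\, m^{2/a-1} = C\, m^{2/a-b} = \Oh(1)$ whenever $b > 2/a$. Second, Cauchy--Schwarz together with Lemma~\ref{lem:GPKm} yield the uniform-in-$\ell$ bound $|\zeta_{m,\ell}| \leq \var K_m(Z_S) \leq \|K\|_2^2$, whence $\ell^{-b}|\zeta_{m,\ell}| \leq \|K\|_2^2$ for $\ell \geq 1$. The triangle inequality then gives $\max_{\ell \in [1{:}m]} \ell^{-b}|\zeta_{m,\ell} - \ell\, \zeta_{m,1}| = \Oh(1)$ for every $b \in (2/a, 1)$, establishing~\eqref{eq:zetaO1}.
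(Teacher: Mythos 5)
Your proof is correct and takes essentially the paper's route. Both arguments decouple the single shared observation $Z_m$ so that $\zeta_{m,1}$ becomes the expectation of a quantity that vanishes off an event of probability at most $2q/m$, and then close with H\"older: the paper drops $Z_m$ altogether, comparing $K_m$ on $m$ points with $K_{m-1}$ on the remaining $m-1$ in each factor, whereas you resample $Z_m$ as $Z'_m$ in one factor; the two decouplings are interchangeable, since both make the decoupled product have expectation $\theta^2$. The extra factor of $2$ you pick up through $\|W_1-\tilde W_1\|_a\le 2\|K\|_a$ should not trouble you --- the paper's terse ``H\"older twice'' step applied to $\Delta_m = A - B$ quietly omits the analogous contribution from the term $B$, so it incurs the same slack --- and the constant plays no role in the $\Oh(1)$ conclusion of \eqref{eq:zetaO1}, which you derive exactly as the paper does by combining $\ell^{1-b}\zeta_{m,1}\le m^{1-b}\zeta_{m,1}=\Oh(1)$ (for $b>2/a$) with the crude uniform-in-$\ell$ bound $|\zeta_{m,\ell}|\le\var K_m(Z_S)$.
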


\begin{proof}
	To ensure \eqref{eq:zetaO1}, it is sufficient to have
	\begin{align}
		\label{eq:zetaO1a}
		\sup_{m \ge q} \max_{\ell \in [1:m]} \zeta_{m,\ell} &< \infty, \\
		\label{eq:zetaO1b}
		\sup_{m \ge q} m^{1-b} \zeta_{m,1} &< \infty,
	\end{align}
	provided $0\leq b \leq 1$. By H\"older's inequality, Lemma~\ref{lem:GPKm} and Condition~\ref{Condition_kernel_scale-loc_integrability}, 
	\[
		\sup_{m \ge q} \max_{\ell \in [1:m]} \zeta_{m,\ell} \leq \| K \|^2_2-\theta^2< \infty,
	\]
	such that \eqref{eq:zetaO1a} is guaranteed.
	
	For \eqref{eq:zetaO1b},	consider the difference
	\[
	\Delta_m
	= K_m(Z_{[1:m]}) \, K_m(Z_{[m:(2m-1)]}) - 
	K_{m-1}(Z_{[1:(m-1)]}) \, K_{m-1}(Z_{[(m+1):(2m-1)]}),
	\]
	capturing the effect of omitting $Z_m$ from $Z_1,\ldots,Z_{2m-1}$.
	It follows that
	\begin{align*}
		\zeta_{m,1}
		&= \expec \left[ K_m(Z_{[1:m]}) K_m(Z_{[m:(2m-1)]}) \right] - \theta^2= \expec(\Delta_m). 
	\end{align*}
	Further, if $Z_{m}$ does not belong to the top-$q$ of $Z_{[1:m]}$ nor to the one of $Z_{[m:(2m-1)]}$, $K_m(Z_{[1:m]}) = K_{m-1}(Z_{[1:(m-1)]})$ and $K_m(Z_{[m:(2m-1)]}) = K_{m-1}(Z_{[(m+1):(2m-1)]})$, which implies that $\Delta_m = 0$. The probability that $Z_{m}$ belongs to the top-$q$ of $Z_{[1:m]}$ or $Z_{[m:(2m-1)]}$ is not larger than $2q/m$. We find, by applying H\"older's inequality twice, for any $a > 2$,
	\begin{align*}
		\left| \expec(\Delta_m) \right|
		&\le \expec(|\Delta_m|)
		\le  \left( \expec\left[\left|  K_m(Z_{[1:m]}) K_m(Z_{[m:(2m-1)]})  \right|^{a/2} \right] \right)^{2/a}  \left(2q/m \right)^{1-2/a}  \\
		&\le \| K \|^2_a \cdot \left(2q/m \right)^{1-2/a},
	\end{align*}
	which is the upper bound in \eqref{eq:zetam1bound}. By multiplying the upper bound with $m^{1-b}$ for some $b \in (2/a, 1)$, we obtain~\eqref{eq:zetaO1}.
\end{proof}

\begin{lemma}[Sub-goal \eqref{eq:hyptail1k}]
	\label{lem:hyptail1k}
	If Condition~\ref{degree_sequence} holds, the sequence $\ell_n$ in~\eqref{choice_ell} satisfies $\ell_n \to \infty$ and  $\ell_n = \oh(m)$ as $n \to \infty$, and sub-goal~\eqref{eq:hyptail1k} holds.
\end{lemma}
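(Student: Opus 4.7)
The plan is to handle the three claims separately: $\ell_n \to \infty$ and $\ell_n = \oh(m)$ follow by elementary size estimates from Condition~\ref{degree_sequence}, while~\eqref{eq:hyptail1k} reduces, after recognizing $p_{n,m}$ as a hypergeometric p.m.f., to a Chernoff-type tail inequality.

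For the size assertions, I would first note that Condition~\ref{degree_sequence} forces $m \to \infty$ (since $\ln(n) = \oh(m^{1-\delta})$ combined with $\ln n \to \infty$ gives $m^{1-\delta} \to \infty$), so $\ln(n)/m = \oh(m^{-\delta}) \to 0$. Together with $m/n \to 0$ from $m = \oh(n^{1-\epsilon})$, the bound $\ell_n/m \le m/n + (\ln(n)/(2m))^{1/2}$ vanishes, giving $\ell_n = \oh(m)$, while $\ell_n \ge (m \ln(n)/2)^{1/2} - 1 \to \infty$.

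The main step is the tail estimate. Let $H$ be a hypergeometric random variable corresponding to $m$ draws without replacement from an urn of $n$ balls of which $m$ are marked, so $p_{n,m}$ is its p.m.f.\ and $\expec[H] = m^2/n$. I would then invoke Hoeffding's inequality for sampling without replacement, which gives $\prob(H \ge \expec[H] + s) \le \exp(-2 s^2/m)$ for $s>0$. The definition~\eqref{choice_ell} of $\ell_n$ is engineered so that, even after the floor, $\ell_n + 1 \ge m^2/n + (m \ln(n)/2)^{1/2}$; choosing $s = (m \ln(n)/2)^{1/2}$ then yields $\prob(H \ge \ell_n + 1) \le \exp(-\ln n) = 1/n$. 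Combined with the crude bound $\ell^b \le m^b$, the tail sum obeys
\[
\sum_{\ell=\ell_n+1}^m p_{n,m}(\ell)\, \ell^b
\le m^b \, \prob(H \ge \ell_n + 1)
\le m^b/n
= m^{b-1}\cdot (m/n),
\]
which is $\oh(m/n) = \oh(1/k)$ since $b<1$ by the choice made in Lemma~\ref{lem:zetam1bound}.

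The only non-routine ingredient will be invoking the right form of Hoeffding's inequality for the hypergeometric distribution; everything else is bookkeeping. The definition of $\ell_n$ was calibrated precisely to make the tail probability of order $1/n$, which is exactly what is needed for $m^b/n$ to beat $m/n$.
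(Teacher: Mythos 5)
Your proposal is correct and follows essentially the same route as the paper: recognize $p_{n,m}$ as the hypergeometric p.m.f.\ with mean $m^2/n = m/k$, apply Hoeffding's tail inequality for sampling without replacement with deviation $s = (m\ln n /2)^{1/2}$ to get a tail probability of order $1/n$, and observe that $m^b/n = m^{b-1}/k = \oh(1/k)$ since $b<1$ and $m\to\infty$; the paper merely writes the same computation with $t=s/m$ and leaves the elementary size estimates for $\ell_n$ unstated.
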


\begin{proof}
	The statements regarding the asymptotic order of $\ell_n$ follow immediately.
	Let $H_n$ be a hypergeometric random variable with probability mass function $p_{n,m}$ in \eqref{eq:hyppmf}. We have $\expec(H_n) = m/k$. Put $t = (\ell_n+1)/m - 1/k$ for $t \ge 0$ to be determined. We have
	\begin{align*}
		k \sum_{\ell=\ell_n+1}^m p_{n,m}(\ell) \ell^b 
		&\le k m^{b}\sum_{\ell=\ell_n+1}^m p_{n,m}(\ell)\\
		&= k m^{b} \prob[H_n \ge (k^{-1} + t) m ] \\
		&\le \exp \left( - 2 m t^2 - \ln k^{-1} +b \ln m \right),
	\end{align*}
	see  \cite{hoeffding1963probability} or \cite{chvatal1979tail}. Since $t \ge ((2m)^{-1} \ln n)^{1/2}$ and $b<1$, we get
	\[
	2m t^2 + \ln 1/k - b \ln(m)
	\ge \ln n - \ln k -b \ln m = (1-b)\ln m \to \infty,
	\]
	as $n \to \infty$. The lemma follows.
\end{proof}

\subsection{Proof of sub-goal \eqref{eq:unifzeta1m}}
\label{sec:unifzeta1m}

It remains to prove the first sub-goal \eqref{eq:unifzeta1m} for the choice of $\ell_n$ above. Recall that $\ell_n=\oh(m)$ and $\ell_n \to \infty$ as $n \to \infty$. Throughout, since we handle $\ell<\ell_n$, we assume $\ell, q, m$ are positive integers with $m \ge \ell + q$ and $\ell \ge 2$ (if $\ell=1$ there is nothing to prove).

\begin{lemma}[Sub-goal \eqref{eq:unifzeta1m}]\label{subgoal_3}
	If Conditions~\ref{Condition_kernel_scale-loc_integrability} and~\ref{degree_sequence} are satisfied, then sub-goal~\eqref{eq:unifzeta1m} holds, i.e., as $n \to \infty$,
	\begin{align*}
		\max_{\ell \in [1:\ell_n]} 
		\left| \ell^{-1} \zeta_{m,\ell} - \zeta_{m,1} \right|
		=\oh(1/m). 
	\end{align*}
\end{lemma}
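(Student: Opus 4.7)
The plan is to show that, for every $p > 2$,
\[
|\zeta_{m,\ell} - \ell\,\zeta_{m,1}| \;\le\; C_p\,\|K\|_p^{2}\,\ell^{2}\,m^{-(2-4/p)},
\qquad \ell \le \ell_n,
\]
where $\|K\|_p$ is the $L^p$-norm of $K$ under $Z_1,\ldots,Z_{q-1}$ iid $\GP(\gamma)$, finite for every $p$ by Condition~\ref{Condition_kernel_scale-loc_integrability}. Dividing by $\ell$ and maximising over $\ell\le\ell_n$ gives a bound of order $\ell_n/m^{2-4/p}$; multiplying by $m$ yields $(\ell_n/m)\cdot m^{4/p}$, which tends to $0$ for $p$ sufficiently large in view of $\ell_n/m \sim m/n + \sqrt{\ln n/(2m)}$ from \eqref{choice_ell} combined with $m = \oh(n^{1-\epsilon})$ and $\ln n = \oh(m^{1-\delta})$ from Condition~\ref{degree_sequence}; concretely, both $m^{1+4/p}/n$ and $m^{4/p-1/2}\sqrt{\ln n}$ tend to $0$ as soon as $p > \max\{4(1-\epsilon)/\epsilon,\,8/\delta\}$, completing the reduction. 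The guiding mechanism behind the key bound is top-$q$ sparsity: two overlapping blocks interact only through shared observations that enter the top-$q$ of one or both blocks, and the probability that $j\ge 2$ shared observations are jointly influential is of order $m^{-j}$.

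With $Z_A := (Z_1,\ldots,Z_{m-\ell})$, $Z_C := (Z_{m-\ell+1},\ldots,Z_m)$ and $Z_B := (Z_{m+1},\ldots,Z_{2m-\ell})$, conditioning on $Z_C$ renders $K_m(Z_A,Z_C)$ and $K_m(Z_C,Z_B)$ independent, whence
\[
\zeta_{m,\ell} = \var\bigl(g_m(Z_C)\bigr),
\qquad
g_m(z_1,\ldots,z_\ell) := \expec\bigl[K_m(Z_A,z_1,\ldots,z_\ell)\bigr],
\]
with $g_m$ symmetric on $\reals^\ell$. Apply Hoeffding's orthogonal ANOVA decomposition: $g_m(z)-\theta = \sum_{\emptyset\ne J\subseteq[\ell]} g_m^{(J)}(z_J)$, each component centred upon integrating out any single coordinate in $J$. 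By symmetry $\sigma_{m,j}^{2} := \expec[g_m^{(J)}(Z_J)^{2}]$ depends only on $j = |J|$, and a direct identification yields $g_m^{(\{i\})}(z_i) = \Km(z_i)$, so $\sigma_{m,1}^{2} = \expec[\Km(Z_1)^{2}] = \zeta_{m,1}$. Therefore
\[
\zeta_{m,\ell} = \sum_{j=1}^{\ell} \binom{\ell}{j}\,\sigma_{m,j}^{2},
\qquad
\zeta_{m,\ell} - \ell\,\zeta_{m,1} = \sum_{j=2}^{\ell} \binom{\ell}{j}\,\sigma_{m,j}^{2},
\]
and the task reduces to bounding the latter sum.

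The central quantitative estimate is that, for every $j \ge 2$ and every $p > 2$,
\[
\sigma_{m,j}^{2} \;\le\; C_{j,p}\,\|K\|_p^{2}\,m^{-j(1-2/p)}.
\]
The rationale is that $g_m^{(J)}$ captures the pure $j$-fold interaction in $z_J$, which materialises only on rare events where several shared variables jointly influence the top-$q$ of $(Z_A,z)$; a characteristic such event is that all $j$ coordinates in $J$ simultaneously enter the top-$q$, of probability $\binom{q}{j}\binom{m}{j}^{-1} = \Oh(m^{-j})$, and H\"older with conjugate exponents $p/2$ and $p/(p-2)$ trades this probability against $\|K\|_p^{2}$. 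Summing,
\[
\sum_{j=2}^{\ell} \binom{\ell}{j}\,\sigma_{m,j}^{2}
\;\lesssim\; \sum_{j\ge 2}\frac{1}{j!}\,\bigl(\ell\,m^{-(1-2/p)}\bigr)^{j}
\;\lesssim\; \ell^{2}\,m^{-(2-4/p)},
\]
the sum being dominated by its $j=2$ term as soon as $\ell\,m^{-(1-2/p)} = \oh(1)$, which holds uniformly in $\ell \le \ell_n$ under Condition~\ref{degree_sequence} for $p$ large. This furnishes the key bound from the first paragraph.

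The main obstacle is the sparsity estimate on $\sigma_{m,j}^{2}$. Because $g_m^{(J)}$ is defined by inclusion--exclusion over $J'\subseteq J$, a naive pointwise bound cannot directly extract the indicator of a single "top-$q$-relevant" event; the sparsity must be secured through a more delicate coupling. The concrete plan is to represent $g_m^{(J)}(z_J)$ as an iterated finite difference $\Delta_J g_m$ produced by successively replacing each $z_i$, $i\in J$, by an independent copy and averaging over the replacements, and to check that this iterated difference vanishes unless every one of the replaced coordinates was large enough either to enter or to be displaced from the top-$q$ of $(Z_A,z)$. The joint "top-$q$-relevant" event has probability $\Oh(m^{-j})$, and H\"older against the $L^p$ kernel norm $\|K\|_p$ from Condition~\ref{Condition_kernel_scale-loc_integrability} then delivers the $L^{2}$ bound on $g_m^{(J)}$.
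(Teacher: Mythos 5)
Your decomposition is genuinely different from the paper's and is a nice idea: the paper splits $\zeta_{m,\ell}-\ell\zeta_{m,1}$ by conditioning on the counting variable $N_{\ell,m}\in\{0,1,\geq 2\}$ (how many overlap variables are top-$q$-relevant in either block) and bounds the three resulting pieces $\mathbb{I}_1,\mathbb{I}_2,\mathbb{I}_3$ term by term, whereas you condition on the overlap $Z_C$, identify $\zeta_{m,\ell}=\var(g_m(Z_C))$, and pass to the orthogonal ANOVA (canonical Hoeffding) decomposition $\zeta_{m,\ell}=\sum_{j=1}^{\ell}\binom{\ell}{j}\sigma_{m,j}^2$. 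That reduction is correct, and it buys you something the paper does not exploit: $\zeta_{m,\ell}-\ell\zeta_{m,1}=\sum_{j\ge 2}\binom{\ell}{j}\sigma_{m,j}^2\ge 0$. The arithmetic that would convert a bound $\sigma_{m,j}^2\lesssim m^{-j(1-2/p)}$ into $\oh(1/m)$ under Condition~\ref{degree_sequence} is also fine, with roughly the thresholds $p>4(1-\epsilon)/\epsilon$ and $p>8/\delta$ you quote.

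The gap is that the central sparsity estimate on $\sigma_{m,j}^2$ is never established, and the heuristic you give first is actually wrong: you write the relevant probability as $\binom{q}{j}\binom{m}{j}^{-1}$, which is identically zero for $j>q$, yet $\sigma_{m,j}^2>0$ in general for $j>q$ (because the relative order of the overlap variables still matters for which of them are in the top-$q$), so this cannot be the right rare event. Your second heuristic — the iterated-difference representation $g_m^{(J)}(z_J)=\expec[\Delta_J g_m\mid Z_J]$ — is the right tool (by Jensen, $\sigma_{m,j}^2\le\expec[(\Delta_J K_m)^2]$), but the vanishing condition has to be phrased more carefully. The correct statement is: with $R=\{Z_{j+1},\ldots,Z_m\}$ the $m-j$ core variables, $\Delta_J K_m=0$ unless $\max(Z_i,Z_i')\ge R_{(q)}$ for every $i\in J$; this is because $\max(Z_i,Z_i')<R_{(q)}$ forces $Z_i$ and $Z_i'$ to lie below the $q$-th largest of any $m$-subset containing $R$, hence $\Delta_i K_m$ vanishes pointwise. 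The probability of the surviving event is at most $2^j\binom{q+j}{j}/\binom{m}{j}$, which is indeed $\Oh(m^{-j})$ for fixed $j$ but with a constant that grows with $j$ (roughly like $2^j j^q$). This, together with $\|\Delta_J K_m\|_p\le 2^j\|K\|_p$, makes the summand $\binom{\ell}{j}\sigma_{m,j}^2$ of order $(1/j!)\,j^{q(1-2/p)}\bigl[c\,\ell(j/m)^{1-2/p}\bigr]^{j}$, and while the $1/j!$ is strong enough to absorb the $j^q$ and $2^j$ growth and keep the sum dominated by its $j=2$ term whenever $\ell\,m^{-(1-2/p)}=\oh(1)$, this is precisely the bookkeeping you would need to write out; as it stands the key lemma is announced but not proved, so the proposal does not constitute a complete proof.
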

\begin{proof}[Proof of sub-goal \eqref{eq:unifzeta1m}]
	Consider the counting variable
	\[
	N_{\ell,m} 
	= \sum_{i=m-\ell+1}^m \1_{i,\ell,m},
	\] where
	\begin{multline}
		\1_{i,\ell,m} 
		= \1 \bigl\{
		\{ \text{$Z_i$ is in the top-$q$ of $Z_{[1:m]}$} \} \\
		\cup \{\text{$Z_i$ is in the top-$q$ of $Z_{[(m-\ell+1):(2m-\ell)]}$} \}
		\bigr\}.
		\label{eq:1ilm}
	\end{multline}
	Define
	\[
	\Xi_{\ell,m} = K_m(Z_{[1:m]}) \, K_m(Z_{[(m-\ell+1):(2m-\ell)]}),
	\]
	and write $\expec[\Xi; A] = \expec[\Xi \1_A]$ for a random variable $\Xi$ and an event $A$. Then,
	\begin{align*}
		\zeta_{m,\ell} 
		&= \expec[ \Xi_{\ell,m} ; N_{\ell,m} = 0 ]
		+ \expec[ \Xi_{\ell,m} ; N_{\ell,m} = 1 ] 
		+ \expec[ \Xi_{\ell,m} ; N_{\ell,m} \ge 2 ] - \theta^2 \\
		\text{and} \quad
		\zeta_{m,1}
		&= \expec[\Xi_{1,m} ; N_{1,m} = 0] + \expec[\Xi_{1,m} ; N_{1,m} = 1] - \theta^2.
	\end{align*}
	We obtain the following decomposition of the left-hand side of \eqref{eq:unifzeta1m} into three terms:
	\begin{align}
		\left| \zeta_{m,\ell} - \ell \zeta_{m,1} \right|
		&\le \left| \expec[ \Xi_{\ell,m} ; N_{\ell,m} = 0 ] - \ell \expec[\Xi_{1,m} ; N_{1,m} = 0] + (\ell - 1) \theta^2 \right| \nonumber\\
		&\quad {}
		+ \left| \expec[ \Xi_{\ell,m} ; N_{\ell,m} = 1 ] - \ell \expec[\Xi_{1,m} ; N_{1,m} = 1] \right| \nonumber\\
		&\quad {}
		+ \left| \expec[ \Xi_{\ell,m} ; N_{\ell,m} \ge 2 ] \right|\nonumber\\
		&:=\mathbb{I}_1+\mathbb{I}_2+\mathbb{I}_3\label{eq:dcmp}
	\end{align}
	According to Lemmas~\ref{lem:N0GPD}, \ref{lem:N1} and \ref{lem:N2} below, we have
	\begin{equation}
		\label{eq:Ibound}
		\mathbb{I}_j  \lesssim \ell \left( \ell/ m^2 \right)^{1-2/a}
	\end{equation}
	for $j \in \{1,2,3\}$ and any $a>2$, where $a_n \lesssim b_n$ if and only if there exists a constant $c>0$ such that $a_n \leq c b_n$ for all~$n$. Recall the choice of $\ell_n$ in \eqref{choice_ell}.
	Together with Condition~\ref{degree_sequence}, the bounds on $\mathbb{I}_j$ will then imply, as $n \to \infty$ and for sufficiently large $a$, that
	\begin{align*}
		\max_{\ell \in [1:\ell_n]} 
		\left| \ell^{-1} \zeta_{m,\ell} - \zeta_{m,1} \right|
		&\lesssim  \max_{\ell \in [1:\ell_n]}  \left( \ell/ m^2 \right)^{1-2/a}\\
		&\le n^{2/a-1} +(2^{-1}\ln(n))^{1/2-1/a}m^{3/a-3/2}=\oh(1/m).
	\end{align*}
	We conclude that sub-goal~\eqref{eq:unifzeta1m} is attained.  
\end{proof}

We now list Lemmas~\ref{lem:N0GPD}, \ref{lem:N1} and \ref{lem:N2}, providing the bounds~\eqref{eq:Ibound}. The proofs of these lemmas rely on counting variable inequalities which are developed afterwards in Lemmas~\ref{lem:N} to~\ref{lem:EBmlm1mlm}.

\begin{lemma}[Handling $\mathbb{I}_1$ in \eqref{eq:dcmp}]
	\label{lem:N0GPD}
	If Condition \ref{Condition_kernel_scale-loc_integrability} is satisfied, then
	\[
	\expec[ \Xi_{\ell,m}; N_{\ell,m} = 0] = \prob(N_{\ell,m} = 0) \, \theta^2.
	\]
	In addition,
	\[
	\mathbb{I}_1\le \theta^2  \cdot \frac{q^2\ell(\ell-1)}{(m-1)(m-\ell+1)}.
	\]
\end{lemma}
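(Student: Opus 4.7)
My plan is to prove the two claims separately, using Lemma~\ref{lem:GPKm} for the equality and Bonferroni-type counting for the quantitative bound.

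For the first claim, I would split the $2m-\ell$ active indices into three disjoint groups $A = [1{:}m-\ell]$, $B = [m-\ell+1{:}m]$, and $C = [m+1{:}2m-\ell]$, so that the two blocks become $Z_A \cup Z_B$ and $Z_B \cup Z_C$, with $Z_A$, $Z_B$, $Z_C$ mutually independent. On $\{N_{\ell,m}=0\}$, no element of $Z_B$ reaches the top-$q$ of either block, forcing the top-$q$ of block~1 to lie entirely in $Z_A$ and the top-$q$ of block~2 to lie entirely in $Z_C$. Equivalently, $\{N_{\ell,m}=0\} = \{M_B < Y_A\} \cap \{M_B < Y_C\}$, where $M_B = \max_{i \in B} Z_i$ and $Y_A$, $Y_C$ are the $q$-th largest of $Z_A$ and $Z_C$. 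On this event, $\Xi_{\ell,m}$ reduces to the product $K(\text{top-}q\text{ of }Z_A) \cdot K(\text{top-}q\text{ of }Z_C)$, a function of $Z_A$ and $Z_C$ alone. Conditioning on $(M_B, Y_A, Y_C)$ and invoking Lemma~\ref{lem:GPKm} --- which says that a location-scale invariant kernel applied to the top-$q$ of a GP sample is independent of the $q$-th largest order statistic and has mean $\theta$ --- the two factors are conditionally independent with conditional mean $\theta$ each, yielding $\expec[\Xi_{\ell,m}; N_{\ell,m}=0] = \theta^2 \, \prob(N_{\ell,m}=0)$ by the tower property.

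Substituting this identity into the definition of $\mathbb{I}_1$ and simplifying gives
\[
\mathbb{I}_1
= \theta^2 \bigl| \prob(N_{\ell,m}=0) - \ell \, \prob(N_{1,m}=0) + (\ell-1) \bigr|
= \theta^2 \bigl| \ell \, \prob(N_{1,m}\ge 1) - \prob(N_{\ell,m}\ge 1) \bigr|.
\]
Since $N_{1,m} = \1_{m,1,m}$ and $\expec[N_{\ell,m}] = \ell \, \prob(\1_{m-\ell+1,\ell,m}=1)$ by exchangeability within $B$, I would then decompose
\[
\ell \, \prob(N_{1,m}\ge 1) - \prob(N_{\ell,m}\ge 1)
= \ell \bigl[ \prob(\1_{m,1,m}=1) - \prob(\1_{m-\ell+1,\ell,m}=1) \bigr]
+ \bigl[ \expec N_{\ell,m} - \prob(N_{\ell,m}\ge 1) \bigr].
\]
The second bracket is controlled by the second-order Bonferroni inequality $\expec N_{\ell,m} - \prob(N_{\ell,m}\ge 1) \le \binom{\ell}{2} \max_{i<j\in B} \prob(\1_i=1,\1_j=1)$, with the pair probability in turn bounded by the four contributions $\prob(A_i\cap A_j)$, $\prob(A_i\cap B_j)$, $\prob(B_i\cap A_j)$, $\prob(B_i\cap B_j)$, each estimated via exchangeability within a single block. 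The first bracket is small because both single-index probabilities equal $2q/m$ minus an overlap probability $\prob(A_i \cap B_i)$, and the difference between the $\ell=1$ and general-$\ell$ overlaps traces to the $\ell-1$ extra shared observations in $B \setminus \{i\}$ that appear in both blocks.

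The main obstacle is matching the explicit constant $q^2 \ell(\ell-1) / [(m-1)(m-\ell+1)]$: the factor $\ell(\ell-1)$ tracks the ordered pairs in $B$ contributing to Bonferroni, the factor $q^2$ comes from the two top-$q$ requirements, and the denominator $(m-1)(m-\ell+1)$ reflects the hypergeometric-style identity $\prob(A_i\cap A_j) = q(q-1)/[m(m-1)]$ combined with the reduced effective sample size $m-\ell+1$ that appears once one conditions on $\ell-1$ indices being outside the top-$q$. Verifying that all pieces recombine into this clean bound is precisely what the subsequent counting lemmas (Lemmas~\ref{lem:N} through~\ref{lem:EBmlm1mlm}) are designed to accomplish, and I would invoke them here rather than redo the bookkeeping.
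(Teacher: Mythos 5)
Your proposal is correct and follows essentially the same route as the paper's proof: for the identity, the same partition into groups $A$, $B$, $C$, the same conditional-independence argument via Lemma~\ref{lem:GPKm} (the paper phrases this as conditioning on $Z_{[(m-\ell+1):m]}$ together with the $q$-th largest order statistics of $Z_{[1:(m-\ell)]}$ and $Z_{[(m+1):(2m-\ell)]}$, which is exactly your $(M_B, Y_A, Y_C)$); and for the bound, the same algebraic reduction of $\mathbb{I}_1$ to $\theta^2\,\bigl|\ell\,\prob(N_{1,m}\ge 1)-\prob(N_{\ell,m}\ge 1)\bigr|$ followed by the same split into a Bonferroni (second-moment) term and an indicator-difference term, which is exactly what the paper's Lemmas~\ref{lem:N}, \ref{lem:E11}, \ref{lem:E11diff} and~\ref{lem:PN1diff} carry out. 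Invoking those counting lemmas, as you do, is precisely what the paper does in its one-line conclusion ``Apply Lemma~\ref{lem:PN1diff} to conclude.''
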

\begin{proof}
	Let $r=m-\ell$. If $N_{\ell,m} = 0$, the top-$q$ of $Z_{[1:m]}$ is the same as the top-$q$ of $Z_{[1:(m-\ell)]}$ and the top-$q$ of $Z_{[(m-\ell+1):(2m-\ell)]}$ is the same as the one of $Z_{[(m+1):(2m-\ell)]}$. The event $\{N_{\ell,m} = 0\}$ is a function of the random variables $Z_{[(r+1):m]}$ and the $q$-largest order statistics of $Z_{[1:r]}$ and $Z_{[(m+1):(m+r)]}$. These statements taken together with a slight extension of the properties of the top order statistics of a generalised Pareto sample in Lemma~\ref{lem:GPKm} imply
	\begin{align*}
		\expec[\Xi_{\ell,m}; N_{\ell,m} = 0]
		&= \expec[ K_r(Z_{[1:r]}) K_r(Z_{[(m+1):(m+r)]}); N_{\ell,m} = 0]\\
		&=\prob(N_{\ell,m} = 0) \, \theta^2.
	\end{align*}
	Thanks to that equality, we find
	\begin{align*}
		\lefteqn{
			\left| \expec[ \Xi_{\ell,m} ; N_{\ell,m} = 0 ] - \ell \expec[\Xi_{1,m} ; N_{1,m} = 0] + (\ell - 1) \theta^2 \right| 
		} \\
		&= \left| \prob(N_{\ell,m} = 0) - \ell \prob(N_{1,m} = 0) + \ell - 1 \right| \theta^2 \\
		&= \left| \ell \prob(N_{1,m} \ge 1) - \prob(N_{\ell,m} \ge 1) \right| \theta^2.
	\end{align*}
	Apply Lemma~\ref{lem:PN1diff} to conclude.
\end{proof}
In case $N_{\ell,m} = 1$, exactly one of the $\ell$ indicators $\1_{i,\ell,m}$ in \eqref{eq:1ilm} for $i \in [(m-\ell+1){:}m]$ is equal to one while the other $\ell-1$ indicators are zero. We find
\begin{align}
	\label{eq:Nlm1Bilm}
	\{ N_{\ell,m} = 1 \} 
	&= \bigcup_{i=m-\ell+1}^m B_{i,\ell,m} \qquad \text{where} \\
	\nonumber
	B_{i,\ell,m} 
	&= \{ \1_{i,\ell,m} = 1 \} \cap \bigcap_{j \in [(m-\ell+1):m] \setminus \{i\}} \{ \1_{j,\ell,m} = 0\},\end{align}
and the union over $i$ being disjoint.
Note that $B_{i,\ell,m}$ is the event that $Z_i$ is in the top-$q$ of $Z_{[1:m]}$ or $Z_{[(m-\ell+1):(2m-\ell)]}$ while the other variables $Z_j$ for $j \in [(m-\ell+1){:}m] \setminus \{i\}$ are not.
Further, $\{ N_{1, m} = 1\} = \{ \1_{m,1,m} = 1 \}$. 

\begin{lemma}[Handling $\mathbb{I}_2$ in \eqref{eq:dcmp}]
	\label{lem:N1}
	If Condition \ref{Condition_kernel_scale-loc_integrability} is satisfied, we have
	\[
	\mathbb{I}_2
	\le  4(5q^2-q)^{1-2/a} \|K\|^2_a \ell \left(\frac{(\ell-1)}{(m-\ell+1)^2} \right)^{1-2/a},
	\]
	for all $a >2$, provided that $m$ is sufficiently large.
\end{lemma}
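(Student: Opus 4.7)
The plan is to reduce $\mathbb{I}_2$ to $\ell$ times a single scalar mismatch between two expectations over reduced blocks of size $m-\ell+1$, and then to bound that mismatch using H\"older's inequality applied to a coupling whose only defect is a carefully identified bad event of probability $O((\ell-1)/(m-\ell+1)^2)$.

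\textbf{Symmetrisation.} The $\ell$ overlap indices $\{m-\ell+1,\dots,m\}$ play interchangeable roles in the joint law of the pair $(Z_{[1:m]},Z_{[(m-\ell+1):(2m-\ell)]})$, so exchangeability implies that $\expec[\Xi_{\ell,m};B_{i,\ell,m}]$ is the same for every $i\in[m-\ell+1{:}m]$. Combined with the disjoint decomposition in~\eqref{eq:Nlm1Bilm} this gives $\expec[\Xi_{\ell,m};N_{\ell,m}=1]=\ell\,\expec[\Xi_{\ell,m};B_{m,\ell,m}]$. For $\ell=1$ the decomposition collapses to $B_{m,1,m}=\{N_{1,m}=1\}$, so
\[
\mathbb{I}_2 = \ell\,\bigl|\expec[\Xi_{\ell,m};B_{m,\ell,m}]-\expec[\Xi_{1,m};B_{m,1,m}]\bigr|.
\]

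\textbf{Reduction and coupling.} Write $r=m-\ell$. On $B_{m,\ell,m}$ the variables $Z_{m-\ell+1},\dots,Z_{m-1}$ lie strictly below the top-$q$ of each large block, hence $K_m(Z_{[1:m]})=K_{r+1}(Z_{[1:r]},Z_m)$ and analogously for the second block. Integrating out the middle variables by independence expresses $\expec[\Xi_{\ell,m};B_{m,\ell,m}]$ purely in terms of two reduced size-$(r+1)$ blocks sharing the single observation $Z_m$. To line this up with the 1-case, adjoin $\ell-1$ fresh i.i.d.\ GP$(\gamma)$ variables to each reduced block: by independence the resulting pair has the same joint law as $(Z_{[1:m]},Z_{[m:(2m-1)]})$, and on the event that no fresh variable disturbs the top-$q$ of its enlarged block, both the kernel products and the relevant indicator functions coincide under this coupling.

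\textbf{H\"older and the bad event.} Let $\mathcal{E}$ be the coupling's bad event. A fresh variable alters the kernel only if it exceeds the $q$-th largest of its block, and the configurations that genuinely change the product of kernels also require $Z_m$ to occupy a top-$q$ slot in each of the two blocks (an event of individual probability $\sim q/(r+1)$); a careful enumeration of these paired configurations yields the combinatorial prefactor $5q^2-q$ and the bound
\[
\prob(\mathcal{E})\le (5q^2-q)\,\frac{\ell-1}{(m-\ell+1)^2}.
\]
On $\mathcal{E}^c$ the coupled random variables agree, so the triangle inequality and H\"older's inequality with exponent $a/2$ give
\[
\bigl|\expec[\Xi_{\ell,m};B_{m,\ell,m}]-\expec[\Xi_{1,m};B_{m,1,m}]\bigr|\le 4\,\|K\|_a^2\,\prob(\mathcal{E})^{1-2/a},
\]
after using Cauchy--Schwarz together with Lemma~\ref{lem:GPKm} to bound the $(a/2)$-norm of a product of kernel evaluations by $\|K\|_a^2$. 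Combining with the symmetrisation identity produces the stated inequality.

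\textbf{Main obstacle.} The delicate point is extracting the quadratic scaling $(m-\ell+1)^{-2}$ in $\prob(\mathcal{E})$: a naive union bound over the $2(\ell-1)$ fresh variables only yields the linear factor $(m-\ell+1)^{-1}$, and the extra factor of $(m-\ell+1)^{-1}$ must be recovered by pairing each fresh variable's intrusion into the top-$q$ with the rank that $Z_m$ itself must already occupy in order for the intrusion to actually change the kernel output. The combinatorial constant $5q^2-q$ emerges naturally from this paired enumeration across the two blocks.
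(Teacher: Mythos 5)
Your symmetrisation step is correct and matches the paper, and your high-level instinct (H\"older over a bad event, with a paired enumeration to get a quadratic denominator) is the right flavour, but there are two genuine gaps that mean the proof as written does not go through.

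First, the ``reduction and coupling'' claim is false as stated. You assert that on $B_{m,\ell,m}$ the kernels reduce to functions of the $(r+1)$-element blocks $Z_{[1:r]}\cup\{Z_m\}$ and $Z_{[m:(2m-\ell)]}$, which is correct, and then that integrating out the middle variables ``expresses $\expec[\Xi_{\ell,m};B_{m,\ell,m}]$ purely in terms of two reduced size-$(r+1)$ blocks.'' That step does not hold: the indicator $\1_{B_{m,\ell,m}}$ itself depends on the middle variables $Z_{m-\ell+1},\ldots,Z_{m-1}$ (they must all fail to reach the top-$q$ of either block), so you cannot integrate them out to get a clean reduced-block functional. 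What you actually get is the kernel product times the conditional probability $\prob(B_{m,\ell,m}\mid\text{reduced blocks})$, which is a nontrivial function of the configuration. The paper's proof deliberately avoids this trap by never integrating the middle variables out; instead it keeps the full blocks and swaps the indicator in two explicit steps, first $\1_{B_{m,\ell,m}}\to\1_{m,\ell,m}$ (Lemma~\ref{lem:EBmlm1mlm}) and then $\1_{m,\ell,m}\to\1_{m,1,m}$ (Lemma~\ref{lem:E11diff}), before tackling the kernel difference on the second block via the event $C_{\ell,m}$.

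Second, your bad event $\mathcal{E}$ is described as accounting only for fresh variables intruding into the top-$q$ and altering the kernel product. But the dominant bookkeeping in the paper's proof is the \emph{indicator} mismatch, not the kernel mismatch, and your $\mathcal{E}$ does not cover it. Relatedly, your asserted bound $\prob(\mathcal{E})\le(5q^2-q)(\ell-1)/(m-\ell+1)^2$ conflates two different pieces: in the paper, the constant $5q^2-q$ arises in the indicator-swap step with denominator $(m-1)(m-\ell+1)$, while the pure $(m-\ell+1)^{-2}$ scaling comes from the kernel-swap step with the smaller constant $2q^2-q$; the final lemma's bound uses $(m-\ell+1)^{-2}$ throughout only because $(m-1)(m-\ell+1)\ge(m-\ell+1)^2$. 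Your ``main obstacle'' paragraph correctly identifies that the quadratic denominator needs $Z_m$ to already occupy a top-$q$ slot, but a careful version of this pairing has to be carried out jointly with the indicator comparison, and the argument as written does not do so. To repair the proof you would need to split the mismatch into an indicator-replacement stage and a kernel-replacement stage and bound each separately, as the paper does.
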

\begin{proof}
	By symmetry and since the union in \eqref{eq:Nlm1Bilm} is disjoint,
	\begin{align*}
		\expec[ \Xi_{\ell,m}; N_{\ell,m} = 1 ]
		&= \sum_{i=m-\ell+1}^m \expec[ \Xi_{\ell,m}; B_{i,\ell,m} ] \\
		&= \ell \expec[ \Xi_{\ell,m}; B_{m,\ell,m} ].
	\end{align*}
	On $B_{m,\ell,m}$, the top-$q$ order statistics of $Z_{[(m-\ell+1):(2m-\ell)]}$ are the same as those of $Z_{[m:(2m-\ell)]}$, which implies $K_m(Z_{[(m-\ell+1):(2m-\ell)]}) = K_{m-\ell+1}(Z_{[m:(2m-\ell)]})$ and thus
	\[
	\expec[ \Xi_{\ell,m}; B_{m,\ell,m} ]
	= \expec\left[ K_m(Z_{[1:m]}) \, K_{m-\ell+1}(Z_{[m:(2m-\ell)]}); B_{m,\ell,m} \right].
	\]
	On the right-hand side, we replace the indicator of the event $B_{m,\ell,m}$ first by $\1_{m,\ell,m}$ and then by $\1_{m,1,m}$, using Lemmas~\ref{lem:EBmlm1mlm} and \ref{lem:E11diff}, respectively. We find, for $a >2$, by the triangle inequality and H\"older's inequality,
	\begin{align*}
		\lefteqn{
			\left|
			\expec[ \Xi_{\ell,m}; B_{m,\ell,m} ]
			- \expec\left[ K_m(Z_{[1:m]}) \, K_{m-\ell+1}(Z_{[m:(2m-\ell)]})  \1_{m,1,m} \right]
			\right|
		} \\
		&\le \|K\|^2_a \expec\left[\left|\1_{B_{m,\ell,m}} - \1_{m,\ell,m}\right|\right]^{1-2/a} 
		+ \|K\|^2_a \expec\left[\left|\1_{m,\ell,m}-\1_{m,1,m}\right|\right]^{1-2/a} \\
		&\le \|K\|^2_a  \left[ \left( 
		2(2q^2-q) \frac{\ell-1}{(m-1)(m-\ell+1)} \right)^{1-2/a}
		+\left( (q+1)q \frac{\ell-1}{(m+1)m}
		\right)^{1-2/a} \right]\\
		&\le 2(5q^2-q)^{1-2/a}  \|K\|^2_a \left( \frac{\ell - 1}{(m-1)(m-\ell+1)} \right)^{1-2/a} .
	\end{align*}
	The top-$q$ order statistics of $Z_{[m:(2m-\ell)]}$ are the same as those of $Z_{[m:(2m-1)]}$ unless at least one of the variables $Z_j$ for $j \in [(2m-\ell+1){:}(2m-1)]$ is larger than the $q$-largest order statistic of $Z_{[m:(2m-\ell)]}$. Let $C_{\ell,m}$ denote the latter event. It follows that, for $a>2$,
	\begin{align*}
		&\left|
		\expec\left[ K_m(Z_{[1:m]}) \, K_{m-\ell+1}(Z_{[m:(2m-\ell)]})  \1_{m,1,m} \right]\right.\\
		&\left.\;\;\;\;\;
		-
		\expec\left[ K_m(Z_{[1:m]}) \, K_{m}(Z_{[m:(2m-1)]}) \1_{m,1,m} \right]
		\right| 
		\\
		&\le \expec\left[
		\left| K_m(Z_{[1:m]}) \right|
		\left| K_{m-\ell+1}(Z_{[m:(2m-\ell)]}) - K_{m}(Z_{[m:(2m-1)]}) \right|
		\1_{m,1,m}
		\right] \\
		&\le 2 \|K\|^2_a \expec\left[\1_{m,1,m}; C_{\ell,m}\right]^{1-2/a} .
	\end{align*}
	Further, we have
	\begin{align*}
		&\expec\left[\1_{m,1,m}; C_{\ell,m}\right] \\
		=&
		\prob \biggl( \left\{\text{$Z_m$ is in the top-$q$ of $Z_{[1:m]}$ and/or $Z_{[m:(2m-1)]}$}\right\} \\
		&\cap \bigcup_{j=2m-\ell+1}^{2m-1} \left\{
		\text{$Z_j$ is larger than the $q$-largest order statistic of $Z_{[m:(2m-\ell)]}$}
		\right\} \biggr).
	\end{align*}
	The latter probability is bounded by $\ell-1$ times the sum
	\begin{align*}
		&\prob\left(\text{$Z_m$ is in the top-$q$ of $Z_{[1:m]}$ and} \right.\\
		&\left.\;\;\;\;\;\text{     $Z_{2m-\ell+1}$ is in the top-$q$ of $Z_{[(m+1):(2m-\ell+1)]}$}\right) \\
		+& \prob\left(\text{$Z_m$ and $Z_{2m-\ell+1}$ are both in the top-$q$ of $Z_{[m:(2m-\ell+1)]}$}\right)
	\end{align*}
	and thus by
	\[
	(\ell-1) \left(
	\frac{q}{m} \cdot \frac{q}{m-\ell+1} + \frac{q(q-1)}{(m-\ell+2)(m-\ell+1)}
	\right) 
	\le (2q^2-q) \frac{\ell-1}{(m-\ell+1)^2}.
	\]
	Since,
	\[
	\expec\left[ K_m(Z_{[1:m]}) \, K_{m}(Z_{[m:(2m-1)]}) \1_{m,1,m} \right]
	= \expec\left[ \Xi_{1,m}; N_{1,m} = 1\right],
	\]
	a combination of the previous inequalities yields
	\begin{align*}
		\lefteqn{\left| 
			\expec[ \Xi_{\ell,m}; N_{\ell,m} = 1 ] 
			- \ell \expec[ \Xi_{1,m}; N_{1,m} = 1 ] 
			\right|}
		\\
		&= \ell \left| \expec[ \Xi_{\ell,m}; B_{m,\ell,m}] - \expec\left[ K_m(Z_{[1:m]}) \, K_{m}(Z_{[m:(2m-1)]}) \1_{m,1,m} \right] \right| \\
		&\le \ell \|K\|^2_a \left[
		2(5q^2-q)^{1-2/a}   \left( \frac{\ell - 1}{(m-1)(m-\ell+1)} \right)^{1-2/a} \right.\\
		&\left.\qquad\qquad\qquad
		{} + 2 (2q^2-q)^{1-2/a} \left( \frac{\ell-1}{(m-\ell+1)^2} \right)^{1-2/a}
		\right] \\
		&\le 4(5q^2-q)^{1-2/a} \|K\|^2_a \ell \left(\frac{(\ell-1)}{(m-\ell+1)^2} \right)^{1-2/a}. 
	\end{align*}
\end{proof}
\begin{lemma}[Handling $\mathbb{I}_3$ in \eqref{eq:dcmp}]
	\label{lem:N2}
	If Condition \ref{Condition_kernel_scale-loc_integrability} is satisfied, then
	\[
	\mathbb{I}_3
	\le \|K\|^2_a \left( \frac{\ell(\ell-1)}{2} \right)^{1-2/a} \left( \frac{2(2q^2-q)}{(m-1)(m-\ell+1)} \right)^{1-2/a}
	\]
	for all $a > 2$.
\end{lemma}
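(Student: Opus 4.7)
The plan is to split $\mathbb{I}_3$ via H\"older's inequality into a kernel-moment factor and a probability factor for the event $\{N_{\ell,m}\ge 2\}$, bound each separately, and then combine.

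First, for $a>2$, by H\"older's inequality with exponents $a/2$ and $a/(a-2)$,
\[
\mathbb{I}_3
= \bigl|\expec[\Xi_{\ell,m}\,\mathbf{1}\{N_{\ell,m}\ge 2\}]\bigr|
\le
\expec\bigl[|\Xi_{\ell,m}|^{a/2}\bigr]^{2/a}
\cdot
\prob(N_{\ell,m}\ge 2)^{1-2/a}.
\]
For the first factor, apply Cauchy--Schwarz inside the expectation and invoke Lemma~\ref{lem:GPKm}, which gives that both $K_m(Z_{[1:m]})$ and $K_m(Z_{[(m-\ell+1):(2m-\ell)]})$ are equal in distribution to $K((Z_{q-j:q-1})_{j=1}^q)$. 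Hence
\[
\expec\bigl[|\Xi_{\ell,m}|^{a/2}\bigr]^{2/a}
\le
\expec\bigl[|K_m(Z_{[1:m]})|^a\bigr]^{1/a}
\expec\bigl[|K_m(Z_{[(m-\ell+1):(2m-\ell)]})|^a\bigr]^{1/a}
=\|K\|_a^2,
\]
which uses only Condition~\ref{Condition_kernel_scale-loc_integrability}.

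For the probability factor, I would use the elementary inequality $\mathbf{1}\{N_{\ell,m}\ge 2\}\le \binom{N_{\ell,m}}{2}$ to switch to a pairwise bound:
\[
\prob(N_{\ell,m}\ge 2)
\le \expec\binom{N_{\ell,m}}{2}
= \sum_{m-\ell+1\le i<j\le m}
\prob(\mathbf{1}_{i,\ell,m}=1,\mathbf{1}_{j,\ell,m}=1).
\]
By exchangeability of $Z_1,\ldots,Z_{2m-\ell}$ the summand is the same for every pair, so the sum equals $\binom{\ell}{2}$ times one representative pair probability. Each indicator splits as $\mathbf{1}_{i,\ell,m}\le \mathbf{1}\{Z_i\in\mathrm{top}\text{-}q\text{ of block 1}\}+\mathbf{1}\{Z_i\in\mathrm{top}\text{-}q\text{ of block 2}\}$, so the pair event decomposes into four contributions: both in top-$q$ of block~1, both in top-$q$ of block~2, and the two mixed cases. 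Each ``both same block'' contribution is $q(q-1)/[m(m-1)]$ by the standard top-$q$ probability for a GP sample, while each mixed contribution can be controlled by $q^2/[(m-1)(m-\ell+1)]$ (or recovered directly from one of the counting-variable inequalities Lemmas~\ref{lem:N}--\ref{lem:EBmlm1mlm}), leading to the claimed pair bound $2(2q^2-q)/[(m-1)(m-\ell+1)]$.

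Combining the two factors yields
\[
\mathbb{I}_3
\le \|K\|_a^2\left(\frac{\ell(\ell-1)}{2}\right)^{1-2/a}\left(\frac{2(2q^2-q)}{(m-1)(m-\ell+1)}\right)^{1-2/a},
\]
after absorbing the $(1-2/a)$-th power (the trivial bound $\prob(\cdot)\le 1$ keeps the exponent). The main obstacle is the careful accounting for the mixed cases in the pair probability, in particular tracking the correct denominator $(m-1)(m-\ell+1)$ rather than the cruder $(m-1)^2$, which is where the combinatorics of the overlap of two $m$-blocks sharing $\ell$ indices matters. Everything else is routine application of H\"older and exchangeability.
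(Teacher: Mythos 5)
Your proposal is correct and follows the paper's own proof: both apply H\"older's inequality (with the Cauchy--Schwarz step via Lemma~\ref{lem:GPKm}) to get $\mathbb{I}_3 \le \|K\|_a^2\,\prob(N_{\ell,m}\ge 2)^{1-2/a}$, and then bound the probability via the pairwise counting inequality $\prob(N_{\ell,m}\ge 2)\le\binom{\ell}{2}\expec[\1_{m-1,\ell,m}\1_{m,\ell,m}]$ (this is precisely what Lemma~\ref{lem:N} delivers, given exchangeability of the indicators over the overlap) followed by the pair bound $\expec[\1_{m-1,\ell,m}\1_{m,\ell,m}]\le 2(2q^2-q)/[(m-1)(m-\ell+1)]$ from Lemma~\ref{lem:E11}. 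Your sketch of the pair bound --- splitting each indicator into its two block events and handling the same-block and mixed contributions separately --- reproduces the paper's proof of Lemma~\ref{lem:E11}.
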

\begin{proof}
	Since the kernel satisfies Condition \ref{Condition_kernel_scale-loc_integrability}, we have by H\"older's inequality
	\[
	\left| \expec[ \Xi_{\ell,m} ; N_{\ell,m} \ge 2 ] \right|
	\le \expec[ |\Xi_{\ell,m}| ; N_{\ell,m} \ge 2 ] \le \|K\|^2_a \prob(N_{\ell,m} \ge 2)^{1-2/a}
	\]
	for $a >2$. By Lemma~\ref{lem:N}, we have
	\[
	\prob(N_{\ell,m} \ge 2) \le \frac{\ell(\ell-1)}{2} \expec[\1_{m-1,\ell,m} \1_{m,\ell,m}].
	\]
	In view of Lemma~\ref{lem:E11}, we conclude
	\[
	\left| \expec[ \Xi_{\ell,m} ; N_{\ell,m} \ge 2 ] \right|
	\le \|K\|^2_a \left( \frac{\ell(\ell-1)}{2} \right)^{1-2/a} \left( \frac{2(2q^2-q)}{(m-1)(m-\ell+1)} \right)^{1-2/a}.
	\qedhere
	\]
\end{proof}
We now develop the required counting variable inequalities. In doing so, the following preliminary lemma will be useful a number of times. No originality is claimed.

\begin{lemma}
	\label{lem:N}
	Consider a counting variable $N = \sum_{i=1}^n \1_i$ with indicator variables $\1_1,\ldots,\1_n$ satisfying $\expec[\1_i \1_j] \le \eps$ for all $i, j \in [1{:}n]$ and $i \ne j$. Then
	\begin{equation*}
		\begin{aligned}
			2 \prob(N \ge 2) 
			&\le \expec(N) - \prob(N = 1) \\
			&\le 2 \left( \expec(N) - \prob(N \ge 1) \right) \\
			&\le n(n-1)\eps. 
		\end{aligned}
	\end{equation*}
\end{lemma}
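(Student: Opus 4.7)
The plan is to express every quantity in the three inequalities in terms of the point masses $p_k := \prob(N = k)$ for $k \in [0{:}n]$. Then each of the three inequalities reduces to a sign check on a sum $\sum_{k \ge 2} c_k p_k$ with simple integer coefficients, each provable term by term. The only non-elementary ingredient is the second factorial moment $\expec[N(N-1)]$, which I would introduce specifically to absorb the pairwise hypothesis $\expec[\1_i \1_j] \le \eps$ via the identity $\expec[N(N-1)] = \sum_{i \ne j} \expec[\1_i \1_j]$.

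First, I would record the two algebraic identities $\expec(N) - \prob(N = 1) = \sum_{k \ge 2} k p_k$ and $\expec(N) - \prob(N \ge 1) = \sum_{k \ge 2} (k-1) p_k$, so that both quantities are manifestly non-negative and supported on $\{k \ge 2\}$. The leftmost inequality $2\prob(N \ge 2) \le \expec(N) - \prob(N = 1)$ then follows from $k \ge 2$ in every term. The middle inequality, after substitution of the two identities above, becomes $\sum_{k \ge 2}(k-2) p_k \ge 0$, which again holds term by term.

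For the rightmost inequality, I would bound $\expec[N(N-1)] \le n(n-1)\eps$ via the pairwise hypothesis on the indicators, and then compare with $2(\expec(N) - \prob(N \ge 1)) = 2\sum_{k \ge 2}(k-1) p_k$. The inequality $k(k-1) \ge 2(k-1)$ for $k \ge 2$ shows that $\expec[N(N-1)] = \sum_{k \ge 2} k(k-1) p_k$ dominates the latter sum, completing the chain.

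No step presents a genuine obstacle; the lemma is purely combinatorial. The only conceptual point worth flagging is that the right bridge between the moment quantities and the pairwise bound $\expec[\1_i \1_j] \le \eps$ is the factorial moment $\expec[N(N-1)]$, rather than the ordinary second moment $\expec[N^2]$.
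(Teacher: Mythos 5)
Your proof is correct and is essentially the paper's argument in different clothing: the paper's ``case-by-case analysis'' establishes the pointwise chain $\1\{N\ge 2\}\le\tfrac12(N-\1\{N=1\})\le N-\1\{N\ge1\}\le\tfrac12 N(N-1)$, which is exactly your termwise comparison of coefficients against the $p_k$, and the paper's $\tfrac12(\expec[N^2]-\expec[N])$ is precisely $\tfrac12\expec[N(N-1)]$, bounded via the same off-diagonal double sum.
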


\begin{proof}
	A case-by-case analysis reveals
	\[ 
	\1\{N \ge 2\} 
	\le \frac{1}{2}(N - \1\{N=1\})
	\le N - \1\{N \ge 1\} 
	\le \frac{1}{2} N(N-1)
	\]
	and thus
	\[
	\prob(N \ge 2)
	\le \frac{1}{2} \left( \expec(N) - \prob(N=1) \right)
	\le \expec(N) - \prob(N \ge 1) 
	\le \frac{1}{2} \left(\expec(N^2) -  \expec(N)\right).
	\]
	But since $\1_i^2 = \1_i$, we have
	\[
	\expec(N^2) = \sum_{i=1}^n \sum_{j=1}^n \expec[\1_i \1_j]
	= \sum_{i=1}^n \expec[\1_i] + \sum_{i,j \in [1:n], i \ne j} \expec[\1_i \1_j]
	\le \expec(N) + n(n-1) \eps,
	\]
	and thus
	\[
	\frac{1}{2} \left(\expec(N^2) -  \expec(N)\right) 
	\le \frac{n(n-1)}{2} \eps,
	\]
	yielding the stated inequalities.
\end{proof}

Recall the indicator variables $\1_{i,\ell,m}$ in \eqref{eq:1ilm}.
\begin{lemma}
	\label{lem:E11}
	We have
	\begin{equation*}
		\expec[\1_{m-1,\ell,m} \1_{m,\ell,m}]
		\le \frac{2(2q^2-q)}{(m-1)(m-\ell+1)}.
	\end{equation*}
\end{lemma}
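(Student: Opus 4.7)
The plan is to dominate the product of indicators by a sum of four simpler cross-product events via a union bound, and to bound each one either by an exchangeability count or by isolating disjoint sets of $Z_i$'s to gain independence.

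Writing $S_1 = [1{:}m]$ and $S_2 = [(m-\ell+1){:}(2m-\ell)]$, whose overlap $[(m-\ell+1){:}m]$ has size $\ell \ge 2$ (so that both $m-1,m \in S_1 \cap S_2$), the definition gives the pointwise bound $\1_{i,\ell,m} \le \1\{Z_i \in \mathcal{T}_1\} + \1\{Z_i \in \mathcal{T}_2\}$ with $\mathcal{T}_j := \text{top-}q(S_j)$. Multiplying and taking expectations decomposes the quantity of interest into four probabilities $P(Z_{m-1} \in \mathcal{T}_{j_1}, Z_m \in \mathcal{T}_{j_2})$ for $(j_1,j_2) \in \{1,2\}^2$.

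For the two ``same-set'' terms $(1,1)$ and $(2,2)$, exchangeability of the $m$ iid variables within $S_j$ gives the exact value $\binom{m-2}{q-2}/\binom{m}{q} = q(q-1)/(m(m-1))$. For the two ``cross'' terms, the key observation is that because $m-1$ and $m$ sit inside the overlap, each indicator can be relaxed to one whose competition set avoids the variable indexed by the other. Concretely, for $(j_1,j_2)=(1,2)$ I will use rank-in-subset $\le$ rank-in-superset to write
\begin{align*}
\1\{Z_{m-1} \in \mathcal{T}_1\} &\le \1\{Z_{m-1} \in \text{top-}q([1{:}(m-1)])\}, \\
\1\{Z_m \in \mathcal{T}_2\} &\le \1\{Z_m \in \text{top-}q(\{m\} \cup [(m+1){:}(2m-\ell)])\}.
\end{align*}
The two relaxed events depend on the disjoint index sets $[1{:}(m-1)]$ and $\{m\}\cup[(m+1){:}(2m-\ell)]$ and are therefore independent, with marginals $q/(m-1)$ and $q/(m-\ell+1)$ by exchangeability. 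This gives $P(Z_{m-1} \in \mathcal{T}_1, Z_m \in \mathcal{T}_2) \le q^2/[(m-1)(m-\ell+1)]$; the $(2,1)$ term is handled symmetrically by using the subsets $(S_2 \setminus \{m\})$ and $\{m\} \cup [1{:}(m-\ell)]$, which are again disjoint.

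Summing the four bounds and using $m(m-1) \ge (m-1)(m-\ell+1)$ to control the same-set terms, I will obtain
\[
\expec[\1_{m-1,\ell,m}\1_{m,\ell,m}]
\le \frac{2q(q-1)}{(m-1)(m-\ell+1)} + \frac{2q^2}{(m-1)(m-\ell+1)}
= \frac{2(2q^2-q)}{(m-1)(m-\ell+1)},
\]
which is the stated bound. The only non-routine step is identifying the right partition of indices that makes the cross-term relaxations simultaneously (i) valid one-sided dominations and (ii) supported on disjoint index sets; once that partition is in place, the remainder is bookkeeping.
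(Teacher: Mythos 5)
Your proof is correct and follows essentially the same route as the paper: expand $\expec[\1_{m-1,\ell,m}\1_{m,\ell,m}]$ by a union bound into the four cross-product probabilities, evaluate the two same-set terms exactly by exchangeability as $q(q-1)/[m(m-1)]$, bound the two cross terms by relaxing each top-$q$ event to a disjointly-indexed subset so that independence gives $q^2/[(m-1)(m-\ell+1)]$, and finally use $m \ge m-\ell+1$ to collect terms. The only cosmetic difference is that the paper collapses the four terms to two via a symmetry argument before bounding them, whereas you bound all four explicitly.
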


\begin{proof}
	By symmetry,
	\begin{align*}
		\lefteqn{
			\expec[\1_{m-1,\ell,m} \1_{m,\ell,m}]
		} \\
		&= \prob\left(\text{$Z_{m-1}$ and $Z_m$ are both in the top-$q$ of $Z_{[1:m]}$ or $Z_{[(m-\ell+1):(2m-\ell)]}$}\right) \\
		&\le 2 \prob\left(\text{$Z_{m-1}$ and $Z_m$ are both in the top-$q$ of $Z_{[1:m]}$}\right) \\
		&\quad {} +
		2 \prob\left(\text{$Z_{m-1}$ is in the top-$q$ of $Z_{[1:m]}$}\right.\\
		&\:\: \left. \text{\hspace{1.1cm}        and $Z_{m}$ is in the top-$q$ of $Z_{[(m-\ell+1):(2m-\ell)]}$}\right).
	\end{align*}
	The first probability on the right-hand side is equal to $q(q-1)/[m(m-1)]$ while the second one is bounded by
	\begin{multline*}
		\prob\left(\text{$Z_{m-1}$ is in the top-$q$ of $Z_{[1:(m-1)]}$ and $Z_{m}$ is in the top-$q$ of $Z_{[m:(2m-\ell)]}$}\right) \\ 
		= \frac{q}{m-1} \cdot \frac{q}{m-\ell+1}.
	\end{multline*}
	Add the two bounds and note that $1/m <1/(m-\ell+1)$ to conclude the proof.
\end{proof}

\begin{lemma}
	\label{lem:E11diff}
	We have
	\begin{equation*}
		\expec\left[\left| \1_{m,\ell,m}  - \1_{m,1,m} \right|\right]
		\le (q+1)q \frac{\ell-1}{(m+1)m}.
	\end{equation*}
\end{lemma}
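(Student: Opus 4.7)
} The starting observation is that both $\1_{m,\ell,m}$ and $\1_{m,1,m}$ share the same first-block sub-event $A = \{Z_m \text{ is in the top-}q \text{ of } Z_{[1:m]}\}$, so on writing $B_\ell = \{Z_m \text{ is in the top-}q \text{ of } Z_{[(m-\ell+1):(2m-\ell)]}\}$ and $B_1 = \{Z_m \text{ is in the top-}q \text{ of } Z_{[m:(2m-1)]}\}$, one has $|\1_{m,\ell,m} - \1_{m,1,m}| \le \1_{B_\ell \symdif B_1}$, and the problem reduces to bounding $\prob(B_\ell \symdif B_1)$. I would then decompose the two second blocks into a common core $C = \{Z_m, \ldots, Z_{2m-\ell}\}$ and two disjoint extras $D = \{Z_{m-\ell+1}, \ldots, Z_{m-1}\}$ and $E = \{Z_{2m-\ell+1}, \ldots, Z_{2m-1}\}$, each of size $\ell-1$, i.i.d.\ and independent of $C$. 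The symmetry $D \eqd E$ immediately yields $\prob(B_\ell \symdif B_1) = 2\prob(B_\ell \setminus B_1)$.

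Next, I would control $\prob(B_\ell \setminus B_1)$ by a union bound over $E$. The key geometric remark is that on $B_\ell \setminus B_1$ the rank of $Z_m$ (counted from the top) in $C \cup E$ is strictly greater than its rank in $C \cup D$; since $C$ contributes equally to both counts, this forces at least one element of $E$ to exceed $Z_m$. Exploiting that the members of $E$ are identically distributed and independent of $C \cup D$, the union bound collapses to
\[
\prob(B_\ell \setminus B_1) \le (\ell-1) \, \prob\bigl(B_\ell \cap \{Z_{2m-1} > Z_m\}\bigr).
\]

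The remaining and decisive step is a small exchangeability count. The $m+1$ variables in $(C \cup D) \cup \{Z_{2m-1}\}$ are i.i.d.\ and continuous, so the joint top-ranks of $(Z_m, Z_{2m-1})$ in this set are uniform over ordered pairs of distinct values $(r_1, r_2) \in \{1, \ldots, m+1\}^2$. The event $\{Z_{2m-1} > Z_m\}$ translates to $r_2 < r_1$, and under that event removing $Z_{2m-1}$ from the set lowers the rank of $Z_m$ by one, so $B_\ell$ becomes $r_1 - 1 \le q$. The favourable ordered pairs are precisely those with $r_2 < r_1 \le q+1$, numbering $\binom{q+1}{2}$ out of $(m+1)m$. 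Putting the pieces together gives
\[
\expec\bigl[|\1_{m,\ell,m}-\1_{m,1,m}|\bigr] \le 2(\ell-1) \cdot \frac{\binom{q+1}{2}}{m(m+1)} = (q+1)q \, \frac{\ell-1}{(m+1)m}.
\]
The only delicate point is realising that augmenting $C \cup D$ by a single element of $E$ reduces the problem to a clean two-dimensional rank count on $m+1$ exchangeable variables; everything else is bookkeeping.
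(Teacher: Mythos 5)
Your proof is correct and follows essentially the same route as the paper: rewrite the difference of indicators as the indicator of a symmetric difference, use the $D \eqd E$ symmetry to pass to one direction, apply a union bound over the $\ell-1$ extra variables, and finish with a rank-counting (exchangeability) argument on $m+1$ i.i.d.\ continuous variables yielding the factor $\binom{q+1}{2}/[(m+1)m]$. The only cosmetic difference is that the paper analyzes $B_1 \setminus B_\ell$ while you analyze $B_\ell \setminus B_1$, and the paper phrases the final count as ``both in the top-$(q+1)$ with $Z_{m-1} > Z_m$'' rather than via uniform rank pairs, but the underlying computation is identical.
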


\begin{proof}
	For events $A$ and $B$, we have $\left|\1_A - \1_B\right| = \1_{A \symdif B}$ where the symmetric difference $A \symdif B = (A \setminus B) \cup (B \setminus A)$ is the event that exactly one of $A$ and $B$ occurs.
	It follows that $\left| \1_{m,\ell,m}  - \1_{m,1,m} \right|$ is the indicator of the event that exactly one of the two events
	\begin{align*}
		&\{ \text{$Z_m$ is in the top-$q$ of $Z_{[1:m]}$ or $Z_m$ is in the top-$q$ of $Z_{[(m-\ell+1):(2m-\ell)]}$}\} \: \text{and} \\
		&\{ \text{$Z_m$ is in the top-$q$ of $Z_{[1:m]}$ or $Z_m$ is in the top-$q$ of $Z_{[m:(2m-1)]}$}\}
	\end{align*}
	occurs. In that case, $Z_m$ is either in the top-$q$ of $Z_{[(m-\ell+1):(2m-\ell)]}$ but not in the top-$q$ of $Z_{[m:(2m-1)]}$ or the other way around, and thus
	\begin{align*}
		&\lefteqn{
			\expec\left[\left| \1_{m,\ell,m}  - \1_{m,1,m} \right|\right]
		} \\
		&\le \prob\left(
		\text{$Z_m$ is in the top-$q$ of $Z_{[(m-\ell+1):(2m-\ell)]}$ but not in that of $Z_{[m:(2m-1)]}$}
		\right) \\
		&{} +
		\prob\left(\text{$Z_m$ is in the top-$q$ of $Z_{[m:(2m-1)]}$ but not in that of $Z_{[(m-\ell+1):(2m-\ell)]}$}
		\right) \\
		&= 2 \prob\left(
		\text{$Z_m$ is in the top-$q$ of $Z_{[m:(2m-1)]}$ but not in that of $Z_{[(m-\ell+1):(2m-\ell)]}$}
		\right).
	\end{align*}
	For the probability of the latter event, note that there always needs to be at least one of the $\ell-1$ random variables of  $Z_{[(m-\ell+1):(m-1)]}$ larger than $Z_m$ for $Z_m$ not to be in the top-$q$ of $Z_{[(m-\ell+1):(2m-\ell)]}$. Thus, by the union bound, the probability is bounded by $(\ell-1)$ times the probability that $Z_{m-1}$ and $Z_m$ are both in the top-$(q+1)$ of $Z_{[(m-1):(2m-1)]}$ while $Z_{m-1}$ is still larger than $Z_m$. It follows that
	\[
	\left| \expec[\1_{m,\ell,m} - \1_{m,1,m}] \right|
	\le 2 (\ell-1) \cdot \frac{(q+1)q}{2(m+1)m} 
	= (q+1)q \frac{\ell-1}{(m+1)m}. \qedhere
	\] 
\end{proof}

\begin{lemma}
	\label{lem:PN1diff}
	We have
	\[
	\left| \prob(N_{\ell,m} \ge 1) - \ell \prob(N_{1,m} = 1) \right|
	\le 3q^2 \cdot \frac{\ell(\ell-1)}{(m-1)(m-\ell+1)}.
	\]
\end{lemma}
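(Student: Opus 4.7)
The plan is to interpolate between $\prob(N_{\ell,m}\ge 1)$ and $\ell\prob(N_{1,m}=1)$ through the expectation $\expec[N_{\ell,m}]$, which by symmetry factors as $\ell\expec[\1_{m,\ell,m}]$, and then apply the counting lemmas already established. First I would observe that when $\ell=1$, the counting variable $N_{1,m}$ is just the single indicator $\1_{m,1,m}$, so $\prob(N_{1,m}=1)=\expec[\1_{m,1,m}]$. The triangle inequality then gives
\[
\bigl|\prob(N_{\ell,m}\ge 1)-\ell\prob(N_{1,m}=1)\bigr|
\le \bigl|\prob(N_{\ell,m}\ge 1)-\expec[N_{\ell,m}]\bigr|
+\ell\bigl|\expec[\1_{m,\ell,m}]-\expec[\1_{m,1,m}]\bigr|.
\]

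Next I would bound each of the two pieces using earlier results. For the first piece, Lemma~\ref{lem:N} applied with $n=\ell$ indicators $\1_{i,\ell,m}$, $i\in[(m-\ell+1){:}m]$, and $\eps=\expec[\1_{m-1,\ell,m}\1_{m,\ell,m}]$ (the common value by exchangeability), yields $|\expec[N_{\ell,m}]-\prob(N_{\ell,m}\ge 1)|\le \tfrac{\ell(\ell-1)}{2}\eps$; inserting the bound of Lemma~\ref{lem:E11} produces
\[
\bigl|\expec[N_{\ell,m}]-\prob(N_{\ell,m}\ge 1)\bigr|
\le \frac{(2q^2-q)\,\ell(\ell-1)}{(m-1)(m-\ell+1)}.
\]
For the second piece, Lemma~\ref{lem:E11diff} directly gives
\[
\ell\bigl|\expec[\1_{m,\ell,m}]-\expec[\1_{m,1,m}]\bigr|
\le \frac{q(q+1)\,\ell(\ell-1)}{m(m+1)}.
\]

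Finally I would combine the two bounds under a common denominator. A quick check shows $m(m+1)\ge (m-1)(m-\ell+1)$ (equivalently $m+m\ell-\ell+1\ge 0$), so the second term is bounded by $q(q+1)\ell(\ell-1)/[(m-1)(m-\ell+1)]$. Adding gives coefficient $(2q^2-q)+(q^2+q)=3q^2$, which is exactly the constant claimed in the lemma. There is no real obstacle here beyond identifying the correct middle quantity $\expec[N_{\ell,m}]$ and verifying the elementary denominator comparison; everything else is an immediate application of Lemmas~\ref{lem:N},~\ref{lem:E11}, and~\ref{lem:E11diff}.
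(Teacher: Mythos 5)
Your proof is correct and follows exactly the paper's own argument: interpolate through the middle quantity $\expec[N_{\ell,m}]=\ell\expec[\1_{m,\ell,m}]$, bound the first difference by Lemma~\ref{lem:N} together with Lemma~\ref{lem:E11}, the second by Lemma~\ref{lem:E11diff}, and absorb both terms into the $(m-1)(m-\ell+1)$ denominator to obtain the constant $3q^2$. Nothing to add.
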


\begin{proof}
	In view of Lemma~\ref{lem:N}, we have
	\[
	\left| \prob(N_{\ell,m} \ge 1) - \ell \expec[\1_{m,\ell,m}] \right|
	\le \frac{\ell(\ell-1)}{2} \expec[\1_{m-1,\ell,m} \1_{m,\ell,m}]
	\]
	and thus, since $N_{1,m} = \1_{m,1,m}$,
	\begin{align*}
		\left| \prob(N_{\ell,m} \ge 1) - \ell \prob(N_{1,m} = 1) \right|&
		\le \frac{\ell(\ell-1)}{2} \expec[\1_{m-1,\ell,m} \1_{m,\ell,m}]\\
		&\:\:\:\:
		+ \ell \expec\left[\left|\1_{m,\ell,m}  - \1_{m,1,m} \right|\right] .
	\end{align*}
	The two expectations on the right-hand side were treated in Lemmas~\ref{lem:E11} and~\ref{lem:E11diff}, respectively. Combine the bounds to obtain that
	\begin{align*}
		\lefteqn{\left| \prob(N_{\ell,m} \ge 1) - \ell \prob(N_{1,m} = 1) \right|} \\
		&\le \frac{\ell(\ell-1)}{2} \cdot \frac{2(2q^2-q)}{(m-1)(m-\ell+1)}
		+ \ell \cdot (q+1)q \frac{\ell-1}{(m+1)m} \\
		&\le  \frac{3q^2\ell(\ell-1)}{(m-1)(m-\ell+1)}.
	\end{align*}
\end{proof}

\begin{lemma}
	\label{lem:EBmlm1mlm}
	We have
	\begin{equation*}
		\expec\left[\left|\1_{B_{m,\ell,m}} - \1_{m,\ell,m}\right|\right]
		\le 2(2q^2-q) \frac{\ell-1}{(m-1)(m-\ell+1)}.
	\end{equation*}
\end{lemma}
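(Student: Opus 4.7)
The plan is to exploit the nesting $B_{m,\ell,m} \subseteq \{\1_{m,\ell,m} = 1\}$, which is immediate from the definition \eqref{eq:Nlm1Bilm}. Consequently $\1_{B_{m,\ell,m}} \le \1_{m,\ell,m}$ pointwise, so the absolute value drops out:
\[
\left|\1_{B_{m,\ell,m}} - \1_{m,\ell,m}\right| = \1_{m,\ell,m} - \1_{B_{m,\ell,m}}.
\]
This difference equals $1$ precisely on the event that $\1_{m,\ell,m} = 1$ while at least one additional indicator $\1_{j,\ell,m}$ for $j \in [(m-\ell+1){:}(m-1)]$ is also equal to $1$; otherwise both indicators are zero or both are one.

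From here I would apply the union bound and symmetry. Writing
\[
\1_{m,\ell,m} - \1_{B_{m,\ell,m}}
\le \sum_{j=m-\ell+1}^{m-1} \1_{j,\ell,m} \1_{m,\ell,m},
\]
taking expectations yields a sum of $\ell - 1$ terms of the form $\expec[\1_{j,\ell,m}\1_{m,\ell,m}]$. Because the underlying $Z_i$ are i.i.d., the joint distribution of $(\1_{i,\ell,m},\1_{j,\ell,m})$ depends only on the positions of $i, j$ within the overlap window $[(m-\ell+1){:}m]$ via a symmetric role, so each such pair expectation admits the same bound as the one proved for $(j,m) = (m-1,m)$ in Lemma~\ref{lem:E11}, namely $2(2q^2 - q)/[(m-1)(m-\ell+1)]$. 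Multiplying by $\ell - 1$ gives the desired inequality.

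There is no real obstacle: the only subtle point is justifying that Lemma~\ref{lem:E11}'s bound transfers from the pair $(m-1, m)$ to a general pair $(j, m)$ with $j \in [(m-\ell+1){:}(m-1)]$. This follows by relabelling, since the two events defining $\1_{j,\ell,m}$ and $\1_{m,\ell,m}$ depend symmetrically on indices belonging to both blocks $[1{:}m]$ and $[(m-\ell+1){:}(2m-\ell)]$, and the argument in the proof of Lemma~\ref{lem:E11} only used that $j, m$ are two distinct indices in the intersection of the two blocks.
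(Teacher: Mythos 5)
Your proof is correct and follows exactly the paper's approach: recognize the one-sided inclusion $\1_{B_{m,\ell,m}} \le \1_{m,\ell,m}$, express the difference as the event of a second active indicator, apply the union bound with symmetry to reduce to $(\ell-1)\expec[\1_{m-1,\ell,m}\1_{m,\ell,m}]$, and invoke Lemma~\ref{lem:E11}.
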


\begin{proof}
	We have $\1_{m,\ell,m} \ge \1_{B_{m,\ell,m}}$ while the difference between the two indicators is equal to the indicator of the event that $\1_{m,\ell,m} = 1$ holds together with $\1_{j,\ell,m} = 1$ for some $j \in [(m-\ell+1){:}(m-1)]$. By the union bound, we obtain
	\[
	\expec\left[\left|\1_{B_{m,\ell,m}} - \1_{m,\ell,m}\right|\right]
	\le (\ell-1) \expec[ \1_{m-1,\ell,m} \1_{m,\ell,m} ].
	\]
	Apply Lemma~\ref{lem:E11} to conclude.
\end{proof}

\section{Proof of Proposition \ref{Prop:Hajek_distribution}}
\label{sec:proof_Hajek}
\begin{proof}[Proof of Proposition \ref{Prop:Hajek_distribution}] 
	The proof is divided in two parts. The first part will deal with the expression for the asymptotic variance in~\eqref{eq:sigmaK} and the second part with the asymptotic normality of the H\'ajek projection in~\eqref{Part2}.
	
	\paragraph{Part 1: asymptotic variance.}
	
	Write $\overline{K}_m := K_m - u(\gamma)$. We start by calculating the limit
	\[
	\lim_{r \to \infty} r \, \zeta_{r+1,1}, \quad \text{where} \quad
	\zeta_{r+1,1} = \expec \left[
	\overline{K}_{r+1}(Z_1,\ldots,Z_r,Z)
	\cdot
	\overline{K}_{r+1}(Z_1',\ldots,Z_r',Z)
	\right]
	\]
	and $Z,Z_1,Z_1',Z_2,Z_2',\ldots$ are independent $\GP(\gamma)$ random variables. The expectation defining $\zeta_{r+1,1}$ is finite by Condition \ref{Condition_kernel_scale-loc_integrability} and the Cauchy--Schwarz inequality. We will prove that 
	\begin{equation}
		\label{eq:sigmagK2}
		\lim_{r \to \infty} r \, \zeta_{r+1,1}=\sigma_{\gamma,K}^2 = \int_0^\infty
		\left( \expec \left[
		\overline{K}_{q+1} \bigl(
		(Z_{q-j:q-1})_{j=1}^q, h_\gamma(S_q/x)
		\bigr)
		\right] \right)^2 \, \diff x<\infty,
	\end{equation}
	where $Z_{0:q-1} = 0$ and $Z_{1:q-1} \le \ldots \le Z_{q-1:q-1}$ are the order statistics of an independent $\GP(\gamma)$ random sample $Z_1,\ldots,Z_{q-1}$, while $S_q = E_1+\cdots+E_q$ with $E_1,\ldots,E_q$ an independent unit-exponential random sample, independent of $Z_1,\ldots,Z_{q-1}$.
	
	To prove \eqref{eq:sigmagK2}, we will show that we can rewrite $r \, \zeta_{r+1,1}$ as
	\begin{equation}
		\label{eq:rzeta_int}
		r \, \zeta_{r+1,1} = \int_{0}^{\infty} \left( \expec \left[
		\overline{K}_{q+1} \bigl(
		(h_\gamma(Y_{q-j:q-1}))_{j=1}^q, h_\gamma(r U_{q:r} / x)
		\bigr) \cdot \1 \{ r U_{q:r} > x \}
		\right] \right)^2 \diff x,
	\end{equation}
	where $Y_{0:q-1} := 1$, $Y_{1:q-1} \le \ldots \le Y_{q-1:q-1}$ are the order statistics of a standard Pareto random sample of size $q-1$ and $U_{1:r} \le \ldots \le U_{r:r}$ are the order statistics of a uniform$(0, 1)$ random sample of size $r$, the two samples being independent.  Since $r U_{q:r}$ converges in distribution to $S_q$, the remainder of the proof then consists in justifying why the limit can be interchanged with the expectation and the integral. We will do this in two steps: first we will show that the expectation inside the integral in \eqref{eq:rzeta_int} converges, i.e., for all $x > 0$, we have the convergence
	\begin{multline}
		\label{eq:expconv}
		\lim_{r \to \infty} \expec \left[
		\overline{K}_{q+1} \bigl(
		(h_\gamma(Y_{q-j:q-1}))_{j=1}^q, h_\gamma(rU_{q:r}/x)
		\bigr) \cdot \1 \{ rU_{q:r} > x \}
		\right] \\
		= \expec \left[
		\overline{K}_{q+1} \bigl(
		(h_\gamma(Y_{q-j:q-1}))_{j=1}^q, h_\gamma(S_q/x)
		\bigr) \cdot \1 \{ S_q > x \}
		\right],
	\end{multline}
	where $S_q$ is as in \eqref{eq:sigmagK2}. Second, we will apply Lebesque's dominated convergence theorem to show that the integral itself converges.

	We start with the proof of \eqref{eq:rzeta_int}. Condition on $Z$ to get
	\begin{align*}
		r \, \zeta_{r+1,1}
		&= \int_0^\infty \expec \left[ \overline{K}_{r+1}(Z_1,\ldots,Z_r,z)
		\cdot
		\overline{K}_{r+1}(Z_1',\ldots,Z_r',z)
		\right]
		\cdot r \cdot \diff \prob(Z \le z) \\
		&= \int_0^\infty \left( \expec \left[ \overline{K}_{r+1}(Z_1,\ldots,Z_r,z) \right] \right)^2
		\cdot r \cdot \diff \prob(Z \le z) \\
		&= \int_0^\infty \left( \expec \left[ \overline{K}_{q+1}\bigl((Z_{r-j+1:r})_{j=1}^q,z\bigr) \right] \right)^2
		\cdot r \cdot \diff \prob(Z \le z), 
	\end{align*}
	in terms of the ascending order statistics $Z_{1:r} \le \ldots \le Z_{r:r}$ of $Z_1,\ldots,Z_r$. Here we used the fact that the kernel $K_m$ only depends on the top-$q$ values of its argument. Note that if $Y$ is a standard Pareto random variable, then the distribution of $h_\gamma(Y)$ is $\GP(\gamma)$. Let $Y_{1:r} \le \ldots \le Y_{r:r}$ be the ascending order statistics of a standard Pareto random sample $Y_1,\ldots,Y_r$. It follows that
	\begin{align*}
		r \, \zeta_{r+1,1}
		&= \int_1^\infty \left( \expec \left[ 
		\overline{K}_{q+1} \bigl( 
		(h_\gamma(Y_{r-j+1:r}))_{j=1}^q, h_\gamma(y)
		\bigr)
		\right]\right)^2 \cdot r \frac{\diff y}{y^2} \\
		&= \int_{1/r}^\infty \left( \expec \left[ 
		\overline{K}_{q+1} \bigl( 
		(h_\gamma(Y_{r-j+1:r}))_{j=1}^q, h_\gamma(r t)
		\bigr)
		\right]\right)^2 \frac{\diff t}{t^2},
	\end{align*}
	where we have substituted $y = rt$ in the last step. The vector of top-$q$ standard Pareto order statistics satisfies the distributional representation
	\[
	(Y_{r-j+1:r})_{j=1}^q \eqd
	(Y_{q-j:q-1} / U_{q:r})_{j=1}^q
	\]
	where $Y_{0:q-1} := 1$ and where $U_{1:r} \le \ldots \le U_{r:r}$ denote the ascending order statistics of an independent uniform$(0, 1)$ sample $U_1,\ldots,U_r$, independent of $Y_1,\ldots,Y_r$; note that $1/U_{q:r}$ has the same distribution as $Y_{r-q+1:r}$. The function $h_\gamma$  satisfies the functional equation
	\[
	\forall x, y > 0, \qquad
	h_\gamma(y/x) = \frac{h_\gamma(y) - h_\gamma(x)}{x^\gamma}.
	\]
	By location-scale invariance of the kernel,
	\begin{align*}
		\lefteqn{\overline{K}_{q+1} \bigl( 
			(h_\gamma(Y_{q-j:q-1} / U_{q:r}))_{j=1}^q, h_\gamma(r t)
			\bigr)}\\
		&= \overline{K}_{q+1} \bigl(
		(h_\gamma(Y_{q-j:q-1}))_{j=1}^q, U_{q:r}^\gamma h_\gamma(r t) + h_\gamma(U_{q:r})
		\bigr) \\
		&= \overline{K}_{q+1} \bigl(
		(h_\gamma(Y_{q-j:q-1}))_{j=1}^q, h_\gamma(r U_{q:r} t)
		\bigr).
	\end{align*}
	Following the substitution $x = 1/t$, we obtain
	\begin{align*}
		r \, \zeta_{r+1,1}
		&= \int_{1/r}^\infty \left( \expec \left[
		\overline{K}_{q+1} \bigl(
		(h_\gamma(Y_{q-j:q-1}))_{j=1}^q, h_\gamma(r U_{q:r} t)
		\bigr)
		\right] \right)^2 \frac{\diff t}{t^2} \\
		&= \int_{0}^r \left( \expec \left[
		\overline{K}_{q+1} \bigl(
		(h_\gamma(Y_{q-j:q-1}))_{j=1}^q, h_\gamma(r U_{q:r} / x)
		\bigr)
		\right] \right)^2 \diff x.
	\end{align*}
	By independence and Fubini's theorem, the expectation inside the integral is equal to
	\begin{multline*}
		\expec \left[
		\overline{K}_{q+1} \bigl(
		(h_\gamma(Y_{q-j:q-1}))_{j=1}^q, h_\gamma(r U_{q:r} / x)
		\bigr)
		\right] \\
		=
		\int_0^1 \expec \left[
		\overline{K}_{q+1} \bigl(
		(h_\gamma(Y_{q-j:q-1}))_{j=1}^q, h_\gamma(r u / x)
		\bigr) 
		\right] \, \diff \prob(U_{q:r} \le u).
	\end{multline*}
	The expectation inside the integral on the right-hand side is zero as soon as $r u / x \le 1$, because in that case
	\begin{multline*}
		\expec \left[
		\overline{K}_{q+1} \bigl(
		(h_\gamma(Y_{q-j:q-1}))_{j=1}^q, h_\gamma(r u / x)
		\bigr) 
		\right]\\
		=
		\expec \left[
		\overline{K}_{q} \bigl( (h_\gamma(Y_{q-j:q-1}))_{j=1}^q \bigr)
		\right] =
		\expec \left[
		\overline{K}_{q} \bigl( (Z_{q-j:q-1})_{j=1}^q \bigr)
		\right]
		= 0.
	\end{multline*}
	Therefore, we obtain
	\begin{multline*}
		\expec \left[
		\overline{K}_{q+1} \bigl(
		(h_\gamma(Y_{q-j:q-1}))_{j=1}^q, h_\gamma(r U_{q:r} / x)
		\bigr)
		\right] \\
		= 
		\expec \left[
		\overline{K}_{q+1} \bigl(
		(h_\gamma(Y_{q-j:q-1}))_{j=1}^q, h_\gamma(r U_{q:r} / x)
		\bigr) \cdot \1 \{ r U_{q:r} > x \}
		\right]. 
	\end{multline*}
	Equation \eqref{eq:rzeta_int} follows as the integral is equal to $0$ for values of $x$ higher than $r$.
	
	Next, we show \eqref{eq:expconv} and eventually \eqref{eq:sigmagK2}. The well-known representation of uniform order statistics in terms of partial sums of unit-exponential random variables, i.e., $U_{q:r} \overset{d}=S_q/S_{r+1}$ where, for any integer $a$, $S_a = E_1+\cdots+E_a$ is the sum of independent standard exponential random variables independent of $Y_1,\ldots,Y_{q-1}$, implies the weak convergence
	\begin{equation}
		\label{eq:rUrS}
		r U_{q:r} \stackrel{d}{\longrightarrow} S_q = E_1+\cdots+E_q,
		\qquad r \to \infty.
	\end{equation}
	The question is whether in \eqref{eq:expconv}, we can switch the limit and the expectation.
	
	Under a suitable Skorokhod construction, we can assume that \eqref{eq:rUrS} holds almost surely. By continuity of the kernel, the integrand inside the expectation on the left-hand side then converges almost surely to the integrand on the right-hand side. To show uniform integrability in \eqref{eq:expconv} and eventually the validity of switching the limit and integral in \eqref{eq:sigmagK2}, it is sufficient to show that the expectation of the square is bounded by an integrable function $c(x)$, i.e.,
	\[
	\limsup_{r \to \infty}
	\expec \left[
	\overline{K}_{q+1}^2 \bigl(
	(h_\gamma(Y_{q-j:q-1}))_{j=1}^q, h_\gamma(rU_{q:r}/x)
	\bigr) \cdot \1 \{ rU_{q:r} > x \}
	\right]
	< c(x) < \infty,
	\]
	where $c(x)$ is a function on $[0,+\infty)$ such that $\int_0^{+\infty}c(x)\diff x<\infty$. Then not only the relation \eqref{eq:expconv} follows immediately, but also the relation \eqref{eq:sigmagK2} follows from the Lebesgue dominance convergence theorem.
	
	To do so, we rely on a change-of-measure, replacing $rU_{q:r}/x$ on the event $\{r U_{q:r} > x\}$ by a standard Pareto random variable, and compensating by the likelihood ratio. For $0 < x < r$, let $p_{r,x}$ be the probability density function of $r U_{q:r}/x$ and let $Y$ be a standard Pareto random variable, independent of $Y_1,\ldots,Y_{q-1}$. We have
	\begin{align*}
		\lefteqn{
			\expec \left[
			\overline{K}_{q+1}^2 \bigl(
			(h_\gamma(Y_{q-j:q-1}))_{j=1}^q, h_\gamma(rU_{q:r}/x)
			\bigr) \cdot \1 \{ rU_{q:r} > x \}
			\right]
		} \\
		&= \int_1^\infty 
		\expec\left[
		\overline{K}_{q+1}^2 \bigl(
		(h_\gamma(Y_{q-j:q-1}))_{j=1}^q, h_\gamma(y)
		\bigr) \right] \cdot p_{r,x}(y) \, \diff y \\
		&= \int_1^\infty 
		\expec\left[
		\overline{K}_{q+1}^2 \bigl(
		(h_\gamma(Y_{q-j:q-1}))_{j=1}^q, h_\gamma(y)
		\bigr) \right] \cdot p_{r,x}(y) \cdot y^2 \, \frac{\diff y}{y^2} \\
		&=
		\expec \left[
		\overline{K}_{q+1}^2 \bigl(
		(h_\gamma(Y_{q-j:q-1}))_{j=1}^q, h_\gamma(Y)
		\bigr) \cdot p_{r,x}(Y) \cdot Y^2
		\right] \\
		&= 	\expec \left[
		\overline{K}_{q}^2 \bigl(
		h_\gamma(Y_1), \ldots, h_{\gamma}(Y_{q-1}), h_\gamma(Y)
		\bigr) \cdot p_{r,x}(Y) \cdot Y^{2-\epsilon} \cdot Y^{\epsilon}
		\right],
	\end{align*}
	for some $0<\epsilon<1/2$.
	
	Recall that the distribution of $(h_\gamma(Y_1), \ldots, h_\gamma(Y_{q-1}), h_\gamma(Y))$ is equal to the one of an independent $\GP(\gamma)$ random sample of size $q$. 
	In view of Condition \ref{Condition_kernel_scale-loc_integrability}, and the fact that $\expec[Y^{2\epsilon}]<\infty$, it is sufficient to show that
	\begin{equation*}
		\sup_{r > 2q} \sup_{y \ge 1} p_{r,x}(y) \cdot y^{2-\epsilon} \le \tilde c(x),
	\end{equation*}
	for some finite function $\tilde c(x)$ such that $\int_0^{+\infty}\tilde c(x)\diff x<\infty$. Since the distribution of $U_{q:r}$ is $\operatorname{Beta}(q, r+1-q)$, we have, for $y \in [1, \infty)$,
	\begin{align}
		\nonumber
		p_{r,x}(y) 
		&= \frac{\diff}{\diff y} \prob(r U_{q:r} / x \le y) \\
		\nonumber
		&= \frac{\diff}{\diff y} \prob(U_{q:r} \le xy/r) \\
		\nonumber
		&= (x/r) \cdot \frac{\Gamma(r+1)}{\Gamma(q) \Gamma(r+1-q)} \cdot (xy/r)^{q-1} \cdot (1 - xy/r)^{r-q} \cdot \1\{xy < r\} \\
		\label{eq:prxy}
		&= \frac{x^q}{(q-1)!} \cdot \frac{r!}{(r-q)! r^q} \cdot y^{q-1} \cdot (1 - xy/r)^{r-q} \cdot \1\{xy < r \}.
	\end{align}
	As $1 - h \le e^{-h}$ for real $h$, we get, for $r > 2q $, the inequalities
	\begin{equation*}
		p_{r,x}(y)
		\le \frac{x^q}{(q-1)!} \cdot y^{q-1} \cdot e^{-(r-q)xy/r}
		\le \frac{x^q}{(q-1)!} \cdot y^{q-1} \cdot e^{-xy/2}.
	\end{equation*}
	Hence,
	\[   p_{r,x}(y) \cdot y^{2-\epsilon} \leq \frac{x^q}{(q-1)!} \cdot y^{q+1-\epsilon} \cdot e^{-xy/2}, \quad \text{ for all } r \geq q, \text{ all } x>0 \text{ and all } y \geq 1.
	\]
	With straightforward calculation, we get that the upper bound is maximized at $y=1 \vee2(q+1-\epsilon)/x$. It follows that we can choose
	\begin{align*}
		\tilde{c}(x)=
		\begin{cases}
			\dfrac{(2(q+1-\epsilon))^{q+1-\epsilon}}{(q-1)!e^{q+1-\epsilon}} \cdot x^{-1+\epsilon} &\text{ if $0<x\leq 2(q+1-\epsilon)$,}
			\\[1em]
			\dfrac{x^{q}}{(q-1)!} \cdot e^{-x/2} &\text{ if $x>2(q+1-\epsilon)$,}
		\end{cases}
	\end{align*}
	which satisfies $\int_{0}^{\infty} \tilde{c}(x) \, \diff x<\infty$. This concludes the proof of \eqref{eq:expconv} and \eqref{eq:sigmagK2}.
	
	\paragraph{Part 2: asymptotic normality.}
	
	It remains to show that the H\'ajek projection $\hat{U}_{n,Z}^m$ in \eqref{eq:Hajek} of $U_{n,Z}^m - \theta$ is asymptotically normal.
	Note that $\hat{U}_{n, Z}^m$ is a centered row sum of a triangular array of row-wise i.i.d.\ random variables, with total variance $\zeta_{m, 1} m^2/n$. If Lyapunov's condition is met, following the central limit theorem, we have
	\begin{equation*}
		\frac{k}{\sqrt{n \zeta_{m,1}}} \hat{U}_{n,Z}^m \td N(0, 1), \qquad n \to \infty.
	\end{equation*}
	Therefore, we need to show that for some $\delta>0$ we have
	\begin{equation}
		\label{eq:Lyapunov}
		\lim_{m \to \infty} \frac{1}{\left(\sqrt{n\zeta_{m,1}}\right)^{2+\delta}} \cdot n
		\expec \left[ \bigl|\hat{K}_m(Z)\bigr|^{2+\delta}\right]  = 0.
	\end{equation}
	First, by Jensen's inequality,
	\[
	\bigl| \hat{K}_m(z) \bigr|^{2+\delta}
	\le \expec \left[
	\left| \overline{K}_m(Z_1,\ldots,Z_{m-1},z) \right|^{2+\delta}
	\right].
	\]	
	Second, by Fubini's theorem, location-scale invariance of the kernel and the stability property of GP order statistics,
	\[
	\expec \left[ \bigl| \hat{K}_m(Z) \bigr|^{2+\delta} \right]
	\le \expec \left[
	\left| \overline{K}_m(Z_1,\ldots,Z_m) \right|^{2+\delta}
	\right]
	= \expec \left[
	\left| \overline{K}_q(Z_1,\ldots,Z_q) \right|^{2+\delta}
	\right],
	\]
	which implies that the term $\expec [ | \hat{K}_m(Z) |^{2+\delta} ]$ is uniformly bounded over all $m$. 
	
	Since $\lim_{m \to \infty} m \zeta_{m,1} = \sigma_{K}^2(\gamma) > 0$, the relation~\eqref{eq:Lyapunov} holds provided we can choose $\delta>0$ such that
	\[ 
	\lim_{n\to\infty} \left(\frac{m}{n}\right)^{(2+\delta)/2} n = 0.
	\]
	From Condition \ref{degree_sequence}, this can be achieved by choosing $\delta>2/\epsilon$.
\end{proof}

\section{Proof of Theorem~\ref{Prop:bias}}
We start by listing two preliminary lemmas.

\begin{lemma}{(Potter's bounds; see \cite[Proposition B.1.9]{deHaanFerreira2006extreme}).} \label{Potter}
	Let $f\in RV_{\alpha}$. For all $\delta_1, \delta_2>0$ there exists $t_0=t_0(\delta_1, \delta_2)$ such that for any positive $t$ and $x$ satisfying $t \geq t_0$ and $tx \geq t_0$, 
	\[
	(1-\delta_1)x^{\alpha}\min \left( x^{\delta_2}, x^{-\delta_2} \right) 
	\leq \frac{f(tx)}{f(t)} 
	\leq (1+\delta_1)x^{\alpha}\max \left( x^{\delta_2}, x^{-\delta_2} \right).
	\]
\end{lemma}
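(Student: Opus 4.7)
The plan is to reduce to the slowly varying case via the factorisation $f(x) = x^{\alpha} L(x)$ with $L \in RV_0$, and then to exploit the Karamata representation of slowly varying functions. Since $f(tx)/f(t) = x^{\alpha} L(tx)/L(t)$, it suffices to prove that, for every $\eta_1, \eta_2 > 0$, there exists $t_0$ such that whenever $t \ge t_0$ and $tx \ge t_0$,
\[
(1-\eta_1)\min(x^{\eta_2}, x^{-\eta_2}) \le \frac{L(tx)}{L(t)} \le (1+\eta_1)\max(x^{\eta_2}, x^{-\eta_2}),
\]
after which the stated bound on $f(tx)/f(t)$ follows by multiplying by $x^{\alpha}$.

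To prove the slowly varying bound, I would invoke the Karamata representation theorem: there exist measurable functions $c(\cdot)$ and $\epsilon(\cdot)$ with $c(u) \to c_0 \in (0, \infty)$ and $\epsilon(u) \to 0$ as $u \to \infty$, and some $a > 0$, such that
\[
L(u) = c(u) \exp\!\left(\int_a^u \frac{\epsilon(s)}{s}\, \diff s\right), \qquad u \ge a.
\]
Then
\[
\frac{L(tx)}{L(t)} = \frac{c(tx)}{c(t)} \cdot \exp\!\left(\int_t^{tx} \frac{\epsilon(s)}{s}\, \diff s\right) = \frac{c(tx)}{c(t)} \cdot \exp\!\left(\int_1^{x} \frac{\epsilon(tu)}{u}\, \diff u\right),
\]
after the substitution $s = tu$. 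Since $c(u) \to c_0 > 0$, I can choose $t_0$ large enough so that $(1 - \eta_1/2) \le c(tx)/c(t) \le (1 + \eta_1/2)$ whenever $t \wedge (tx) \ge t_0$. For the exponential factor, I enlarge $t_0$ so that $|\epsilon(u)| \le \eta_2$ for all $u \ge t_0$.

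The main technical point is the uniform control of the integral $\int_1^x \epsilon(tu)/u\, \diff u$ in the two cases $x \ge 1$ and $0 < x < 1$. When $x \ge 1$ and $t \ge t_0$, then $tu \ge t \ge t_0$ for every $u \in [1, x]$, so $|\epsilon(tu)| \le \eta_2$, giving
\[
\left|\int_1^x \frac{\epsilon(tu)}{u}\, \diff u\right| \le \eta_2 \ln x,
\]
and hence the exponential is trapped between $x^{-\eta_2}$ and $x^{\eta_2}$. When $0 < x < 1$ and $tx \ge t_0$, the substitution $v = 1/u$ (or simply reversing the limits of integration) reduces the bound to $\eta_2|\ln x| = \eta_2 \ln(1/x)$ for every $u \in [x, 1]$, since $tu \ge tx \ge t_0$ on that range; the exponential then lies between $x^{\eta_2}$ and $x^{-\eta_2}$. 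In both cases the exponential factor is bounded above by $\max(x^{\eta_2}, x^{-\eta_2})$ and below by $\min(x^{\eta_2}, x^{-\eta_2})$. Combining with the bounds on $c(tx)/c(t)$ and choosing $\eta_1, \eta_2$ slightly smaller than $\delta_1, \delta_2$ (to absorb the constant factor $(1 \pm \eta_1/2)$), the desired two-sided bound follows. The main obstacle — ensuring that the Karamata $\epsilon$ function is uniformly small along the whole integration path — is handled precisely by the joint conditions $t \ge t_0$ and $tx \ge t_0$ in the statement.
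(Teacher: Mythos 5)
Your argument is correct. The paper does not prove this lemma at all --- it is stated as a known result with a pointer to \cite[Proposition~B.1.9]{deHaanFerreira2006extreme} --- so there is no in-paper proof to compare against; what you have written is the standard textbook derivation (reduce to the slowly varying factor $L$, apply the Karamata representation, and use the joint conditions $t \ge t_0$ and $tx \ge t_0$ to keep $|\epsilon|$ small along the whole integration path in both the cases $x \ge 1$ and $0 < x < 1$), which is essentially the proof given in the cited reference. The only cosmetic remark is that the final bookkeeping is slightly cleaner if you take $\eta_2 = \delta_2$ outright and only shrink $\eta_1$ to absorb the $c(tx)/c(t)$ factor, but your choice works equally well since $\max(x^{\eta_2}, x^{-\eta_2}) \le \max(x^{\delta_2}, x^{-\delta_2})$ and $\min(x^{\eta_2}, x^{-\eta_2}) \ge \min(x^{\delta_2}, x^{-\delta_2})$ whenever $\eta_2 \le \delta_2$.
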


\begin{lemma} \label{uniform_integrability}
	Let $Y^*_{1:m}\leq Y^*_{2:m} \leq \ldots \leq Y^*_{m:m}$ be the order statistics of an independent random sample of Pareto$(1)$ random variables. For integer $q \ge 1 $ and for a real number $\nu<q$, the sequence $\{(Y_{m-q+1:m}^*/m)^\nu : m \ge m_0 \}$ is uniformly integrable for some $m_0$ depending on $q$ and $\nu$.
\end{lemma}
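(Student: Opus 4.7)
The plan is to reduce to an explicit moment computation for a Beta random variable. First, use the fact that if $Y$ is Pareto$(1)$, then $1/Y$ is Uniform$(0,1)$, so $Y^*_{m-q+1:m} \eqd 1/U_{q:m}$, where $U_{q:m}$ denotes the $q$-th smallest order statistic in a Uniform$(0,1)$ sample of size $m$. Recall that $U_{q:m}$ has the $\operatorname{Beta}(q, m-q+1)$ distribution, so for any $s<q$ we have the exact formula
\[
\expec[U_{q:m}^{-s}]
= \frac{B(q-s,\,m-q+1)}{B(q,\,m-q+1)}
= \frac{\Gamma(q-s)}{\Gamma(q)} \cdot \frac{\Gamma(m+1)}{\Gamma(m-s+1)}.
\]

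Second, I would invoke the asymptotic Stirling estimate $\Gamma(m+1)/\Gamma(m-s+1) = m^{s}(1+\oh(1))$ as $m\to\infty$, which holds for any fixed real $s<q$. Multiplying by $m^{-s}$ then gives
\[
\expec\bigl[(Y^*_{m-q+1:m}/m)^{s}\bigr]
= m^{-s}\,\expec[U_{q:m}^{-s}]
\longrightarrow \frac{\Gamma(q-s)}{\Gamma(q)},
\qquad m \to \infty,
\]
so this moment is finite and uniformly bounded in $m$ for all $m\geq m_0(q,s)$.

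Third, to upgrade this bound to uniform integrability of the sequence $\{(Y^*_{m-q+1:m}/m)^{\nu}\}_{m\ge m_0}$, I split on the sign of $\nu$. If $\nu\leq 0$, then $(Y^*_{m-q+1:m}/m)^{\nu} = (mU_{q:m})^{-\nu}$, and the above formula with $s=-\nu\cdot(1+\delta)$ (any $\delta>0$) shows that every positive-order moment of $mU_{q:m}$ is uniformly bounded in $m$ (in fact it tends to $\Gamma(q-\nu(1+\delta))/\Gamma(q)$ since $q-\nu(1+\delta)>q$), so uniform integrability is immediate. If $0<\nu<q$, then $q/\nu>1$ and I can pick any $\delta\in(0,\,q/\nu-1)$ so that $s:=\nu(1+\delta)<q$; applying the moment bound at this $s$ gives
\[
\sup_{m\geq m_0} \expec\bigl[\bigl((Y^*_{m-q+1:m}/m)^{\nu}\bigr)^{1+\delta}\bigr]
=\sup_{m\geq m_0}\expec\bigl[(Y^*_{m-q+1:m}/m)^{s}\bigr] < \infty,
\]
and the standard de~la~Vall\'ee--Poussin criterion (or the classical $L^{1+\delta}$-boundedness criterion) yields uniform integrability.

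I do not expect a real obstacle here: the argument is essentially a one-line moment computation plus Stirling. The only small care is to check that the convergence is uniform in $m\geq m_0$, which follows from monotonicity of $\Gamma(m+1)/(m^{s}\Gamma(m-s+1))$ and the Stirling expansion, and to explicitly separate the cases $\nu>0$ (where $s$ must be strictly less than $q$) and $\nu\leq 0$ (where all positive-order moments are uniformly controlled).
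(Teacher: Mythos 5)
Your proof is correct, but it takes a genuinely different — and arguably cleaner — route than the paper. The paper represents $Y^*_{m-q+1:m}$ via partial sums of exponentials ($Y^*_{m-q+1:m} \eqd S_{m+1}/S_q$) and then uses Cauchy--Schwarz (for $\nu\le 0$) and a three-exponent H\"older inequality (for $0<\nu<q$) to separate the Erlang numerator from the normalized Gamma denominator $H_m=S_{m+1}/(m+1)$, finally invoking a Gamma-ratio limit for $\expec[H_m^{2\nu}]$. You instead write $Y^*_{m-q+1:m}\eqd 1/U_{q:m}$ with $U_{q:m}\sim\operatorname{Beta}(q,m-q+1)$, which gives the \emph{exact} moment $\expec[(Y^*_{m-q+1:m}/m)^s]=\tfrac{\Gamma(q-s)}{\Gamma(q)}\cdot\tfrac{\Gamma(m+1)}{m^s\Gamma(m-s+1)}$ for all $s<q$ (and $m\ge q$), and then Stirling gives convergence to $\Gamma(q-s)/\Gamma(q)$. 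This collapses both sign cases into a single computation: for $\nu\le 0$ every order $s=\nu(1+\delta)<0<q$ works, and for $0<\nu<q$ one picks $\delta\in(0,q/\nu-1)$ so $s=\nu(1+\delta)<q$; in either case $L^{1+\delta}$-boundedness gives uniform integrability. The exactness of the Beta moment is what lets you avoid the H\"older gymnastics. One small caveat: in your $\nu\le0$ case you write ``$s=-\nu(1+\delta)$'' but then correctly state the limit $\Gamma(q-\nu(1+\delta))/\Gamma(q)$, which corresponds to $s=\nu(1+\delta)$ in your own formula for $\expec[(Y^*_{m-q+1:m}/m)^s]$; the sign in that line should be fixed, though the conclusion is unaffected.
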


\begin{proof}[Proof of Lemma \ref{uniform_integrability}]
	Note that $Y^*_{m-q+1:m}\overset{d}{=} \frac{S_{m+1}}{S_q}$, where $S_l=\sum_{i=1}^{l}E_i$ for independent unit-exponential random variables $E_i$. Since it is only the distribution of $Y^*_{m-q+1:m}$ that matters, we can change probability spaces to ensure that the equality in distribution is in fact an equality between random variables.
	
	We first deal with $(Y_{m-q+1:m}^*/m)^{\nu}$ for $\nu \leq 0$. We have
	\begin{align*}
		(Y_{m-q+1:m}^*/m)^{\nu} \leq 
		\left(\frac{S_q}{(m+1)^{-1}\sum_{j=1}^{m+1}E_j}  \right)^{-\nu}.
	\end{align*}
	Note that $H_m=(m+1)^{-1}\sum_{i=j}^{m+1} E_j $ is distributed as a Gamma$(m+1, m+1)$ random variable with value $m+1$ for both the shape and rate parameter. By Cauchy's inequality,
	\begin{align*}
		\expec\left[(Y_{m-q+1:m}^*/m)^{2\nu}\right] \leq 
		\sqrt{ \expec[ H_m^{2\nu} ]} \sqrt{ \expec[ S_q^{-2\nu} ]}.
	\end{align*}
	We have $\expec[ S_{q}^{-2\nu} ]< \infty$ for all $\nu \leq 0$ while by direct calculation, provided $m>-2\nu-1$,
	\begin{align*}
		\expec[ H_m^{2\nu} ] = \frac{(m+1)^{-2\nu} \Gamma(m+2\nu+1)}{\Gamma(m+1)} \to 1 \text{ as } m \to \infty.
	\end{align*}
	Therefore, there exists $m_0(\nu)$ such that the sequence $\{ ( Y^*_{m-q+1:m}/m )^{\nu}, m \ge m_0(\nu) \}$ is bounded in $\mathcal{L}^2$ and thus uniformly integrable.
	
	Next we consider $( Y^*_{m-q+1:m}/m )^{\nu}$ for $0 < \nu< q$.
	Define $p_0=(3\nu+q)/4$ and note that $\nu < p_0 < (\nu+q)/2$. Also, define $p_1= (\nu+q)/(2p_0)$, which satisfies $p_1 > 1$, and define $z > 1$ by $1/z+1/p_1=1$. 
	For sufficiently large integer $m$, we find, using H\"older's inequality and the inequality $1/m \le 2 / (m+1)$, that
	\begin{align*}
		\expec\left[(Y^*_{m-q+1:m}/m)^{p_0}\right] 
		&\leq 2^{p_0} \expec \left[( H_m/S_q )^{p_0} \right]\\
		&\leq 2^{p_0} \left( \expec[H_m^{zp_0}] \right)^{1/z}  \left(\expec[S_q^{-p_1p_0}]\right)^{1/p_1}.
	\end{align*}
	From $p_0p_1 = (\nu+q)/2 < q$ it follows that $\expec[ S_q^{-p_0p_1} ] < \infty$. Similar to the case where $\nu<0$, for any $a>0$ we have $\sup_{m \ge 1} \expec[H_m^{a}]  <\infty$.
	It follows that 
	\[
	\sup_{m \ge q} \expec\left[( Y^*_{m-q+1:m}/m )^{p_0} \right]< \infty.
	\]
	Since $p_0 > \nu > 0$, the sequence $\{( Y^*_{m-q+1:m}/m )^{\nu}, m \ge q\}$ is uniformly integrable.
\end{proof}

\begin{proof}[Proof of Theorem~\ref{Prop:bias}]
	Following Theorem~\ref{Main-XU-result}, we only need to show that 
	Condition~\ref{biasterm_with_limit_BK} holds with the function $B_K(\rho,\gamma)$ given in \eqref{eq:BK}.

	Recall from the discussion in Section \ref{DoA_bias} that
	\begin{align*}
		\sqrt{k}\left(  { U}^{m}_{n}-{ U}^{m}_{n,Z} \right)
		&=\sqrt{k}   \binom{n}{m}^{-1}  \sum_{I \subset [n], |I| = m} \left( K_m\left(X_I\right)-  K_m\left(Z_I\right) \right)  \\
		&=\sqrt{k}A(m )   \binom{n}{m}^{-1}  \sum_{I \subset [n], |I| = m} \xi^I_m\\
		&=:\sqrt{k}A(m ) U^m_{n, \xi},
	\end{align*}
	where $ \xi^I_m=K_m\left(X_I\right)-  K_m\left(Z_I\right)$. In what follows, we write $\xi_m$ to indicate a generic $\xi^I_m$ where the index set is not important.
	
By Hoeffding's decomposition,
\begin{align*}
	\var  \left( U^m_{n, \xi} \right)
	= \sum_{\ell=1}^m p_{n,m}(\ell) \, \Psi_{m,\ell}
\end{align*}
with $p_{n,m}(\ell)$ in \eqref{eq:hyppmf}
and $\Psi_{m,\ell}=\cov (\xi^I_m, \xi^{I'}_m )$ for index sets $I, I' \subset [n]$ such that $|I'|=|I|=m$ and $|I \cap I'|=\ell$. Below we will show the limit relation
\begin{equation}
	\label{eq:varto0}
	\sum_{\ell=1}^m p_{n,m}(\ell) \Psi_{m,\ell} \to 0, \qquad \text{as } n \to \infty.
\end{equation}
Then it follows from Chebychev's inequality that
\[
U^m_{n, \xi} - \expec[\xi_m] \pto 0, \qquad \text{as } n \to \infty.
\]
In addition, we will show that
\begin{equation}
	\label{eq:Etophi}
	\lim_{m \to \infty} \expec[\xi_m] =\expec \left[\lim_{m \to \infty}\xi_m \right] = B_{K}(\rho, \gamma),
\end{equation}
with $B_K(\rho,\gamma)$ defined as in \eqref{eq:BK}. Clearly, the last two limit relations imply that $U^m_{n,\xi}$ converges in probability to $B_{K}(\rho, \gamma)$, yielding Condition~\ref{biasterm_with_limit_BK}. It remains to show \eqref{eq:varto0}--\eqref{eq:Etophi}.
	
	We start by proving \eqref{eq:Etophi}. By exploiting the scale and location invariance of $K$ and Lemma \ref{lem:GPKm} for the $\GP(\gamma)$ distributed random variables, $\xi_m^I$ is equal in distribution to
	\begin{equation*}
		\xi_m := \frac{K\left( \{ h(Y_{q-j: q-1};Y^{*}_{m-q+1:m}) \}_{j=1}^q \right)   -K\left(\{h_{\gamma}(Y_{q-j:q-1}\right)\}_{j=1}^q) }{A(m)},
	\end{equation*}
	where $Y_{0: q-1}=1$, $Y_{q-1:q-1} \geq Y_{q-2:q-1} \geq \ldots \geq Y_{1: q-1}$ are Pareto(1) order statistics independent of $Y^*_{m-q+1:m}$ and $h(x;t)$ is the function defined as
	\[
	h(x;t)=\frac{U(tx)-U(t)}{t \, U'(t)}.
	\] 
Under Condition~\ref{derivative_tail_quantile}, following Theorem~2.3.12 in~\citep{deHaanFerreira2006extreme}, we have, for $x>0$,
$$\lim_{t \to \infty}h(x;t)=\lim_{t \to \infty}\frac{U(tx)-U(t)}{t \, U'(t)}=h_\gamma(x), $$ 
and 
\begin{equation}
\label{eq:Hgammarho}
\lim_{t \to \infty}\frac{h(x;t)-h_\gamma(x)}{A(t)}
=  \int_{1}^{x}s^{\gamma-1}\int_{1}^{s}u^{\rho-1} \, \diff u \, \diff s
=: H_{\gamma, \rho}(x).
\end{equation}
	
By Condition~\ref{kernel_derivatives} the function $K$ is differentiable and all $q$ first-order partial derivatives $\dot{K}_1,\ldots,\dot{K}_q$ are continuous. The mean value theorem for several variables implies that there exists a point $\hat{Z}_{m}$ on the line segment between the points $ \{h(Y_{q-j:q-1}; Y^{*}_{m-q+1:m}) \}_{j=1}^{q} $ and $  \{Z_{q-j:q-1} \}_{j=1}^{q}=\{h_{\gamma}\left( Y_{q-j:q-1}\right)\}_{j=1}^q$ such that as $m \to \infty$,
	\begin{align} 
		\nonumber
		\xi_m
		&=\frac{A(Y^*_{m-q+1:m})}{A(m)}
		\sum_{j=1}^{q} \dot{K}_j(\hat{Z}_{m}) 
		\frac{h(Y_{q-j: q-1}; Y^{*}_{m-q+1:m} ) - h_{\gamma}\left(Y_{q-j:q-1} \right)  }{A(Y^*_{m-q+1:m})} \\ 
		&\to S_q^{-\rho} \sum_{j=1}^{q} \dot{K}_j\left( \{h_{\gamma}\left( Y_{q-j:q-1}\right) \}_{j=1}^q\right)
		H_{\gamma, \rho}(Y_{q-j: q-1})
		\label{xi}
	\end{align}
	almost surely, where $S_q$ is a properly constructed Erlang($q$) random variable independent of $Y_{q-1:q-1} \geq Y_{q-2:q-1} \geq \ldots \geq Y_{1: q-1}$. To see this, note that in a suitable Skorokhod construction, $m^{-1} Y^*_{m-q+1:m} \to S_q^{-1}$ almost surely. In addition, $ A(m x) / A(m)\to x^{\rho}$ as $m\to\infty$. Therefore, it follows by the extended continuous mapping theorem that $A(Y^*_{m-q+1:m}) / A(m) \to S_q^{-\rho}$ as $m \to \infty$.
	
	In view of \eqref{xi}, we show that the sequence $\{\xi_m\}_{m \ge m_0}$ is uniformly integrable for sufficiently large integer $m_0$. In particular, we will show that
	\[
	\sup_{m \ge m_0} \expec[ |\xi_m|^{1+\epsilon/2} ] < \infty
	\]
	for some positive integer $m_0$, with $\epsilon > 0$ as given in Condition~\ref{kernel_moment}.
	Write $\eta = 1+\epsilon/2$.
	We deal with $\xi_m$ on the event 
	\[
	D_m=\{ Y^*_{m-q+1:m} > t_0  \} 
	\]
	and its complement $D_m^c$ for $t_0>1$ to be defined later.
	
	Firstly, we handle $\expec( |\xi_m|^{\eta} ; D_m^c )$. By independence of $Y_{m-q+1:m}^*$ and $Y_1,\ldots,Y_{q-1}$, we have, by the Cauchy--Schwarz inequality, for any $t_0>1$,
	\begin{multline*}
		\expec( |\xi_m|^{\eta} ; D_m^c )
		\le \frac{2^\eta \expec[|K_m(X_I)|^{\eta} + |K_m(Z_I)|^{\eta}; D_m^c]}{|A(m)|^{\eta} } \\
		\le \frac{2^\eta}{|A(m)|^{\eta}} \left\{[\expec(|K_m(X_I)|^{2\eta})]^{1/2}  + [\expec(|K_m(Z_I)|^{2\eta})]^{1/2} \right\} [\prob(D_m^c)]^{1/2} \to 0,
	\end{multline*}
	as $m \to \infty$. To obtain the final step, we use the following three facts: $\expec(|K_m(Z_I)|^{2\eta})$ is finite and does not depend on $m$, $ \sup_{m \ge q} \expec(|K_m(X_I)|^{2\eta}) =O(m^a)$ for some $a>0$ by Condition \ref{kernel_moment} and $\prob(D_m^c)$ tends to zero much faster than any power of $|A(m)|$ (exponentially versus polynomially).

	Next, we handle $\expec( |\xi_m|^{\eta} ; D_m )$. Consider again the expansion~\eqref{xi} of $\xi_m$ by the mean value theorem with
	\[ 
	\hat{Z}_m=(1-\hat{a})\{h(Y_{q-j:q-1}; Y^{*}_{m-q+1:m}) \}_{j=1}^{q} +\hat{a} \{h_\gamma(Y_{q-j:q-1}) \}_{j=1}^{q}
	\]
	for some random $0 < \hat{a} < 1$. Since $q=3$, we abbreviate $(\hat{Z}_{3-j:2})_{j=1}^3:=\hat{Z}_m$, where $\hat{Z}_{2:2} > \hat{Z}_{1:2} > \hat{Z}_{0:2} = 0$.
	
	Fix $\delta>0$ to be determined later. Since $|A|$ is regularly varying, the inequality in Lemma~\ref{Potter} implies that there exists some $t_0(\delta)>0$ such that
	\begin{align*}
		\left|\xi_m \right|^{\eta} \mathbb{I}_{D_m}& \leq (1+\delta) \left( \frac{Y^*_{m-2:m}}{m} \right)^{\eta(\rho \pm \delta)}   \left| \sum_{j=1}^{3} \dot{K}_{j} (\hat{Z}_m) \frac{ \tilde{h}(Y_{3-j:2})}{A(Y^*_{m-2:m})}    \right| ^{\eta}
	\end{align*}
	where $(x)^{\pm \delta}  $ is short-hand for $\max \{x^{\delta},x^{-\delta}\}$ and where $\tilde{h}(Y_{i:2})=  h(Y_{i: 2}; Y^{*}_{m-2:m} )-h_{\gamma}\left(Y_{i:2} \right)$. With straightforward calculation, we obtain
\[
	\dot{K}_1(x) = \frac{g'\left( \ln\left(\frac{x_1-x_2}{x_2-x_3}\right) \right)}{x_1 - x_2}, 
	\dot{K}_3(x) = \frac{g'\left( \ln\left(\frac{x_1-x_2}{x_2-x_3}\right) \right)}{x_2 - x_3},
	\dot{K}_2(x) = - \dot{K}_1(x)-\dot{K}_3(x).
\]
	
	By exploiting the derivative structure, we have
	\begin{align*}
		\lefteqn{\sum_{j=1}^{3} \dot{K}_{j} (\hat{Z}_m) \tilde{h}(Y_{3-j:2})} \\
		&=
		\dot{K}_1(\hat{Z}_m) \left( \tilde{h}(Y_{2:2}) - \tilde{h}(Y_{1:2}) \right)
		+
		\dot{K}_3(\hat{Z}_m) \left( \tilde{h}(Y_{0:2}) - \tilde{h}(Y_{1:2}) \right)	\\
		&= g' \left(\ln \left(
		\tfrac{\hat{Z}_{2:2} - \hat{Z}_{1:2}}{\hat{Z}_{1:2}-\hat{Z}_{0:2}}
		\right)\right)
		\left(
		\frac{\tilde{h}(Y_{2:2}) - \tilde{h}(Y_{1:2})}{\hat{Z}_{2:2} - \hat{Z}_{1:2}}
		+
		\frac{\tilde{h}(Y_{0:2}) - \tilde{h}(Y_{1:2})}{\hat{Z}_{1:2}-\hat{Z}_{0:2}}
		\right).
	\end{align*}
	It follows that
	\begin{multline*}
		\left|\xi_m \right|^{\eta} \mathbb{I}_{D_m}
		\leq (1+\delta) \left( \frac{Y^*_{m-2:m}}{m} \right)^{\eta(\rho \pm \delta)}  \left|  g'\left( \ln\left(\frac{
			\hat{Z}_{2:2}-\hat{Z}_{1:2}}{\hat{Z}_{1:2}-\hat{Z}_{0:2}}\right) \right) \right|^{\eta} \\ \cdot \left| \sum_{j=1}^{2}   \frac{\tilde{h}(Y_{3-j:2})-\tilde{h}(Y_{2-j:2})   }{\left( \hat{Z}_{3-j:2}-\hat{Z}_{2-j:2}\right)A(Y^*_{m-2:m})}  \right|^{\eta}.
	\end{multline*}

	By Lemma~\ref{Galton_kernel} below, any positive power of $$  \left|\sum_{j=1}^{2} \frac{\tilde{h}(Y_{3-j:2})-\tilde{h}(Y_{2-j:2})    } {\left(\hat{Z}_{3-j:2}-\hat{Z}_{2-j:2} \right)A(Y^*_{m-2:m})}\right| $$ can be bounded by an integrable random variable not depending on $m$, provided $Y^*_{m-2:m}>t_0$ for some sufficiently large $t_0$.	Consequently, to show that $\sup_{m>m_0} \expec |\xi_m|^{\eta} \mathbb{I}_{D_m}< \infty$ by the H\"older's inequality, it suffices to prove that
	\begin{align*}
		\sup_{m>m_0} \expec \left[ \left| g'\left( \ln\left(\frac{
			\hat{Z}_{2:2}-\hat{Z}_{1:2}}{\hat{Z}_{1:2}-\hat{Z}_{0:2}}\right) \right) \right|^{r\eta} \left( \frac{Y^*_{m-2:m}}{m} \right)^{r \eta(\rho\pm \delta)} \right] <\infty
	\end{align*}
	for some $r>1$. 
	
	Lemma~\ref{uniform_integrability} shows that $\left( Y^*_{m-2:m}/m \right)^{r\eta(\rho\pm \delta)}$ is uniformly integrable for any $r>1$, with choosing $\delta$ such that $\rho \pm \delta  \leq 3/(r\eta)$. Therefore, it remains to prove that there exists some $r>1$ such that
	\begin{equation}
		\label{eq:boundg}
		\sup_{m>m_0}\expec\left| g'\left( \ln\left(\frac{
			\hat{Z}_{2:2}-\hat{Z}_{1:2}}{\hat{Z}_{1:2}-\hat{Z}_{0:2}}\right) \right) \right|^{r \eta}< \infty.
	\end{equation}
	Note that the relation \eqref{eq:boundg} holds trivially if part~(b) of Condition~\ref{kernel_derivatives} holds. We show that it also holds if part~(a) of Condition~\ref{kernel_derivatives} holds. Write $\frac{b}{c}:=\frac{
		{Z}_{2:2}-{Z}_{1:2}}{{Z}_{1:2}-{Z}_{0:2}}$ and $\frac{d}{e}:=\frac{
		h(Y_{2:2};Y^{*}_{m-2:m} )-h(Y_{1:2};Y^{*}_{m-2:m} )}{h(Y_{1:2};Y^{*}_{m-2:m} )-h(Y_{0:2};Y^{*}_{m-2:m} )}$.
	Then $\frac{
		\hat{Z}_{2:2}-\hat{Z}_{1:2}}{\hat{Z}_{1:2}-\hat{Z}_{0:2}}=\frac{\hat{a} b+(1-\hat{a})d}{\hat{a} c+(1-\hat{a}) e}$ is bounded by $\frac{b}{c}$ and $\frac{d}{e}$. By the assumed monotonicity of $g'$, we have
	\begin{multline*}
		\left| g'\left( \ln\left(\frac{\hat{Z}_{2:2}-\hat{Z}_{1:2}}{\hat{Z}_{1:2}-\hat{Z}_{0:2}} \right) \right) \right|^{\eta r} \leq \left| g'\left( \ln\left(\frac{
			{Z}_{2:2}-{Z}_{1:2}}{{Z}_{1:2}-{Z}_{0:2}}\right) \right) \right|^{\eta r} \\
		+ \left| g'\left( \ln\left(\frac{
			h(Y_{2:2};Y^{*}_{m-2:m} )-h(Y_{1:2};Y^{*}_{m-2:m} )}{h(Y_{1:2};Y^{*}_{m-2:m} )-h(Y_{0:2};Y^{*}_{m-2:m} )}\right) \right) \right|^{\eta r}. 
	\end{multline*}
	Together with Condition \ref{kernel_derivatives}, it completes the proof of the relation \eqref{eq:boundg}.
	
	By combining the uniform integrability of $ \left|\xi_m\right|^{\eta} \mathbb{I}_{D_m}$ and $ \left|\xi_m\right|^{\eta} \mathbb{I}_{D_m^c}$, we conclude that $\xi_m$ is uniformly integrable. It follows that
	\begin{multline*}
		\lim_{m \to \infty} \expec [\xi_m]  = \expec \left[\lim_{m \to \infty} \xi_m\right] \\
		=  \expec\left[ S_3^{-\rho} \right]  \expec \left[ \sum_{j=1}^{3}\dot{K}_{(j)}\left( \{h_{\gamma}\left( Y_{3-j:2}\right) \}_{j=1}^3\right)H_{\gamma, \rho}\left(Y_{3-j,:2} \right) \right] = B_{K}(\rho, \gamma),
	\end{multline*}
	which completes the proof of \eqref{eq:Etophi}.
	
	Finally, we turn to prove \eqref{eq:varto0}. Note that all steps above in conjunction with Lemma \ref{Galton_kernel} can be adapted to prove uniform integrability of $\xi^2_m$, from which it follows that 
	\[
	\lim_{m \to \infty} \expec [\xi^2_m] 
	= \expec \left[ S^{-2\rho}_3 \right] \expec\left[ \left( \sum_{j=1}^{3}\dot{K}_{(j)}\left( \{h_{\gamma}\left( Y_{3-j:2}\right) \}_{j=1}^3\right)H_{\gamma, \rho}\left(Y_{3-j:2} \right) \right)^2 \right]
	=:\phi.
	\]
	
	By classical theory on U-statistics \citep[Chapter 5.2]{serfling2009approximation}, we have
	\begin{align*}
		\sum_{\ell=1}^m p_{n,m}(\ell) \Psi_{m,\ell} \leq \frac{m}{n}\Psi_{m, m},
	\end{align*}
	where $\Psi_{m,m}$ is the variance of $\xi_m$. The relation \eqref{eq:varto0} follows immediately from $\expec[\xi_m] \to B_{K}(\rho, \gamma)$ and $\expec[\xi^2_m] \to \phi$ as $m \to \infty$.
\end{proof}

\begin{lemma} \label{Galton_kernel}
	Recall  $\tilde{h}(Y_{i:2})= h(Y_{i:2}; Y^{*}_{m-2:m} )-h_{\gamma}\left(Y_{i:2} \right) $ and $\hat{Z}_m=(1-\hat{a})\{h(Y_{3-j:2}; Y^{*}_{m-2:m}) \}_{j=1}^{3} +\hat{a} \{h_\gamma(Y_{3-j:2}) \}_{j=1}^{3}$ for some $0 < \hat{a} < 1$, where $\hat{a}$ is a random variable. Recall that $\{ \hat{Z}_{3-j:2}\}_{j=1}^3:=\hat{Z}_m$ where $\hat{Z}_{2:2} \geq \hat{Z}_{1:2} \geq \hat{Z}_{0:2}$ and ${D_m}=\{ Y^*_{m-q+1:m} > t_0  \}$. For sufficiently large $t_0>1$, we have
	\[
	\sup_{m \ge 3} \sum_{j=1}^{2}\left| \frac{\tilde{h}(Y_{3-j:2})-\tilde{h}(Y_{2-j:2})    } {\left(\hat{Z}_{3-j:2}-\hat{Z}_{2-j:2} \right)A(Y^*_{m-2:m})}\right|\mathbb{I}_{D_m} \leq B
	\]
	for a random variable $B$ satisfying $\expec[ |B|^r ] < \infty$ for any $r>0$.
\end{lemma}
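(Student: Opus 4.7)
The plan is to show that on $D_m$, for $t_0$ sufficiently large, each summand is uniformly bounded by a deterministic constant, so $B$ can be taken deterministic and hence trivially has all moments. Write $t = Y^*_{m-2:m}$ and note that since the $Y_{i:2}$ are Pareto(1) order statistics, all evaluation points lie in $[1,\infty)$. From the definition of $h(\,\cdot\,;t)$, I would compute $\partial_y h(y;t) = U'(ty)/U'(t)$ and $h_\gamma'(y) = y^{\gamma-1}$. Defining $R(u) := U'(tu)\, u^{1-\gamma}/U'(t)$, so that $U'(tu)/U'(t) = u^{\gamma-1} R(u)$, I would rewrite, for $1 \le x < y$,
\begin{align*}
\tilde h(y) - \tilde h(x) &= \int_x^y u^{\gamma-1} (R(u)-1) \, du, \\
\hat Z(y) - \hat Z(x) &= \int_x^y u^{\gamma-1} \left[ (1-\hat a) R(u) + \hat a \right] du \\
 &= [h_\gamma(y) - h_\gamma(x)] + (1-\hat a) [\tilde h(y) - \tilde h(x)].
\end{align*}

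The key step is a uniform control of $(R(u)-1)/A(t)$. Using Condition~\ref{derivative_tail_quantile}, a direct calculation gives $\ln R(u) = \int_1^u A(tr)/r\, dr$. I would apply Potter's inequality (Lemma~\ref{Potter}) to the regularly varying function $|A|$ with index $\rho < 0$: for any $\delta > 0$ small enough that $\rho+\delta < 0$, there exists $t_0$ with $|A(tr)/A(t)| \le 2\, r^{\rho+\delta}$ whenever $t \ge t_0$ and $r \ge 1$. Integrating,
\[
\left| \frac{\ln R(u)}{A(t)} \right| \le 2 \int_1^u r^{\rho+\delta - 1} \, dr \le \frac{2}{|\rho+\delta|} =: c,
\]
uniformly in $u \ge 1$ and $t \ge t_0$. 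Hence $|\ln R(u)| \le c\,|A(t)|$, and a first-order Taylor expansion of the exponential (after enlarging $t_0$ to make $c\,|A(t_0)|$ small) yields $|R(u) - 1|/|A(t)| \le C_1$ uniformly in $u \ge 1$ and $t \ge t_0$, for some constant $C_1$.

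Plugging this uniform bound into the integral representations gives first
\[
|\tilde h(y) - \tilde h(x)| \le C_1 |A(t)| [h_\gamma(y) - h_\gamma(x)].
\]
Enlarging $t_0$ once more so that $C_1 |A(t_0)| \le 1/2$, the identity relating $\hat Z(y)-\hat Z(x)$ and $\tilde h(y)-\tilde h(x)$ above, together with $\hat a \in (0,1)$, forces $\hat Z(y)-\hat Z(x) \ge [h_\gamma(y) - h_\gamma(x)]/2 > 0$. Combining the two estimates,
\[
\left| \frac{\tilde h(y) - \tilde h(x)}{A(t)\,[\hat Z(y)-\hat Z(x)]} \right| \le 2 C_1
\]
on $D_m$. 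Summing over $j = 1, 2$ yields the lemma with the deterministic constant $B := 4 C_1$, which trivially satisfies $\expec[|B|^r] < \infty$ for every $r > 0$.

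The main obstacle to be navigated is the uniformity of the Potter-type estimate over the unbounded range $u \in [1, \infty)$: this is precisely why we must choose $\delta$ small enough that $\rho + \delta < 0$, so that $\int_1^\infty r^{\rho+\delta-1}\, dr$ converges and the bound on $|\ln R(u)/A(t)|$ is truly uniform in $u$; any looser choice of $\delta$ would only give a bound growing with $u$ and would force $B$ to depend on (moments of) the order statistics.
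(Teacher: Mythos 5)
Your proof is correct and takes a genuinely cleaner route than the paper's. You rewrite both $\tilde h(y)-\tilde h(x)$ and $\hat Z(y)-\hat Z(x)$ through the ratio $R(u)=U'(tu)u^{1-\gamma}/U'(t)$, exploit the identity $\ln R(u)=\int_1^u A(tr)\,r^{-1}\,\diff r$, and apply Potter's inequality \emph{once}, to $|A|$ alone, using $\rho<0$ to make the integral $\int_1^\infty r^{\rho+\delta-1}\diff r$ converge; this gives the uniform estimate $|R(u)-1|\le C_1|A(t)|$ for all $u\ge1$, $t\ge t_0$. That single estimate then controls the numerator (yielding $|\tilde h(y)-\tilde h(x)|\le C_1|A(t)|\,[h_\gamma(y)-h_\gamma(x)]$) and, through the identity $\hat Z(y)-\hat Z(x)=[h_\gamma(y)-h_\gamma(x)]+(1-\hat a)[\tilde h(y)-\tilde h(x)]$, also gives the lower bound on the denominator after shrinking $\sup_{t\ge t_0}|A(t)|\le 1/(2C_1)$, which is possible since $A(t)\to0$. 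The paper, by contrast, invokes Potter's bounds twice — once on $|A|U'$ for the numerator, and separately on $U'$ for the denominator — and the second application forces the random factor $Y_{2:2}^{\delta_1}$ into the lower bound on $\hat Z_{2:2}-\hat Z_{1:2}$, so the paper's $B$ is random and, since $Y_{2:2}$ has only finitely many moments, $\delta_1$ must be taken smaller than $1/(2r)$ for each target moment order $r$. Your $B$ is a deterministic constant, which trivially has all moments and sidesteps the $r$-dependence in the paper's choice of $\delta_1$. The only point worth stating more carefully is that "enlarging $t_0$ so that $C_1|A(t_0)|\le 1/2$" should be "so that $\sup_{t\ge t_0}|A(t)|\le 1/(2C_1)$", which is available because $\lim_{t\to\infty}A(t)=0$ under Condition~\ref{derivative_tail_quantile}; $|A|$ need not be monotone.
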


\begin{proof}
	For brevity, we only deal with the first term in the summation:
	\[
	\left| \frac{\tilde{h}(Y_{2:2})-\tilde{h}(Y_{1:2})    } {(\hat{Z}_{2:2}-\hat{Z}_{1:2}) \, A(Y^*_{m-2:m})}\right|.
	\]
	The second term can be handled using the same inequalities even though $Y_{0:2}=1$.
	
	Recall the function $A(t)$ from Condition~\ref{derivative_tail_quantile} as
	\[
	A(t)=\frac{t \, U''(t)}{U'(t)}-\gamma+1.
	\]
	Following the proof of Theorem~2.3.12 in \cite{deHaanFerreira2006extreme}, we can write
	\[
	\frac{\tilde{h}(Y_{2:2})-\tilde{h}(Y_{1:2})   }{A(Y^*_{m-2:m})}= \int_{Y_{1:2}}^{Y_{2:2}} s^{\gamma-1} \int_{1}^{s} \frac{A(Y^*_{m-2:m}u)}{A(Y^*_{m-2:m})}\frac{U'(Y^*_{m-2:m}u)}{U'(Y^*_{m-2:m})} u^{-\gamma}\, \diff u \, \diff s.
	\]
	By regular variation of the absolute value of $A$ and $U'$ with indices $\rho$ and $\gamma-1$ respectively and using Potter's bounds in Lemma \ref{Potter}, we get that for any $\delta_2>0$ there exists a $t_0(\delta_2)$ such that if $Y^*_{m-2:m}>t_0(\delta_2)$,
	\begin{align*}
		\lefteqn{\int_{Y_{1:2}}^{Y_{2:2}} s^{\gamma-1} \int_{1}^{s} \frac{A(Y^*_{m-2:m}u)}{A(Y^*_{m-2:m})}\frac{U'(Y^*_{m-2:m}u)}{U'(Y^*_{m-2:m})} u^{-\gamma} \, \diff u \, \diff s}
		\\
		&\leq \int_{Y_{1:2}}^{Y_{2:2}} s^{\gamma-1} \int_{1}^{s} (1+\delta_2) u^{\rho-1+\delta_2} \, \diff u \, \diff s\\
		&= \frac{1+\delta_2}{\rho+\delta_2} \left( h_{\gamma+\rho+\delta_2}\left(Y_{2:2}\right)- h_{\gamma+\rho+\delta_2}\left(Y_{1:2}\right)  \right) - \frac{1+\delta_2}{\rho+\delta_2}\left(h_\gamma(Y_{2:2})-h_\gamma(Y_{1:2})\right),
	\end{align*}
	provided $\rho+\delta_2 \neq 0$, and similarly,
	\begin{multline*}
		\int_{Y_{1:2}}^{Y_{2:2}} s^{\gamma-1} \int_{1}^{s} \frac{A(Y^*_{m-2:m}u)}{A(Y^*_{m-2:m})}\frac{U'(Y^*_{m-2:m}u)}{U'(Y^*_{m-2:m})} u^{-\gamma} \, \diff u \, \diff s \\
		\geq \frac{1-\delta_2}{\rho-\delta_2} 
		\left( h_{\gamma+\rho-\delta_2}\left(Y_{2:2}\right)
		- h_{\gamma+\rho-\delta_2}\left(Y_{1:2}\right)  \right) 
		- \frac{1-\delta_2}{\rho-\delta_2}
		\left(h_\gamma(Y_{2:2})-h_\gamma(Y_{1:2})\right).
	\end{multline*} 
	Accordingly, it suffices to prove that, for any $r>0$ and $\gamma \in \reals$, the term
	\begin{align*}
		&\left| \frac{h_{\gamma}(Y_{2:2})- h_{\gamma}(Y_{1:2})}{\hat{Z}_{2:2}-\hat{Z}_{1:2}} \right|^r 
	\end{align*}
	can be bounded by an integrable function not depending on $m$.

	Note that
	\begin{multline*}
		\hat{Z}_{2:2}-\hat{Z}_{1:2}  =(1-\hat{a})\left(  h(Y_{2:2}; Y^{*}_{m-2:m}) -h(Y_{1:2}; Y^{*}_{m-2:m})  \right) \\ +\hat{a} \left( h_\gamma(Y_{2:2})   - h_\gamma(Y_{1:2}) \right).
	\end{multline*}
	For any $\delta_1>0$, there exists $t_0(\gamma, \delta_1)>1$ such that, if $Y^*_{m-2:m}>t_0(\gamma, \delta_1)$, following the Potter's bounds in Lemma~\ref{Potter}, we have that
	\begin{multline*}
		h(Y_{2:2}; Y^{*}_{m-2:m}) -h(Y_{1:2}; Y^{*}_{m-2:m})
		= \int_{Y_{1:2}}^{Y_{2:2}}\frac{U'(t Y^*_{m-2:m})}{U'(Y^*_{m-2:m})} \, \diff t \\
		\geq (1-\delta_1) \int_{Y_{1:2}}^{Y_{2:2}}t^{\gamma-1 -\delta_1} \, \diff t
		= (1-\delta_1) \left( h_{\gamma-\delta_1}(Y_{2:2}) - h_{\gamma-\delta_1}(Y_{1:2})\right),
	\end{multline*}
	provided $\gamma-\delta_1 \neq 0$. Define $\tilde{Y} = Y_{2:2} / Y_{1:2}$. Recalling that $0<\hat{a}<1$, we get that
	\begin{align*}
		&\hat{Z}_{2:2}-\hat{Z}_{1:2} \\
		\geq&  (1-\hat{a}) (1-\delta_1)\left( h_{\gamma-\delta_1}(Y_{2:2}) - h_{\gamma-\delta_1}(Y_{1:2})\right)+\hat{a} \left(h_\gamma(Y_{2:2})-h_\gamma(Y_{1:2})  \right)\\
		\geq&  (1-\hat{a}) (1-\delta_1)\left( h_{\gamma-\delta_1}(Y_{2:2}) - h_{\gamma-\delta_1}(Y_{1:2})\right)+\hat{a}(1-\delta_1) \left(h_\gamma(Y_{2:2})-h_\gamma(Y_{1:2})  \right)\\
		\geq& (1-\delta_1)Y_{2:2}^{-\delta_1}\left( h_{\gamma}(Y_{2:2}) - h_{\gamma}(Y_{1:2})\right),
	\end{align*}
	where the last inequality follows from the fact that for $0 < x < y < \infty$, for $\gamma \in \reals$ and for $0 \le \delta < \infty$, we  have
	\begin{align*}
		y^\delta \left( h_{\gamma-\delta}(y) - h_{\gamma-\delta}(x) \right)
		&= y^\delta \int_x^y t^{\gamma-\delta-1} \, \diff t = \int_x^y t^{\gamma - 1} (y/t)^\delta \, \diff t \\
		&\ge \int_x^y t^{\gamma - 1} \, \diff t = h_\gamma(y) - h_\gamma(x). 
	\end{align*}
	We conclude with the bound
	\[
	\left| \frac{h_{\gamma}(Y_{2:2})- h_{\gamma}(Y_{1:2})}{\hat{Z}_{2:2}-\hat{Z}_{1:2}} \right| \leq \frac{ h_{\gamma}(Y_{2:2})-h_{\gamma}(Y_{1:2})}{ (1-\delta_1)Y_{2:2}^{-\delta_1}\left( h_{\gamma}(Y_{2:2}) - h_{\gamma}(Y_{1:2})\right)}\leq \frac{ Y_{2:2}^{\delta_1}}{(1-\delta_1)},
	\]
	from which the lemma follows by choosing $\delta_1<(2 r)^{-1}$.
\end{proof}

\section{Proofs for Section~\ref{Application}} 
\label{proof:application}
To show Proposition~\ref{unbiased_estimator}, we use known formulas for the expectation of exponentiated Generalized Pareto random variables and their order statistics in terms of the digamma function $\psi$.

\begin{lemma}
	\label{moment_EXP_GP}
	Let $\gamma \in \reals$ and let $Z_1, Z_2$ be two independent $\GP(\gamma)$ random variables, with order statistics $Z_{1:2} \le Z_{2:2}$.
	We have
	\begin{align*}
		\expec[ \ln Z_1 ]
		&=
		\begin{cases}
			\ln(1/\gamma)+\psi(1) - \psi(1/\gamma) \qquad &\text{ if } \gamma>0 \\
			\ln(-1/\gamma) +\psi(1) - \psi(1-1/\gamma) \qquad &\text{ if } \gamma<0 \\
			\psi(1) \qquad &\text{ if } \gamma=0 ,
		\end{cases}
		\\
		\expec[ \ln Z_{2:2} ]
		&=
		\begin{cases}
			\ln(1/\gamma)+\psi(1) +\psi(2/\gamma) - 2\psi(1/\gamma) \qquad &\text{ if } \gamma>0  \\
			\ln(-1/\gamma) +\psi(1) +\psi(1-2/\gamma)- 2\psi(1-1/\gamma) \qquad &\text{ if } \gamma<0 \\
			\psi(1)+\ln(2) \qquad &\text{ if } \gamma=0 ,
		\end{cases}
		\\
		\expec[ \ln Z_{1:2} ]
		&=
		\begin{cases}
			\ln(1/\gamma)+\psi(1) -\psi(2/\gamma) \qquad &\text{ if } \gamma>0  \\
			\ln(-1/\gamma) +\psi(1) -\psi(1-2/\gamma) \qquad &\text{ if } \gamma<0 \\
			\psi(1)-\ln(2)   \qquad &\text{ if } \gamma=0.
		\end{cases}
	\end{align*} 
	As a consequence, for all $\gamma \in \reals$, we have
	\begin{equation}
		\label{eq:u12}
		\expec \left[ \KP(Z_{2:2}, Z_{1:2}, 0) \right] = \gamma.
	\end{equation}
\end{lemma}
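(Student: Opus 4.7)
The plan is to derive the three marginal expectations individually for each sign of $\gamma$, use the elementary identity $\ln Z_{1:2} + \ln Z_{2:2} = \ln Z_1 + \ln Z_2$ to avoid a separate calculation for the maximum, and then deduce \eqref{eq:u12} from the distributional representation of the spacing furnished by Lemma~\ref{lem:GPos}.

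For the marginal $\expec[\ln Z_1]$ with $Z_1 \sim \GP(\gamma)$, I would split by the sign of $\gamma$. For $\gamma > 0$ the variable $W = 1 + \gamma Z_1$ is Pareto with index $1/\gamma$, so $Z_1 = (W-1)/\gamma$ and
\[
\expec[\ln Z_1] = -\ln \gamma + \frac{1}{\gamma} \int_0^\infty \ln(u)(1+u)^{-1/\gamma - 1} \diff u.
\]
The remaining integral equals $\gamma[\psi(1) - \psi(1/\gamma)]$ via the identity $\int_0^\infty \ln(u)(1+u)^{-\alpha-1} \diff u = \alpha^{-1}[\psi(1) - \psi(\alpha)]$, which I would obtain by substituting $w = u/(1+u)$ and differentiating the Beta integral at $a = 1$. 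For $\gamma < 0$, $V = 1 + \gamma Z_1$ follows a $\operatorname{Beta}(-1/\gamma, 1)$ law, and the analogous identity $\int_0^1 \ln(1-v) v^{\alpha-1} \diff v = \alpha^{-1}[\psi(1) - \psi(\alpha+1)]$ (differentiating $B(\alpha, b)$ in $b$ at $b = 1$) delivers the stated formula. The case $\gamma = 0$ is immediate from $\int_0^\infty \ln(z) e^{-z} \diff z = \psi(1)$.

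For $\expec[\ln Z_{1:2}]$, the key observation is that $1 + \gamma Z_{1:2}$ is Pareto with index $2/\gamma$ when $\gamma > 0$ (because taking the minimum of two i.i.d.\ Paretos doubles the tail index) and $\operatorname{Beta}(-2/\gamma, 1)$ when $\gamma < 0$. The very same computation as above, with $1/|\gamma|$ replaced by $2/|\gamma|$, produces the claimed digamma expressions. The formula for $\expec[\ln Z_{2:2}]$ then follows by subtraction from $\expec[\ln Z_{1:2}] + \expec[\ln Z_{2:2}] = 2 \expec[\ln Z_1]$, avoiding any fresh integral computation.

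For the consequence \eqref{eq:u12}, expand
\[
\KP(Z_{2:2}, Z_{1:2}, 0) = 2 \ln(Z_{2:2} - Z_{1:2}) - \ln Z_{1:2} - \ln Z_{2:2},
\]
and apply Lemma~\ref{lem:GPos} with $m = q = 2$ to get $Z_{2:2} - Z_{1:2} \eqd (1 + \gamma Z_{1:2}) \tilde Z$ with $\tilde Z$ an independent $\GP(\gamma)$ variable. Writing $Z_{1:2} \eqd h_\gamma(Y_{1:2})$ with $Y_{1:2}$ the minimum of two standard Paretos gives $1 + \gamma Z_{1:2} \eqd Y_{1:2}^\gamma$, and since $\ln Y_{1:2}$ is exponential with rate $2$, one obtains $\expec[\ln(1 + \gamma Z_{1:2})] = \gamma/2$ uniformly in $\gamma$. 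Hence $\expec[\ln(Z_{2:2} - Z_{1:2})] = \gamma/2 + \expec[\ln Z_1]$, which combined with $\expec[\ln Z_{1:2}] + \expec[\ln Z_{2:2}] = 2 \expec[\ln Z_1]$ yields \eqref{eq:u12}. The only real chore is the bookkeeping across the three sign regimes in the marginal computation; the closing identity \eqref{eq:u12} itself, pleasingly, requires no case split.
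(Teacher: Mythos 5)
Your proposal is correct. For the three marginal expectations, the paper simply cites the formulas from the literature (Section~2.3 of the reference on exponentiated generalized Pareto distributions), whereas you derive them from scratch via the observation that $1+\gamma Z_1$ is Pareto with index $1/\gamma$ (for $\gamma>0$) or $\operatorname{Beta}(-1/\gamma,1)$ (for $\gamma<0$), and likewise for $1+\gamma Z_{1:2}$ with $1/\gamma$ replaced by $2/\gamma$; the Beta-differentiation identities you invoke are standard and correct, and the use of $\ln Z_{1:2}+\ln Z_{2:2}=\ln Z_1+\ln Z_2$ to avoid a third integral is a nice economy. For the consequence \eqref{eq:u12}, the paper works with the ratios $u_1=\expec[\ln((Z_{2:2}-Z_{1:2})/Z_{1:2})]$ and $u_2=\expec[\ln(Z_{2:2}/Z_{1:2})]$, writing $Z_{2:2}-Z_{1:2}=Y_{1:2}^\gamma h_\gamma(Y_{2:2}/Y_{1:2})$ and then carrying the explicit digamma expressions through a $2u_1-u_2$ cancellation; your route via Lemma~\ref{lem:GPos} ($Z_{2:2}-Z_{1:2}\eqd(1+\gamma Z_{1:2})\tilde Z$ with $\tilde Z$ independent) is the same distributional fact, but you exploit $\expec[\ln(1+\gamma Z_{1:2})]=\gamma/2$ together with the symmetric-function identity to make the digamma terms cancel \emph{structurally}, so that \eqref{eq:u12} drops out without any case split on the sign of $\gamma$. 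That is a genuine simplification of the second half: in your version the explicit marginal formulas are not actually used in proving \eqref{eq:u12} beyond ensuring the relevant expectations are finite, which is a cleaner reading of why the answer $\gamma$ holds uniformly.
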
 

\begin{proof}
	The formulas for $\expec[\ln Z_1]$, $\expec[\ln Z_{1:2}]$ and $\expec[\ln Z_{2:2}]$ follow directly from the moments of exponentiated GP random variables and their order statistics, see \cite[Section~2.3]{lee2019exponentiated}.
	
	Let $Y_1, Y_2$ be two independent standard Pareto random variables and let $Y_{1:2} \leq Y_{2:2}$ denote their order statistics as. 
	Recall that we can represent the $\GP(\gamma)$ random variables $Z_1, Z_2$ and their corresponding order statistics $Z_{1:2} \leq Z_{2:2}$ as $Z_i=h_\gamma(Y_i)$, $Z_{1:2}=h_{\gamma}(Y_{1:2})$ and $Z_{2:2}=h_{\gamma}(Y_{1:2})$. But then
	\[
	Z_{2:2} - Z_{1:2} 
	= h_\gamma(Y_{2:2}) - h_\gamma(Y_{1:2})
	= Y_{1:2}^\gamma h_\gamma(Y_{2:2} / Y_{1:2}).
	\]
	Since $Y_{2:2} / Y_{1:2}$ is equal in distribution to $Y_1$, we find
	\begin{align*}
		u_1(\gamma)
		&:= \expec \left[ 
			\ln \left( \frac{Z_{2:2} - Z_{1:2}}{Z_{1:2}} \right) 
		\right] \\
		&= \expec[\ln Y_{1:2}^\gamma] + \expec[\ln Z_1] - \expec[\ln Z_{1:2} ] \\
		&=
		\begin{cases}
			\gamma/2 -\psi(1/\gamma)  + \psi(2/\gamma) &\text{ if } \gamma>0 \qquad \\
			\gamma/2- \psi(1-1/\gamma)+\psi(1-2/\gamma) &\text{ if } \gamma<0 \qquad\\
			\ln(2) &\text{ if } \gamma=0. \qquad
		\end{cases}
	\end{align*}
	More straightforwardly, we have
	\begin{align*}
		u_2(\gamma)
		&:= \expec \left[ \ln \left( \frac{Z_{2:2}}{Z_{1:2}} \right) \right] 
		= \expec [ \ln Z_{2:2} ] - \expec[ \ln Z_{1:2} ] \\
		&=
		\begin{cases}
			2 \left(\psi(2/\gamma)  - \psi(1/\gamma)\right) &\text{ if } \gamma>0 \qquad \\
			2 \left( \psi(1-2/\gamma)-\psi(1-1/\gamma) \right) &\text{ if } \gamma<0 \qquad\\
			2\ln(2) &\text{ if } \gamma=0. \qquad
		\end{cases}
	\end{align*}
	It follows that $\expec[ \KP(Z_{2:2}, Z_{2:1}, 0)] = 2 \, u_1(\gamma) - u_2(\gamma) = \gamma$, as required.
\end{proof}

\begin{proof}[Proof of Proposition~\ref{unbiased_estimator}]
	We have
	\begin{align*}
		\expec \left[ \KPm(Z_1,\ldots,Z_m) \right]
		&= \expec \left[ \KP(Z_{m:m}, Z_{m-1:m}, Z_{m-2:m}) \right] \\
		&= \expec \left[ \KP(Z_{2:2}, Z_{1:2}, 0) \right] 
		= \gamma,
	\end{align*}
	where we used Lemma~\ref{lem:GPKm} and the location-scale invariance of $\KP$ in the second step and Lemma~\ref{moment_EXP_GP} in the last step.
\end{proof}

\begin{proof}[Proof of Theorem~\ref{Prop_kernel_density}]
	We apply Theorem~\ref{Prop:bias}. To this end, we need to show that Conditions~\ref{Condition_kernel_scale-loc_integrability}, \ref{degree_sequence}, \ref{derivative_tail_quantile}, \ref{balance_bias}, \ref{kernel_derivatives} and \ref{kernel_moment} hold. Conditions \ref{degree_sequence}, \ref{derivative_tail_quantile}, and \ref{balance_bias} hold by assumption.
	
	One can check that the kernel $\KP$ is differentiable, has continuous first-order partial derivatives and is of the form \eqref{eq:Kg} for an auxiliary function $g$ that satisfies $|g'(x)|=|2-e^x/(e^x+1)| \leq 2$. Therefore, Condition~\ref{kernel_derivatives} holds too.	
	It remains to show that $\KPm$ satisfies Conditions~\ref{Condition_kernel_scale-loc_integrability} and~\ref{kernel_moment}.
	
	Condition~\ref{Condition_kernel_scale-loc_integrability} can be verified as follows:
	for any $p>0$, with the same notation as in the proof of Lemma~\ref{moment_EXP_GP},
	\begin{multline*}
		\expec \left[\left| \KPm(Z_1,\ldots,Z_m) \right|^p \right]=\mathbb{E} \left[ \left| 2\ln\left( \frac{Z_{2:2}-Z_{1:2}}{Z_{1:2}} \right) -\ln \left( \frac{Z_{2:2}}{Z_{1:2}}  \right) \right|^p \right]\\
		= \expec \left[ \left| 2 \ln \left[ Y_{1:2}^\gamma\right] +2\ln \left[ h_\gamma(Y_{2:2}/Y_{1:2}) \right] -\ln \left[h_\gamma(Y_{1:2})   \right] -\ln\left[h_{\gamma}(Y_{2:2}) \right]  \right|^p \right]< \infty,
	\end{multline*}
	because exponential random variables and exponentiated GP random variables (and its order statistics) have finite moments of all orders \cite[Corollary~3]{lee2019exponentiated}.  
	
	To show Condition~\ref{kernel_moment}, we rely on Condition~\ref{cond:f}. Note that $\KPm$ is a linear combination of the logarithms of the spacings $X_{m:m}-X_{m-1:m}$, $X_{m:m}-X_{m-2:m}$ and $X_{m-1:m}-X_{m-2:m}$. We have 
	\begin{multline*} 
		-\ln (X_{m:m}-X_{m-2:m}) \1_{\{  X_{m:m}-X_{m-2:m}< 1 \}} \\
		\leq -\ln(X_{m:m}-X_{m-1:m}) \1_{\{  X_{m:m}-X_{m-1:m}< 1  \}} 
	\end{multline*}
	and for $ 0 \leq j<i \leq 2$,
	\begin{multline*}
		\ln (X_{m-j:m}-X_{m-i:m}) \1_{\{  X_{m-j:m}-X_{m-i:m}> 1 \}} \\
		\leq \ln (X_{m:m}-X_{m-2:m}) \1_{\{  X_{m:m}-X_{m-2:m}> 1 \}} .
	\end{multline*}
	Consequently, to verify Condition~\ref{kernel_moment} we only have to show that
	\begin{align} 
		\label{small_gap} 
		\expec \left[
		\left(-\ln(X_{m-j:m}-X_{m-j-1:m})\right)^p 
		\1_{\{  X_{m-j:m}-X_{m-j-1:m}< 1  \}} 
		\right] = \Oh(m^a)
	\end{align}
	for $j \in \{0,1\}$, and 
	\begin{align} 
		\label{large_gap} 
		\expec \left[ 
		(\ln(X_{m:m}-X_{m-2:m}))^p
		\1_{\{  X_{m:m}-X_{m-2:m}> 1 \}} 
		\right] = \Oh(m^a) 
	\end{align} 
	for some $p>2$ and $a>0$. 
	
	First,  for the expectation in~\eqref{small_gap}, note that the distribution function $\tilde{F}_j$ of the spacing $X_{m-j:m}-X_{m-j-1:m}$ for $j \in \{0, 1\}$ is given by
	\[
	\tilde{F}_{j}(r)=1-\int_{-\infty}^{+\infty}(m-j-1) \binom{m}{m-j-1}f(x)F^{m-j-2}(x)\left(1-F(x+r)\right)^{j+1}\diff x.
	\]
	Moreover, note that $\left|(1-F(x))^2-(1-F(a))^2\right|\leq 4 \left|F(x)-F(a) \right|$ for all $x, a \in \mathcal{E}$ and $\tilde{F}_{j}(0)=0$. We get, for any $0<x_0<1$ and $r_0=(-\ln x_0)^p$, that
	\begin{align*}
		&\expec\left[(-\ln(X_{m-j:m}-X_{m-j-1:m}))^p\1_{\{  X_{m-j:m}-X_{m-j-1:m}< x_0  \}} \right]\\
		&=\int_{r_0}^{\infty} \prob(X_{m-j:m}-X_{m-j-1:m}<e^{-r^{1/p}}) \, \diff r\\
		&=\int_{r_0}^{\infty} \left(\tilde{F}_j( e^{-r^{1/p}}) -\tilde{F}_j\left( 0 \right) \right)\diff r\\
		&\leq (m-j-1) \binom{m}{m-j-1} \\
		&\:\:\:\: {} \cdot \int_{r_0}^{\infty}  \int_{-\infty}^{+\infty} f(x) {F}^{m-j-2}(x)\left|\bigl(1-F(x+e^{-r^{1/p}})\bigr)^{j+1}-\left(1-F(x)\right)^{j+1}  \right| \diff x \, \diff r\\
		& \leq  c \binom{m}{m-j-1}  \int_{r_0}^{\infty} \, r^{-(1+\epsilon)} \diff r,
	\end{align*}
	for some constant $c>0$.
	Here the last step follows from~\eqref{conditie:CDF} in Condition~\ref{cond:f}. With straightforward calculation, we get that as $m\to\infty$,
	\[
		\expec\left[(-\ln(X_{m-j:m}-X_{m-j-1:m}))^p\1_{\{  X_{m-j:m}-X_{m-j-1:m}< 1  \}} \right]=\Oh(m^2).
	\]
	
	Secondly, for the expectation in \eqref{large_gap}, consider the following five events:
	\begin{align*}
		A   &:= \{X_{m:m}-X_{m-2:m}>1\},\\
		A_1 &:= \{X_{m:m}>1,X_{m-2:m}<-1\},\\
		A_2 &:= \{X_{m:m}\leq 1,X_{m-2:m}<-1\},\\
		A_3 &:= \{X_{m:m}>1,X_{m-2:m}\geq -1\},\\
		A_4 &:= \{X_{m:m}\leq 1,X_{m-2:m}\geq -1\}.
	\end{align*}
	The following inequalities can be verified in a straightforward way:
	\begin{align*}
		\left(\ln\left(X_{m:m}-X_{m-2:m}\right) \right)^p\1_{A\cap A_1}&\leq 2^p\left[\left(\ln\left(X_{m:m}\right) \right)^p+\left(\ln\left(-X_{m-2:m}\right) \right)^p+1\right]\1_{A\cap A_1},\\
		\left(\ln\left(X_{m:m}-X_{m-2:m}\right) \right)^p\1_{A\cap A_2}&\leq 2^p\left[\left(\ln\left(-X_{m-2:m}\right) \right)^p+1\right]\1_{A\cap A_2},\\
		\left(\ln\left(X_{m:m}-X_{m-2:m}\right) \right)^p\1_{A\cap A_3}&\leq 2^p\left[\left(\ln\left(X_{m:m}\right) \right)^p+1\right]\1_{A\cap A_3},\\
		\left(\ln\left(X_{m:m}-X_{m-2:m}\right) \right)^p\1_{A\cap A_4}&\leq \1_{A\cap A_4}.
	\end{align*}
	Therefore, we only need to show that the following two expectations
	\[
	\expec \left[\left(\ln\left(X_{m:m}\right) \right)^p\1_{\{  X_{m:m}> 1  \}} \right] \text{\ \ and\ \ }\expec \left[\left(\ln\left(-X_{m-2:m}\right) \right)^p\1_{\{  X_{m-2:m}< -1  \}}\right],
	\]
	are of the order $\Oh(m^a)$ for some constant $a>0$.
	
	We start from the second expectation concerning $X_{m-2:m}$. There exists $x_0>1$ such that for all $m>3$, the variable $\left(-X_{m-2:m}\right)\1_{\{  X_{m-2:m}< -x_0  \}}$ is stochastically dominated by $\left(-X\right)\1_{\{  X< -x_0  \}}$, where $X$ has distribution $F$. Therefore, in view of Condition~\ref{cond:f}, we have 
	\begin{multline*}
		\expec \left[(\ln(-X_{m-2:m}))^p \1_{\{  X_{m-2:m}< -1  \}}\right] \\
		\leq 
		\expec \left[(\ln(-X))^p\1_{\{X<-x_0\}}\right]
		+(\ln(x_0))^p
		=\Oh(1).
	\end{multline*}
	
	Finally, for the first expectation concerning $X_{m:m}$, choose $0<\theta<1/\gamma$.  Note that there exists $x_0=x_0(p)>1$ such that $(\ln(x))^p$ is a concave function in $x > x_0$. In view of Jensen's inequality, we have
	\[
	\expec \left[ (\ln(X_{m:m}))^p \1_{\{  X_{m:m}> x_0  \}} \right]
	\leq \left(\frac{1}{\theta}\ln\left(\expec\left[X_{m:m}^\theta\1_{\{  X_{m:m}> x_0  \}} \right]\right)\right)^p.
	\]
	For $\theta<1/\gamma$, Theorem~5.3.2 in \citep{deHaanFerreira2006extreme} yields that as $m\to\infty$, 
	\[
	\expec\left[X_{m:m}^\theta\1_{\{  X_{m:m}> 1  \}} \right]
	= \left(U(m)\right)^{\theta} = \oh(m),
	\]
	which is sufficient to bound the first expectation by $\Oh(m^a)$ with any $a>0$. Combining the two expectations completes the proof of \eqref{large_gap} and thus that of Condition~\ref{kernel_moment}.
\end{proof}

\begin{proof}[Proof of Lemma~\ref{weights_estimator}]
	Put $S_{ij} = \ln(X_{n-i+1:n} - X_{n-j+1:n})$ for integer $1 \le i < j \le n$. For $I \subset [n]$ with $|I| = m$, both $K_{m,1}(X_I)$ and $K_{m,2}(X_I)$ are linear combinations of such log-spacings $S_{ij}$. For a given pair $(i, j)$ of indices, the number of such subsets $I$ in which $S_{ij}$ appears can be counted:
	\begin{itemize}
		\item There are $\binom{n-j}{m-2}$ sets $I$ such that $X_{n-i+1:n}$ and $X_{n-j+1:n}$ are respectively the largest and 2nd largest order statistics of $X_I$.
		\item There are $(i-1) \binom{n-j}{m-3}$ sets $I$ such that $X_{n-i+1:n}$ and $X_{n-j+1:n}$ are respectively the 2nd and 3rd largest order statistics of $X_I$.
		\item There are $(j-i-1) \binom{n-j}{m-3}$ sets $I$ such that $X_{n-i+1:n}$ and $X_{n-j+1:n}$ are respectively the largest and 3rd largest order statistics of $X_I$.
	\end{itemize}
	Since $n-j+1$ cannot be lower than $m-2$ (because there need to be at least $m-3$ observations lower than $X_{n-j+1:n}$), we find
	\[
	\UP = 2 U_{n,1}^m - U_{n,2}^m
	\]
	with 
	\begin{align*}
		U_{n,1}^m &= \binom{n}{m}^{-1}
		\sum_{j=2}^{n-m+3} \sum_{i=1}^{j-1}
		\left[\binom{n-j}{m-2} - (i-1) \binom{n-j}{m-3}\right] 
		S_{ij}, \\
		U_{n,2}^m &= \binom{n}{m}^{-1} 
		\sum_{j=2}^{n-m+3} \sum_{i=1}^{j-1}
		\left[ (j-i-1) \binom{n-j}{m-3} - (i-1) \binom{n-j}{m-3} \right]
		S_{ij}.
	\end{align*}
	It follows that
	\[
	\UP = \binom{n}{m}^{-1} 
	\sum_{j=2}^{n-m+3} 
	\left[ 2 \binom{n-j}{m-2} - (j-2) \binom{n-j}{m-3} \right] 
	\sum_{i=1}^{j-1} S_{ij},
	\]
	which can be further simplified to the stated formula.
\end{proof}

\begin{acks}[Acknowledgments]
We are grateful to the constructive comments by the referees and the associate editor, which stimulated us to improve the structure of the paper.
\end{acks}


\bibliographystyle{imsart-number} 

\bibliography{reference_XUstats}       


\end{document}